\documentclass[11pt]{amsart}
\usepackage[margin=1in]{geometry}                
\geometry{letterpaper}                   
\usepackage[parfill]{parskip}    
\usepackage{graphicx}
\usepackage{amssymb}
\usepackage{epstopdf}
\usepackage{latexsym,enumitem,amsmath,amsthm,amsfonts,bbm,hyperref,amsrefs,bm,stmaryrd,tikz-cd}
\DeclareGraphicsRule{.tif}{png}{.png}{`convert #1 `dirname #1`/`basename #1 .tif`.png}

\usepackage{comment}

\newtheorem{theorem}{Theorem}
\newtheorem{corollary}[theorem]{Corollary}
\newtheorem{lemma}[theorem]{Lemma}
\newtheorem{proposition}[theorem]{Proposition}
\newtheorem*{lem:kirkgoebel}{Lemma \ref{lem:kirkgoebel}}

\theoremstyle{definition}
\newtheorem{definition}[theorem]{Definition}
\newtheorem{remark}[theorem]{Remark}

\newtheorem{example}[theorem]{Example}

\makeatletter
\newcommand{\dotminus}{\mathbin{\text{\@dotminus}}}

\newcommand{\@dotminus}{%
  \ooalign{\hidewidth\raise1ex\hbox{.}\hidewidth\cr$\m@th-$\cr}%
}
\makeatother

\newcommand{\Sub}{\text{Sub} \,}
\newcommand{\Subm}{\text{Sub}_{\text{m}} \,}
\newcommand{\Sube}{\text{Sub}_{\epsilon} \,}
\newcommand{\Subeo}{\text{Sub}_{\epsilon_1} \,}
\newcommand{\Subet}{\text{Sub}_{\epsilon_2} \,}
\newcommand{\Subeth}{\text{Sub}_{\epsilon_3} \,}

\newcommand{\Subeef}{\text{Sub}_{(\epsilon \circ \epsilon_f)} \,}
\newcommand{\Subi}{\text{Sub}_{\mathbb{I}} \,}
\newcommand{\Suboi}{\text{Sub}_{1_{\mathbb{I}}} \,}

\newcommand{\TSub}{\text{Sub}_{\mathbbm{2}} \,}

\newcommand{\Twop}{\mathbbm{2}^{\text{op}}}
\newcommand{\Vop}{\mathcal{V}^{\text{op}}}

\newcommand{\Iop}{\mathbb{I}^{\text{op}}}
\newcommand{\Cop}{\mathcal{C}^{\text{op}}}
\newcommand{\pmetu}{\mathbf{pMet_{1}^u}}
\newcommand{\pmetl}{\mathbf{pMet_{1}^L}}
\newcommand{\pmet}{\mathbf{pMet_{1}^{1}}}
\newcommand{\pmetv}{\mathbf{pMet}}
\newcommand{\catsets}{\mathbf{Set}}

\newcommand{\Psh}{\text{PSh}}

\newcommand{\mcc}{\mathcal{C}}

\newcommand{\mcv}{\mathcal{V}}
\newcommand{\mct}{\mathcal{T}}
\newcommand{\mcti}{\mathcal{T}_{\mathbb{I}}}

\newcommand{\mcs}{\mathcal{S}}

\newcommand{\Endo}{\text{End}}

\title[Categorical semantics of metric spaces and continuous logic]{Categorical semantics of metric spaces \\ and continuous logic}
\author{Simon Cho}

\begin{document}
\maketitle

\begin{abstract}
Using the category of metric spaces as a template, we develop a metric analogue of the categorical semantics of classical/intuitionistic logic, and show that the natural notion of \emph{predicate} in this ``continuous semantics'' is equivalent to the a priori separate notion of predicate in \emph{continuous logic}, a logic which is independently well-studied by model theorists and which finds various applications. We show this equivalence by exhibiting the real interval $[0,1]$ in the category of metric spaces as a ``continuous subobject classifier'' giving a correspondence not only between the two notions of predicate, but also between the natural notion of quantification in the continuous semantics and the existing notion of quantification in continuous logic.

Along the way, we formulate what it means for a given category to behave like the category of metric spaces, and afterwards show that any such category supports the aforementioned continuous semantics. As an application, we show that categories of presheaves of metric spaces are examples of such, and in fact even possess continuous subobject classifiers.
\end{abstract}

\section{Introduction}

In categorical semantics of classical or intuitionistic logic, if the interpreting category is sufficiently nice then one has a \emph{subobject classifier}, which is an object $\Omega$ with a monomorphism $\top: 1 \hookrightarrow \Omega$ such that for every object $X$, there is a natural correspondence between maps $f: X \rightarrow \Omega$ and monomorphisms $i: A \hookrightarrow X$, obtained by pulling back $\top$ across $f$ as below:
\begin{equation*}
\begin{tikzcd}
A
\arrow{r}
\arrow{d}[swap]{i}
\arrow[phantom, "\lrcorner", very near start]{dr}
	& 1
	\arrow{d}{\top}\\
X
\arrow{r}{f}
	& \Omega
\end{tikzcd}
\end{equation*}
in particular yielding a well-behaved notion of characteristic function.

Having a subobject classifier is an important part of the categorical perspective on sets as the premier model of classical logic. Now the fact that sets are the ``best'' model of this logic is something that e.g. model theorists feel ``in their gut'' without having to refer to the categorical viewpoint; but this latter viewpoint has been valuable in identifying the underlying structure that allows categories such as (but not limited to) that of sets to interpret classical logic to various degrees, see e.g. \cite{johnstone}, \cite{moemac}, \cite{makrey}.

On the other hand, \emph{continuous logic} is a metric analogue of the usual logic; models of continuous logic are metric spaces, and the equality predicate is now replaced with a distance function. Strikingly, most model theoretic concepts have a well-behaved continuous analogue - see \cite{bbhu} for a detailed reference. Due to its richness, continuous logic is catalyzing an ongoing flurry of activity that is seeing to both the rapid development of the subject itself \cite{clogicdev1}, \cite{clogicdev2} and also applications to e.g. operator algebras \cite{operator1}, \cite{operator2} or proof mining \cite{avigadiovino}, \cite{cho}.

Just as with classical logic and the category of sets, we might ask for a category theoretic reason that metric spaces exhibit such rich logical structure. We examine the category of metric spaces with this question in mind; after some preliminaries in Section \ref{sec:prelim}, we identify relevant features of the category of metric spaces and use them to develop a ``categorical semantics of metric spaces'' in Section \ref{sec:indexedsubs}. One fruit of this approach is that we are able to identify a natural and general notion of ``continuous subobject'', and show that the real (unit) interval in the category of metric spaces - the object of truth values of continuous logic - is a ``continuous subobject classifier'', in the sense that maps into the real interval naturally correspond via pullback to continuous subobjects.

Now in continuous logic, predicates on a space $X$ are just maps from $X$ into the real interval, i.e. they are continuous real-valued features of (points in) the space $X$. The result of Section \ref{sec:indexedsubs} shows that using continuous subobjects of $X$ works equally well; indeed, maps into the real interval are exactly just the characteristic functions of these continuous subobjects. One practical advantage of the continuous subobjects approach is that we are able to make sense of this idea of ``continuous real-valued feature'' in other categories where there might not be an obvious ``object of real truth values''. In particular, we are able to apply the machinery we develop in Section \ref{sec:indexedsubs} \emph{without modification} in Section \ref{sec:presheaves} to the category of \emph{presheaves} of metric spaces, automatically making sense of the notion of ``metric'' and ``continuous subobject'' in this more general setting. Using this we show that the category of presheaves of metric spaces possesses a continuous subobject classifier, and therefore has a well-behaved notion of characteristic function for continuous real-valued features of presheaves of metric spaces.

The author is indebted to Andreas Blass for helpful discussions and suggestions.

\section{Preliminaries}
\label{sec:prelim}

\subsection{The basic model}

We work in the category $\pmetu$ of diameter $\leq 1$ pseudometric spaces (metric spaces whose distances can be $0$ for distinct points) and uniformly continuous maps. A comprehensive reference for such spaces is \cite{cech}. We allow these slightly more general spaces for technical convenience, and we work with uniformly continuous maps because they are the maps in continuous logic. (We will simply call these metric spaces.)

Essentially all that follows works equally well for the categories $\pmetl$ (resp. $\pmet$), which have the same objects as $\pmetu$ but with only Lipschitz (resp. $1$-Lipschitz) maps between them. Moreover, we may even remove the requirement that the diameter of our spaces be bounded, as long as we take the appropriate corresponding quantale to be $(\mathbb{R}_{\geq 0}, +, 0)$ instead of $([0,1], +, 0)$ (refer to Section \ref{sec:indexedsubs} of this paper).

Keeping in mind this flexibility in our choice of variants of metric spaces, let us simply write $\pmetv$ even though we are ostensibly working in $\pmetu$, to emphasize the fact that all of our analysis goes through essentially unchanged for the other variants. That is, strictly speaking we work with the choice $\pmetv = \pmetu$, but whenever $\pmetu$ is not explicitly mentioned then it is fine to choose $\pmetv = \pmetl$ or $\pmetv = \pmet$ instead. In the few places where this is not the case, we will say so explicitly.

\subsection{Categorical semantics}

We briefly recall some of the structure required of a category to support an interpretation of first order logic. A standard reference is e.g. \cite{johnstone}.

Given an object $X \in \mcc$, recall that a \emph{subobject} of $X$ is an appropriate equivalence class of monomorphisms into $X$. These objects form a poset category $\Subm X$ which can be seen as a skeleton of the full subcategory of $\mcc / X$ on the monomorphisms with codomain $X$.

Furthermore, if $\mcc$ has pullbacks, then (since pullbacks preserve monomorphisms and the equivalence relation mentioned above) any morphism $f: X \rightarrow Y$ induces via pullback a functor $f^\ast: \Subm Y \rightarrow \Subm X$. 

On the other hand, we might ask for a well-behaved notion of \emph{image} of morphisms $f: X \rightarrow Y$, so that any such morphism has a (necessarily unique, up to unique isomorphism) factorization $f = me$ where $m$ is the ``smallest'' mono through which $f$ factors, in the sense that for any other factorization $f = m^\prime e^\prime$ with $m^\prime$ monic, we have a commutative diagram
\begin{tikzcd}
X
\arrow{r}{e}
\arrow{d}[swap]{e^\prime}
	& \cdot
	\arrow[dotted]{dl}
	\arrow{d}{m} \\
\cdot
\arrow{r}[swap]{m^\prime}
	& Y
\end{tikzcd}
where the dotted diagonal is necessarily both monic and unique. We say that $\mcc$ \emph{has images} when every morphism $f$ has a factorization as above.

\begin{definition}
\label{def:regcat}
\item

\begin{enumerate}

\item A category $\mcc$ is called \emph{regular} when it has finite limits and images, and pulling back across morphisms preserves images.

\item A regular category $\mcc$ is called \emph{geometric} when for each $X \in \mcc$, $\Subm X$ is a small-complete lattice (i.e. has all small joins and meets) and pulling back across any $f: X \rightarrow Y$ preserves this structure.

\end{enumerate}

\end{definition}

In the definition above we can ignore issues of size (i.e. replace ``small-complete'' with ``complete'') if we assume that the objects of $\Subm X$ form an honest \emph{set}. This is referred to as \emph{wellpoweredness} in the literature, and in practice this is not an overly stringent condition.

The importance of the above structures lies in the fact that they conveniently describe sufficient conditions for a category to interpret first order logic, in which ``predicates of type $X$'' are interpreted as ``subobjects of $X$''.

The problem is that subobjects tend to be poorly behaved in topologically flavored categories like $\pmetv$; there are easy examples of $X \in \pmetv$ with many inequivalent subobjects which nevertheless should represent the same subspace. We may circumvent this problem by passing to a variation of the ``monomorphisms are subobjects'' viewpoint, by considering \emph{regular} monomorphisms as subobjects. The precise modifications required for this are well-known, but for completeness' sake we record them below.

\begin{definition}
\label{def:regmono}

A \emph{regular monomorphism} in a category $\mcc$ is a monomorphism which occurs as the equalizer of some parallel pair of morphisms. 

\end{definition}

In exceedingly nice categories such as $\catsets$, all monomorphisms are regular. The facts below support the idea that regular monomorphisms give the correct notion of subobject in $\pmetv$.

\begin{lemma}

\begin{enumerate}

\item The monomorphisms in $\pmetv$ are precisely the maps which are injective as functions.

\item The epimorphisms in $\pmetv$ are precisely the maps which are surjective as functions.

\end{enumerate}

\end{lemma}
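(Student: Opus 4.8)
The plan is to prove each characterization separately, in both directions, using the fact that the forgetful functor $\pmetv \to \catsets$ is faithful and preserves (and reflects) enough structure to transport these set-theoretic notions.

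\emph{Monomorphisms are the injections.} One direction is essentially formal: if $f: X \to Y$ is injective as a function, then for any parallel pair $g, h: Z \to X$ with $fg = fh$, applying $f^{-1}$ pointwise gives $g = h$, so $f$ is monic. (Here I would be slightly careful that this comparison takes place among the actual morphisms of $\pmetv$, but since $fg = fh$ is an equality of underlying functions and $f$ is injective, $g$ and $h$ agree as functions, hence as morphisms.) For the converse, suppose $f$ is not injective, so there exist distinct points $x_0, x_1 \in X$ with $f(x_0) = f(x_1)$. I would exhibit two distinct maps from a suitable test object that $f$ fails to separate. The natural test object is the one-point space $\ast$; the two maps picking out $x_0$ and $x_1$ are distinct as morphisms but become equal after composing with $f$, so $f$ is not monic. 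One only needs to check these point-selecting maps are genuinely uniformly continuous, which is immediate since their domain is a single point.

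\emph{Epimorphisms are the surjections.} The easy direction here is that surjections are epic: if $f: X \to Y$ is surjective and $g, h: Y \to Z$ satisfy $gf = hf$, then for any $y \in Y$ choose $x$ with $f(x) = y$ and conclude $g(y) = h(y)$, so $g = h$. The harder and more interesting direction is that an epimorphism must be surjective, and this is where I expect the main obstacle to lie. The subtlety is that in topological or metric categories, a map with \emph{dense} image is typically epic even when it is not surjective --- precisely because the target separation is controlled by continuity. So the key point is to verify that in $\pmetv$, an epimorphism is honestly \emph{onto}, not merely dense. I would argue contrapositively: given $f$ whose set-theoretic image omits some point $y_0 \in Y$, I must construct two distinct morphisms $g, h: Y \to Z$ agreeing on the image of $f$.

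The crucial step is choosing $Z$ and the maps so that they can disagree at $y_0$ while agreeing everywhere on $\im f$, \emph{and} remain uniformly continuous despite $y_0$ possibly being a limit of points in $\im f$. The standard cotensor/pushout construction that works in $\mathbf{Top}$ (gluing two copies of $Y$ along the image) must be checked to survive in the uniformly-continuous, diameter-$\le 1$ setting; here the flexibility afforded by pseudometrics and by the specific morphism class (uniformly continuous, Lipschitz, or $1$-Lipschitz) matters, and I would verify the construction respects whichever variant $\pmetv$ denotes. A clean approach is to let $Z$ be $Y$ with the point $y_0$ ``doubled'' --- formally the pushout of $Y \leftarrow \im f \hookrightarrow Y$ (or equivalently the codomain of the cokernel pair of $f$) --- equipped with the quotient pseudometric; the two canonical inclusions $Y \rightrightarrows Z$ then agree on $\im f$ by construction but separate the two copies of $y_0$, witnessing that $f$ is not epic. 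I expect the work to concentrate on confirming that this doubled space carries a legitimate diameter-$\le 1$ pseudometric and that both inclusions are morphisms of the appropriate type, after which the conclusion is immediate.
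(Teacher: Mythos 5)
Your proof is correct, and the monomorphism half coincides with the paper's (maps out of the one-point space detect failure of injectivity). Where you diverge is the epimorphism direction: the paper does not build the cokernel pair of $f$. Instead it takes $Z$ to be the two-point space with both points at distance $0$ (the ``indiscrete'' pseudometric space), and lets $g, h: Y \to Z$ be, respectively, the constant map and the indicator of $\im f$. Since \emph{every} set function into this $Z$ is automatically $1$-Lipschitz, there is nothing to verify about continuity, moduli, or diameter bounds --- the whole subtlety you rightly worry about (dense-image maps being epic in topological settings) evaporates in one line. Your pushout construction does go through: the set-theoretic pushout of $Y \leftarrow \im f \hookrightarrow Y$ with the quotient pseudometric keeps the two copies of a missed point $y_0$ distinct even when $y_0$ lies in the closure of $\im f$ (they simply sit at pseudodistance $0$), the canonical inclusions are $1$-Lipschitz since the quotient pseudometric is dominated by the original, and the diameter bound is inherited. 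So both arguments ultimately lean on the same feature --- pseudometrics permit distinct points at distance $0$ --- but the paper injects that feature directly into a tiny test object, whereas you route it through a colimit whose metric structure you would still need to verify in detail. Your version is more portable (it is essentially the general statement that $f$ is epic iff its cokernel pair is trivial, valid whenever the forgetful functor to $\catsets$ creates the relevant pushout), while the paper's is shorter and self-contained; I would recommend carrying out the quotient-pseudometric verification explicitly or simply substituting the indiscrete two-point space, which gives the same conclusion with no overhead.
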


\begin{proof}

That injective (resp. surjective) implies monic (resp. epi) is obvious. For the other directions, notice that if $m: A \rightarrow X$ fails to be injective, then maps from the one-point space into $A$ witness the failure of $m$ to be monic. Similarly, if $e: E \rightarrow B$ fails to be surjective, then maps from $B$ into the space of two points at distance $0$ witness the failure of $e$ to be epi.

\end{proof}

Let us call two monos $m_1: A_1 \rightarrow X$ and $m_2: A_2 \rightarrow X$ \emph{equivalent} when they are isomorphic as objects of the slice category $\pmetv / X$. In light of the characterization of regular monos in terms of slice categories, we see that if $m_1$ and $m_2$ are equivalent then $m_1$ is regular iff $m_2$ is.

\begin{proposition}
\label{prop:regmonoembeds}

A mono $m: A \rightarrow X$ in $\pmetv$ is regular if and only if it is equivalent to an isometric embedding.

\end{proposition}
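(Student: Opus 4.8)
The plan is to reduce both implications to the explicit description of equalizers in $\pmetv$, which are computed on underlying sets. For any parallel pair $f,g\colon X \to Y$, the equalizer is the subset $E = \{x \in X : f(x) = g(x)\}$ equipped with the subspace pseudometric inherited from $X$, together with the inclusion $E \hookrightarrow X$. One checks this is the equalizer by noting that the inclusion is an isometric embedding, so any $h\colon Z \to X$ with $fh = gh$ factors set-theoretically through $E$, and the factoring map $\bar h\colon Z \to E$ has the same modulus of uniform continuity as $h$ (since $d_E(\bar h z, \bar h z') = d_X(hz, hz')$), hence is again a morphism of $\pmetv$. In particular every equalizer inclusion is \emph{literally} an isometric embedding.

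The forward direction is then immediate: if $m\colon A \to X$ is regular it is the equalizer of some pair $f,g$, hence isomorphic in $\pmetv/X$ to the canonical equalizer inclusion $E \hookrightarrow X$ just described. That inclusion is an isometric embedding, so $m$ is equivalent to an isometric embedding, as required.

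For the converse — the substantive direction — I would, given an isometric embedding $m\colon A \to X$, exhibit it as the equalizer of its own cokernel pair. Form the pushout $P = X \sqcup_A X$ of $m$ against itself, with coprojections $i_1, i_2\colon X \to P$. The key observation is that the forgetful functor $U\colon \pmetv \to \catsets$ has a right adjoint, namely the functor sending a set to that set with the identically-zero pseudometric (any map into such a space is uniformly continuous); hence $U$ preserves all colimits, and the underlying set of $P$ is the set-theoretic pushout. Consequently $i_1(x) = i_2(x)$ holds in $P$ exactly when $x$ lies in (the image of) $A$. Computing the equalizer of $i_1, i_2$ by the recipe above therefore yields the subset $A \subseteq X$ with its subspace metric, which — because $m$ is an isometric embedding — is precisely $m$; so $m$ is a regular mono.

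The main obstacle, and the point where pseudometrics rather than metrics are essential, is the claim that $i_1$ and $i_2$ genuinely disagree at every point outside $A$. For $x \notin A$ with $d(x,A) = 0$, the two copies $i_1(x), i_2(x)$ sit at distance $0$ in $P$, yet they must remain \emph{distinct} points for the equalizer to come out as $A$ itself and not its closure. This is exactly what $U$-preservation of the pushout guarantees: on underlying sets the two copies of $x$ are identified if and only if $x \in A$, irrespective of any collapse of distances. The remaining verifications are routine: that $\pmetv$ admits the required pushout (it is cocomplete, colimits being the set-level colimit equipped with the appropriate gluing/quotient pseudometric, which stays within diameter $\leq 1$), and the equalizer computation recorded in the first paragraph.
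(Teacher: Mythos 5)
Your forward direction is the same as the paper's: compute the equalizer of a parallel pair as the agreement set with the subspace pseudometric, observe the inclusion is an isometric embedding, and conclude by uniqueness of equalizers in the slice. The backward direction is genuinely different. The paper avoids colimits entirely: given an isometric embedding $i: A \hookrightarrow X$ it maps $X$ to the two-point space with both points at distance $0$, sending $A$ to one point and its complement to the other; both maps are automatically uniformly continuous, and $i$ equalizes them. You instead exhibit $m$ as the equalizer of its cokernel pair, using the right adjoint to $U: \pmetv \rightarrow \catsets$ (the zero-pseudometric functor) to see that the underlying set of the pushout is the set-theoretic one, so $i_1, i_2$ agree exactly on $A$. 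Your route is the standard ``regular mono equals effective mono'' argument and would generalize to settings without an indiscrete two-point object, but it buys this at the cost of the one step you dismiss as routine: existence of the pushout. In $\pmetu$ the generic chain-infimum quotient pseudometric does \emph{not} obviously satisfy the universal property, since a modulus of uniform continuity need not be subadditive and long chains can blow up the estimate; so the blanket claim that $\pmetv$ is cocomplete with set-level colimits is the genuinely delicate point. For the specific pushout you need it does work --- gluing two copies of $X$ along an isometrically embedded $A$, any chain can be reduced to a single crossing of $A$, so the induced map out of the pushout has modulus $\epsilon_u + \epsilon_v$, and one truncates the gluing pseudometric at $1$ to stay within diameter $\leq 1$ --- but this verification should be carried out rather than asserted. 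Your closing observation that distinct points at distance $0$ are what keep the equalizer from being the closure of $A$ is exactly the role pseudometrics play in the paper's argument as well.
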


\begin{proof}

To prove the left-to-right direction, we start by showing that for any parallel pair of maps
\begin{tikzcd}
X
\arrow[shift left]{r}{f}
\arrow[shift right]{r}[swap]{g}
		& Y
\end{tikzcd}
there is an isometric embedding $i: A \rightarrow X$ giving an equalizer to the pair $f, g$.

Let $A \subseteq X$ have the set of points $\{ x \in X \mid f(x) = g(x)\}$, and let the metric on $A$ be the restriction of the metric on $X$. Clearly the inclusion $i: A \rightarrow X$, which is an isometric embedding, is an equalizer for $f$ and $g$. Note that any other mono $m: A^\prime \rightarrow X$ occurring as an equalizer for $f,g$ must be equivalent to $i$.

Now given any regular mono $m$ which equalizes some parallel pair $f,g$, by the above we can find an isometric embedding $i$ also equalizing $f,g$, therefore showing that $m$ is equivalent to an isometric embedding.

For the right-to-left direction, let $i: A \rightarrow X$ be an isometric embedding, i.e. an inclusion $A \subseteq X$. Let $Y$ be the space with two points at distance $0$ from each other, and label them $a, b$. Define $f: X \rightarrow Y$ to be $f(x) = a$ for all $x \in X$, and define $g: X \rightarrow Y$ to be $g(x) = a$ iff $x \in A$, and $g(x) = b$ otherwise. Clearly $i$ must be an equalizer for $f, g$, so that any mono equivalent to $i$ must also be an equalizer for $f,g$.

\end{proof}

Henceforth, any time we speak of ``subobject'' we mean the equivalence classes defined above of \emph{regular} monomorphisms:

\begin{definition}
\label{def:strsubs}

Let $\mcc$ be a category.

\begin{enumerate}

\item Given $X \in \mcc$:

	\begin{enumerate}

	\item Let $\widetilde{\Sub} X$ denote the full subcategory of $\mcc / X$ on the regular monomorphisms of $\mcc$ with codomain $X$.

	\item Let $\Sub X$ denote the (necessarily posetal) category whose objects consist of isomorphism classes of objects of $\widetilde{\Sub} X$, and whose morphisms are the induced equivalence classes of morphisms in $\widetilde{\Sub} X$. Equivalently, $\Sub X$ is any skeleton of $\widetilde{\Sub} X$. By a \emph{subobject} of $X$ we mean an object of $\Sub X$.

	\end{enumerate}

\item Given $f: X \rightarrow Y$, we say that $i: f(X) \rightarrow Y$ is the \emph{r-image of $f$} when $i$ is a regular mono factoring $f = i e$ such that for any other factorization $f = m e^\prime$ with $m$ a regular mono, we have a commutative diagram
\begin{tikzcd}
X
\arrow{r}{e}
\arrow{d}[swap]{e^\prime}
	& \cdot
	\arrow[dotted]{dl}[swap]{\exists !}
	\arrow{d}{i} \\
\cdot
\arrow{r}[swap]{m}
	& Y
\end{tikzcd}.

We say that $\mcc$ \emph{has r-images} when every morphism $f$ has a factorization as above.

\end{enumerate}

\end{definition}

As was the case with monomorphisms, it is an easy fact that regular monos are preserved under pullback. Therefore, any $f: X \rightarrow Y$ induces a pullback functor $\underline{f}^\ast: \Sub Y \rightarrow \Sub X$.

We make the following definition:

\begin{definition}
\label{def:rreg}
\item

\begin{enumerate}

\item A category $\mcc$ is called \emph{r-regular} when it satisfies the following:

\begin{enumerate}

\item $\mcc$ has finite limits;

\item $\mcc$ has r-images;

\item Pulling back across morphisms in $\mcc$ preserves r-images;

\item The composition of two regular monomorphisms in $\mcc$ is again a regular monomorphism in $\mcc$.

\end{enumerate}

\item An r-regular category $\mcc$ is called \emph{r-geometric} when for each $X \in \mcc$, $\Sub X$ is a small-complete lattice and pulling back across any $f: X \rightarrow Y$ preserves this structure.

\end{enumerate}

\end{definition}

We assume that our categories are \emph{r-wellpowered}, which is to say that for each $X \in \mcc$ the objects of $\Sub X$ form a set instead of a proper class.

As is the case with regular and geometric categories, r-regular and r-geometric categories automatically yield the following structure:

\begin{proposition}
\label{prop:heytadjstopb}

Let $\mcc$ be a category.

\begin{enumerate}

\item If $\mcc$ is r-regular, the following hold:\label{rregresults}

	\begin{enumerate}
	
	\item For each $f: X \rightarrow Y$, the pullback functor $\underline{f}^\ast: \Sub Y \rightarrow \Sub X$ has a left adjoint $\underline{\exists}_f: \Sub X \rightarrow \Sub Y$.\label{rregladj}
	
	\item $\underline{\exists}_f (A \wedge \underline{f}^\ast B) = \underline{\exists}_f A \wedge B$ for each $A \in \Sub X$ and $B \in \Sub Y$.\label{frobrec}
	
	\item Given a commutative square \quad
	\begin{tikzcd}
	X
	\arrow{r}{f}
	\arrow{d}{h}
		& Y
		\arrow{d}{g} \\
	W
	\arrow{r}{k}
		& Z
	\end{tikzcd}
	, the induced square
	\begin{equation*}
	\begin{tikzcd}[row sep = 4em, column sep = 4em]
	\Sub Y
	\arrow{r}{\underline{f}^\ast}
	\arrow{d}{\underline{\exists}_g}
		& \Sub X
		\arrow{d}{\underline{\exists}_h} \\
	\Sub Z
	\arrow{r}{\underline{k}^\ast}
		& \Sub W
	\end{tikzcd}
	\end{equation*}
	commutes.\label{bccond}
	\end{enumerate}

\item If in addition $\mcc$ is r-geometric, then the following hold:\label{rgeomresults}

	\begin{enumerate}
	
	\item For each $f: X \rightarrow Y$, the pullback functor $\underline{f}^\ast: \Sub Y \rightarrow \Sub X$ also has a right adjoint $\underline{\forall}_f: \Sub X \rightarrow \Sub Y$.\label{rgeomradj}

	\item For each $X \in \mcc$ and $A \in \Sub X$, the ``meet-with-$A$'' functor $A \wedge - : \Sub X \rightarrow \Sub X$ has a right adjoint $A \Rightarrow - : \Sub X \rightarrow \Sub X$ (``Heyting implication'') which is contravariant in $A$.\label{rgeomheyting}
	
	\end{enumerate}

\end{enumerate}

\end{proposition}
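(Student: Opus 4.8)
The plan is to transplant the standard theory of regular and geometric categories (see \cite{johnstone}) to the present setting, replacing throughout ``image'' by ``r-image'' and ``monomorphism'' by ``regular monomorphism''; conditions (c) and (d) of Definition \ref{def:rreg} are precisely what make this substitution legitimate. To construct the left adjoint in \ref{rregladj}, given a subobject represented by a regular mono $a \colon A \to X$ I would set $\underline{\exists}_f A$ to be the r-image of the composite $fa \colon A \to Y$, which exists and is a regular mono into $Y$ by the r-image hypothesis. The adjunction $\underline{\exists}_f A \leq B \iff A \leq \underline{f}^\ast B$ then reduces to two universal properties: by the universal property of the pullback computing $\underline{f}^\ast B$, one has $A \leq \underline{f}^\ast B$ iff $fa$ factors through the regular mono $b$ representing $B$; and by the universal property of the r-image factorization (Definition \ref{def:strsubs}), $fa$ factors through the regular mono $b$ iff its r-image $\underline{\exists}_f A$ does. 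A short diagram chase then gives the equivalence.

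The substantive steps are \ref{bccond} and \ref{frobrec}, where condition (c) (stability of r-images under pullback) supplies the geometric content. For \ref{bccond} I would take the r-image factorization $B \twoheadrightarrow \underline{\exists}_g B \hookrightarrow Z$ of $gb$ and pull it back along $k$; assuming the given square is a pullback (the usual hypothesis for Beck--Chevalley --- note the statement fails for arbitrary commutative squares), condition (c) ensures the result is again an r-image factorization, whose regular-mono part is $\underline{k}^\ast \underline{\exists}_g B$ and whose domain is canonically $\underline{f}^\ast B$ via the pasting identity $W \times_Z B \cong X \times_Y B$, so that this r-image is exactly $\underline{\exists}_h \underline{f}^\ast B$. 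For \ref{frobrec}, the inequality $\underline{\exists}_f(A \wedge \underline{f}^\ast B) \leq \underline{\exists}_f A \wedge B$ is formal (monotonicity together with the counit $\underline{\exists}_f \underline{f}^\ast \leq \mathrm{id}$), while the reverse inequality follows from a parallel pullback-pasting argument: the cover part of an r-image factorization is stable under pullback by (c), and under the identification $A \wedge \underline{f}^\ast B \cong A \times_Y B$ this exhibits $\underline{\exists}_f A \wedge B$ as the r-image of $(A \wedge \underline{f}^\ast B) \hookrightarrow X \xrightarrow{f} Y$. Condition (d) enters here to guarantee that the composite $\underline{\exists}_f A \times_Y B \to \underline{\exists}_f A \to Y$ is a regular mono, hence genuinely computes the meet $\underline{\exists}_f A \wedge B$ as a subobject of $Y$.

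For the r-geometric statements I would invoke the adjoint functor theorem for posets: a monotone map between complete lattices has a right adjoint exactly when it preserves all joins. Since $\mcc$ is r-geometric, each $\Sub X$ is a complete lattice and each $\underline{f}^\ast$ preserves all joins, so \ref{rgeomradj} follows at once, producing $\underline{\forall}_f$. For \ref{rgeomheyting}, I would note that for a regular mono $a \colon A \to X$ the adjoint $\underline{\exists}_a$ is just the inclusion of $\Sub A$ into $\Sub X$ (well-defined by (d)), and that $A \wedge - = \underline{\exists}_a \circ \underline{a}^\ast$ as endofunctors of $\Sub X$. Both factors preserve joins ($\underline{a}^\ast$ by the r-geometric hypothesis, $\underline{\exists}_a$ as a left adjoint), hence so does $A \wedge -$, which therefore has a right adjoint $A \Rightarrow -$; contravariance in $A$ is immediate, since $A \leq A'$ forces $A \wedge - \leq A' \wedge -$ and taking right adjoints reverses the order.

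The main obstacle I anticipate is isolating the genuinely geometric content of \ref{bccond} and \ref{frobrec}: these are the only non-formal steps, and each demands that the relevant pullback squares be arranged correctly so that condition (c) can be applied, all while using condition (d) to keep every pullback and composite of regular monos that arises a regular monomorphism --- so that the subobjects and meets manipulated along the way are legitimately computed.
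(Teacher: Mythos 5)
Your proposal is correct and takes essentially the same route as the paper, which simply defers to the standard development for regular and geometric categories (A1.3 of Johnstone, with ``image'' replaced by ``r-image'' throughout, conditions (c) and (d) of Definition \ref{def:rreg} doing exactly the work you describe); your constructions of $\underline{\exists}_f$ as the r-image of the composite, the pullback-pasting arguments for Frobenius and Beck--Chevalley, and the poset adjoint functor theorem for the r-geometric clauses all match that standard treatment. One point worth flagging: you correctly insert the hypothesis that the square in (1c) be a \emph{pullback} square --- the paper's statement says only ``commutative,'' but its own one-line proof (``follows by pullback stability of r-images'') tacitly assumes this, and the claim is false for arbitrary commutative squares (e.g.\ $X = \emptyset$ with $Y = W = Z$ a point already gives a counterexample in $\catsets$).
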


\begin{proof}

The proofs of (\ref{rregladj}) and (\ref{frobrec}) are essentially the same as for regular categories, and can be found in e.g. A1.3 of \cite{johnstone}. (\ref{bccond}) follows easily by pullback stability of r-images.

(\ref{rgeomradj}) is an easy consequence of the definition of r-geometric category and the adjoint functor theorem. The construction that gives (\ref{rgeomheyting}) is found in e.g. 1.7 of \cite{fresce}.

\end{proof}

In particular, the above imply that for an r-geometric category $\mcc$ with $X \in \mcc$, finite meets in $\Sub X$ commute with arbitrary joins.

The structures/results described in Proposition \ref{prop:heytadjstopb} above are important features of categorical semantics of usual first order logic, as described in e.g. \cite{johnstone}, \cite{moemac}. The point emphasized by Proposition \ref{prop:heytadjstopb} is that, just as well as geometric categories, r-geometric categories support the interpretation of first order logic, where we interpret predicates to be \emph{regular} subobjects. Later in this paper we will have occasion to study the adjoints given by (\ref{rregladj}) and (\ref{rgeomradj}) of Proposition \ref{prop:heytadjstopb}.

\subsection{Categorical semantics in $\pmetv$}

The relevance for us of the preceding exposition of r-geometric categories is that $\pmetv$ is an example of such. As a point of terminology, whenever we mention ``modulus of continuity'' it should be understood that we are referring to a modulus of \emph{uniform} continuity.

We first establish the following lemma:

\begin{lemma}
\label{lem:metpbs}

$\pmetv$ has finite limits.

\end{lemma}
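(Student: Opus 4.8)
The plan is to use the standard fact that a category has all finite limits as soon as it has a terminal object, binary products, and equalizers; I will exhibit each of these explicitly in $\pmetv$ and verify the relevant universal properties, taking care that each construction respects the diameter bound and the class of maps (uniformly continuous, Lipschitz, or $1$-Lipschitz) defining whichever variant we work in. The terminal object is immediate: it is the one-point space, to which there is a unique and trivially ($1$-Lipschitz, hence uniformly continuous) map from any space.

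For binary products, given $X, Y \in \pmetv$ I would equip the underlying set $X \times Y$ with the supremum metric $d((x_1,y_1),(x_2,y_2)) = \max(d_X(x_1,x_2), d_Y(y_1,y_2))$. This choice is the crux: the maximum of two quantities each $\leq 1$ is again $\leq 1$, so the diameter bound is preserved (a sum metric would fail this), and the projections are $1$-Lipschitz, hence valid morphisms in every variant. For the universal property, given $f: Z \to X$ and $g: Z \to Y$ the set-theoretic pairing $(f,g): Z \to X \times Y$ is the unique candidate; its uniform continuity follows from that of $f$ and $g$ by taking $\delta = \min(\delta_f, \delta_g)$ for a given $\epsilon$, since $d((f(z),g(z)),(f(z'),g(z'))) = \max(d_X(f(z),f(z')), d_Y(g(z),g(z')))$. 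The same identity shows that if $f, g$ are Lipschitz then $(f,g)$ is Lipschitz with constant $\max(L_f, L_g)$, and that a pair of $1$-Lipschitz maps pairs to a $1$-Lipschitz map, so the construction is simultaneously a product in all three variants.

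For equalizers of a parallel pair $f, g: X \to Y$, I would take the subspace $E = \{x \in X \mid f(x) = g(x)\}$ with the metric restricted from $X$ together with the (isometric, hence $1$-Lipschitz) inclusion $E \hookrightarrow X$; this is exactly the construction already used in the proof of Proposition \ref{prop:regmonoembeds}. Any $h: Z \to X$ equalizing $f, g$ factors set-theoretically through $E$, and because the metric on $E$ is merely the restriction, the factoring map $Z \to E$ inherits verbatim the modulus of continuity (or Lipschitz constant) of $h$, so it is again a morphism of the appropriate type.

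There is no serious obstacle here; the only point requiring genuine care is the choice of metric on the product. The supremum metric is what simultaneously secures the diameter bound and realizes the universal product property uniformly across the uniformly-continuous, Lipschitz, and $1$-Lipschitz settings, whereas a sum or $\ell^p$ metric would either break the diameter constraint or fail to make the projections $1$-Lipschitz. Having established a terminal object, binary products, and equalizers, the existence of all finite limits follows formally.
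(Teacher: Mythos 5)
Your proof is correct, but it decomposes the problem differently from the paper: you build finite limits from a terminal object, binary products (with the maximum metric), and equalizers (subspaces with the restricted metric), whereas the paper's proof of Lemma \ref{lem:metpbs} uses the equally standard generating set of a terminal object plus pullbacks, constructing the pullback directly as the set-theoretic pullback equipped with the maximum metric. The underlying ingredients are the same --- in effect the paper's pullback fuses your product and equalizer into a single construction --- so neither route is more or less elementary. Your version has the mild advantage of isolating the two constituent constructions that the paper reuses elsewhere anyway: the maximum-metric product reappears as the distinguished choice of product in Proposition \ref{prop:metricprops}, and the restricted-metric equalizer is exactly the one used in Proposition \ref{prop:regmonoembeds}. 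The paper's version has the advantage that pullbacks are the limits it actually manipulates downstream (e.g.\ the pullback functors $\underline{f}^\ast$ on subobject posets and Remark \ref{rmk:pbofisos} on pullbacks of isometric embeddings), so exhibiting them explicitly pays for itself. Your attention to the diameter bound and to the three variants of morphism class is well placed and matches the care the paper takes implicitly.
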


\begin{proof}

$\pmetv$ clearly has a terminal object, so it suffices to show that it has pullbacks. Given a diagram
\begin{equation}
\label{eq:pbdiag}
\begin{tikzcd}
 & B
 \arrow{d}{j}\\
X
\arrow{r}{f}
 & Y \\
\end{tikzcd}
\end{equation}
there is a set-theoretic pullback $X \times_{Y} B$, which we take to be the set of points of a space $A$. All the points of $A$ are thus uniquely of the form $(x,b)$ for $x \in X$ and $b \in B$; we take the distance between $(x_1, b_1)$ and $(x_2, b_2)$ to be $\max \left ( d_X (x_1, x_2), d_B ( b_1, b_2) \right )$. There are evident maps $i: A \rightarrow X$ and $h: A \rightarrow B$ given by $i: (x,b) \mapsto x$ and $h: (x,b) \mapsto b$. It is easily verified that this is the required pullback.

\end{proof}

\begin{remark}
\label{rmk:pbofisos}

In case $j$ in the diagram (\ref{eq:pbdiag}) is an isometric embedding, there is a choice of pullback
\begin{equation*}
\label{eq:pbdiag2}
\begin{tikzcd}
A
\arrow{r}{h}
\arrow{d}[swap]{i}
\arrow[phantom, "\lrcorner", very near start]{dr}
 & B
 \arrow{d}{j}\\
X
\arrow{r}{f}
 & Y \\
\end{tikzcd}
\end{equation*}
such that $i$ is also an isometric embedding; let the points of $A$ be the same as before, but take the distance between $(x_1, b_1)$ and $(x_2, b_2)$ to be $d_X(x_1, x_2)$ (one can check that the resulting space is isomorphic to the one constructed above). We still have the maps $i: A \rightarrow X$ and $h: A \rightarrow B$, where we can take the modulus of continuity of $h$ to be the same as that of $f$.

\end{remark}

The below lemma shows that regular monos are closed under composition in $\pmetv$ (which is trivial to verify for $\pmet$, since isomorphisms are isometries there).

\begin{lemma}
\label{lem:regmonoscomp}

Let $l, m$ be composable regular monos in $\pmetv$, i.e. the domain of $m$ and the codomain of $l$ are the same. Then their composition $ml$ is again a regular mono in $\pmetv$.

\end{lemma}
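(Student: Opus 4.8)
The plan is to reduce everything to a concrete statement about metric subspaces via Proposition~\ref{prop:regmonoembeds}, and then to exploit the fact that an isomorphism in $\pmetv$ restricts to an isomorphism onto its image on any subspace. Write $l: A \to B$ and $m: B \to X$ for the two composable regular monos. By Proposition~\ref{prop:regmonoembeds} each is equivalent to an isometric embedding, which unwinds to the following concrete assertion: the corestriction of $l$ onto its set-theoretic image $l(A) \subseteq B$, equipped with the metric inherited from $B$, is an isomorphism; and likewise the corestriction of $m$ onto $m(B) \subseteq X$, with the metric inherited from $X$, is an isomorphism. Conversely, to conclude that $ml$ is regular it suffices, again by Proposition~\ref{prop:regmonoembeds}, to show that the corestriction of $ml$ onto its image $(ml)(A) \subseteq X$ with the subspace metric from $X$ is an isomorphism, for then $ml$ is equivalent to the isometric embedding $(ml)(A) \hookrightarrow X$.

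The key step is then to show that $m$ restricts to an isomorphism $l(A) \xrightarrow{\sim} m(l(A))$, where the subspace $l(A) \subseteq B$ carries the metric from $B$ and $m(l(A))$ carries the metric from $X$. This follows because both $m: B \to m(B)$ and its inverse are uniformly continuous, and restricting a uniformly continuous map to a subspace preserves its modulus of continuity; hence $m$ and $m^{-1}$ restrict to mutually inverse uniformly continuous maps on the relevant subspaces. Observing that $(ml)(A) = m(l(A))$ and that the subspace metric on $m(l(A))$ viewed inside $m(B)$ agrees with that viewed inside $X$ (nesting of subspaces), the corestriction of $ml$ onto its image factors as the composite of the two isomorphisms $A \xrightarrow{\sim} l(A)$ and $l(A) \xrightarrow{\sim} m(l(A))$, and is therefore itself an isomorphism. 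This gives exactly what Proposition~\ref{prop:regmonoembeds} needs.

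The subtle point to watch, and the reason the statement is not entirely trivial in $\pmetu$ (or $\pmetl$), is that isomorphisms there are uniform homeomorphisms rather than isometries, so one cannot simply compose two isometric embeddings and expect an isometric embedding on the nose. The argument instead routes through the image factorizations and relies on two mild but essential facts: that subspace metrics are compatible under the nesting $m(l(A)) \subseteq m(B) \subseteq X$, and that uniform continuity (with its modulus) is inherited by restrictions to subspaces. These are precisely what force the restricted map to remain an isomorphism onto its image. In the special case $\pmet$, where isomorphisms are isometries, the whole argument collapses to the observation that a composite of isometric embeddings is an isometric embedding, as already noted.
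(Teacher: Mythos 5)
Your proof is correct and follows essentially the same route as the paper's: both reduce via Proposition~\ref{prop:regmonoembeds} to isometric embeddings composed with isomorphisms, and both hinge on the observation that an isomorphism in $\pmetv$ restricts to an isomorphism between a subspace and its image because uniform continuity (with its modulus) is preserved under restriction. The paper phrases this as the isomorphism $g$ restricting to an isomorphism $g'\colon A \to B$ onto the image $B = g(A)$, which is exactly your key step.
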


\begin{proof}

Proposition \ref{prop:regmonoembeds} gives us that both $l$ and $m$ are equivalent (in the sense of the paragraph preceding Proposition \ref{prop:regmonoembeds}) to isometric embeddings, giving us the following commutative diagram:
\begin{equation*}
\begin{tikzcd}
	& A
	\arrow{dr}{l^\prime}
		&
			&Y
			\arrow{dr}{m^\prime}
				& \\
\cdot
\arrow{ur}{f}
\arrow{rr}{l}
	&
		& X
		\arrow{ur}{g}
		\arrow{rr}{m}
			&
				& \cdot
\end{tikzcd}
\end{equation*}
where $f,g$ are isomorphisms and $l^\prime, m^\prime$ are isometric embeddings. We now construct a space $B$, with an isomorphism $g^\prime: A \rightarrow B$ and an isometric embedding $k: B \rightarrow Y$ which extends the previous diagram to the following commutative diagram:
\begin{equation*}
\begin{tikzcd}
	&	
		& B
		\arrow{dr}{k}
			&
				& \\
	& A
	\arrow{dr}{l^\prime}
	\arrow{ur}{g^\prime}
		&
			&Y
			\arrow{dr}{m^\prime}
				& \\
\cdot
\arrow{ur}{f}
\arrow{rr}{l}
	&
		& X
		\arrow{ur}{g}
		\arrow{rr}{m}
			&
				& \cdot
\end{tikzcd}
\end{equation*}
which would then exhibit $ml$ as equivalent (via $g^\prime f$) to an isometric embedding $m^\prime k$, therefore a regular mono.

Since $l^\prime: A \rightarrow X$ is an isometric embedding we may consider $A$ to be a subspace of $X$, with $l^\prime$ being the inclusion. Let $B$ be the image of the restriction $g \mid_A$ of $g$ to $A \subseteq X$. Clearly, since $B$ is a subspace of $Y$, there is an isometric embedding (the inclusion) $k: B \rightarrow Y$. Moreover the isomorphism $g: X \rightarrow Y$ restricts to an isomorphism $g^\prime: A \rightarrow B$, since $B$ is the image of the restriction of $g$ to $A$. By construction we have that $gl^\prime = kg^\prime$.

\end{proof}

\begin{proposition}
\label{prop:metheyt}

$\pmetv$ is an r-geometric category.

\end{proposition}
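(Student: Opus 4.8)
The plan is to verify the four axioms of Definition \ref{def:rreg} in turn and then the two extra r-geometric conditions, leaning throughout on the characterization of regular monos as (equivalence classes of) isometric embeddings from Proposition \ref{prop:regmonoembeds} and on the explicit pullbacks of Lemma \ref{lem:metpbs} and Remark \ref{rmk:pbofisos}. Two of the four r-regular axioms are already available: finite limits come from Lemma \ref{lem:metpbs} (axiom (a)), and closure of regular monos under composition is Lemma \ref{lem:regmonoscomp} (axiom (d)). For r-images (axiom (b)), given $f: X \rightarrow Y$ I would take $f(X) \subseteq Y$ to be the set-theoretic image with the subspace metric inherited from $Y$; the inclusion $i: f(X) \rightarrow Y$ is then an isometric embedding, hence a regular mono, and the corestriction $e: X \rightarrow f(X)$ is uniformly continuous with the same modulus as $f$, since distances in $f(X)$ agree with those in $Y$. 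To check the universal property of Definition \ref{def:strsubs}, any competing factorization $f = m e^\prime$ through a regular mono $m$ may, after replacing $m$ by an equivalent isometric embedding, be viewed as a subspace $D \subseteq Y$ through which $f$ factors on points; then $f(X) \subseteq D$ as subsets of $Y$, and the required diagonal is simply the inclusion $f(X) \rightarrow D$, which is isometric (so uniformly continuous) and unique because $m$ is monic.

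For pullback-stability of r-images (axiom (c)) I would argue on underlying sets. Given $f: X \rightarrow Y$ with r-image $f(X) \subseteq Y$ and any $g: Z \rightarrow Y$, form the pullback $P = X \times_Y Z$ with projection $q: P \rightarrow Z$. On one hand, Remark \ref{rmk:pbofisos} identifies $\underline{g}^\ast(f(X))$ with the subspace $g^{-1}(f(X)) \subseteq Z$; on the other, the r-image of $q$ is its set-image $q(P) \subseteq Z$ with the subspace metric, and a one-line computation gives $q(P) = \{ z \in Z : g(z) \in f(X) \} = g^{-1}(f(X))$. Since both subspaces carry the metric restricted from $Z$, they coincide, yielding axiom (c) and completing the verification that $\pmetv$ is r-regular.

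It remains to supply the r-geometric conditions of Definition \ref{def:rreg}. The key observation is that Proposition \ref{prop:regmonoembeds} lets me identify $\Sub X$, for any $X$, with the power-set lattice $(\mathcal{P}(X), \subseteq)$ of subsets of the underlying set of $X$: every regular subobject is represented by a unique such subset carrying the subspace metric, and distinct subsets are inequivalent since an isomorphism of subspaces over $X$ must be the identity on points. Under this identification the categorical meet is intersection (computed as the pullback of two isometric embeddings via Remark \ref{rmk:pbofisos} and Lemma \ref{lem:regmonoscomp}) and the join is union, so $\Sub X$ is a complete lattice, and a genuine set, giving r-wellpoweredness. Finally, Remark \ref{rmk:pbofisos} shows that for $f: X \rightarrow Y$ the functor $\underline{f}^\ast: \Sub Y \rightarrow \Sub X$ is, under these identifications, precisely the preimage map $f^{-1}: \mathcal{P}(Y) \rightarrow \mathcal{P}(X)$, which preserves arbitrary unions and intersections; hence pullback preserves all small joins and meets, and $\pmetv$ is r-geometric.

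I expect the only real care to be needed in the r-image and its pullback-stability, where one must confirm that the set-theoretic image and preimage, taken with the subspace metric, genuinely realize the categorical image and pullback. Once Proposition \ref{prop:regmonoembeds} and Remark \ref{rmk:pbofisos} reduce everything to subsets-with-subspace-metric, the lattice-theoretic claims collapse to the familiar facts that power sets are complete and that preimages commute with unions and intersections.
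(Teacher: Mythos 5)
Your proposal is correct and follows essentially the same route as the paper: identify r-images with set-theoretic images carrying the subspace metric, reduce pullback-stability to the underlying pullbacks in $\catsets$, and use Proposition \ref{prop:regmonoembeds} to identify $\Sub X$ with the powerset lattice so that the geometric structure is inherited from $\catsets$. You simply spell out in more detail the verifications (universal property of the image, the computation $q(P) = g^{-1}(f(X))$) that the paper leaves implicit.
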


\begin{proof}

Since regular monos are (without loss of generality) isometric embeddings in $\pmetv$, r-images are just the images - in the traditional sense - of functions between spaces. If $f = ie$ is an $r$-image factorization of a map $f: X \rightarrow Y$, then we must show that for any map $g: Z \rightarrow Y$, the map $g^\ast i$ in the following diagram
\begin{equation*}
\begin{tikzcd}
g^\ast X
\arrow{d}{g^\ast e}
\arrow{r}
	& X
	\arrow{d}{e} \\
\cdot
\arrow{d}{g^\ast i}
\arrow{r}
	& \cdot
	\arrow{d}{i} \\
Z
\arrow{r}{g}
	& Y
\end{tikzcd}
\end{equation*}
is an $r$-image for $g^\ast f$.

Now the proof of Lemma \ref{lem:metpbs} (or the existence of an obvious forgetful functor $\pmetv \rightarrow \catsets$) shows that pullbacks in $\pmetv$ agree with the underlying pullbacks in $\catsets$. Therefore $g^\ast i$ at least has the correct underlying set. But pullbacks preserve the property of being a regular mono, so $g^\ast i$ is (equivalent to) an isometric embedding. This is enough to conclude that $g^\ast f = (g^\ast i) (g^\ast e)$ is an $r$-image factorization. Moreover, Lemma \ref{lem:regmonoscomp} shows that regular monos are closed under composition. In this way, $\pmetv$ inherits the regular category structure of $\catsets$ as an r-regular category structure.

Proposition \ref{prop:regmonoembeds} gives us that for each $X \in \pmetv$, $\Sub X$ is naturally isomorphic to the powerset of the underlying set of $X$, so $\pmetv$ automatically inherits the geometric category structure of $\catsets$ as an r-geometric category structure.

\end{proof}

Henceforth when we refer to the image of a map $f: X \rightarrow Y$ in $\pmetv$ we mean the r-image of $f$, which coincides with what is traditionally meant by ``image of a map''.

In the case of $\pmetv$, we have the following explicit descriptions of the adjoints to the pullback functor referred to in Proposition \ref{prop:heytadjstopb}:

Given $f: X \rightarrow Y$,

\begin{enumerate}

\item The left adjoint $\underline{\exists}_f: \Sub X \rightarrow \Sub Y$ to the pullback functor $\underline{f}^\ast$ takes $A \in \Sub X$ to $\underline{\exists}_f A \in \Sub Y$, where $\underline{\exists}_f A$ has the set of points given by the image of $f$. Functoriality of $\underline{\exists}_f$ is immediate.

\item The right adjoint $\underline{\forall}_f: \Sub X \rightarrow \Sub Y$ takes $A \in \Sub X$ to $\underline{\forall}_f A \in \Sub Y$, where $\underline{\forall}_f A$ is $\{ y \in Y \mid \text{$A$ contains all $x$ for which $f(x) = y$} \}$. Again, functoriality of $\underline{\forall}_f$ is immediate.

\end{enumerate}

It is straightforward to verify that the above indeed give left and right adjoints to $\underline{f}^\ast$.

That these constructions are the same as in the category of sets is not coincidence; since $\pmetv$ directly inherits its r-geometric structure from the geometric structure of $\catsets$, we should expect the various logical operations that may involve (strong) subobjects of spaces $X \in \pmetv$ should agree with those same operations performed on the subobjects of the underlying sets of those spaces.

\section{Indexed subobjects as predicates of continuous logic}
\label{sec:indexedsubs}

We continue investigating the categorical properties of $\pmetv$, with the intermediate goal of showing that $\pmetv$ possesses a ``continuous subobject classifier'', where by ``continuous subobject'' we mean a real-valued predicate satisfying appropriate continuity conditions. The motivation for our specific approach in this section is as follows.

Lawvere's insights in \cite{lawvere} made concrete the idea that the notion of distance is an immediate generalization of the equality predicate to the setting of $[0,1]$-valued logic (thinking of $0$ as ``true'' and $1$ as ``false''), where e.g. the triangle inequality \emph{is} the transitivity of equality. Specifically, monoidal categories $(\mcv, \otimes, \text{unit})$ may be thought of as giving either truth values for the equality predicate in appropriate corresponding logics, or values for appropriate notions of distance. From this perspective, $\mcv$-categories, i.e. categories whose hom-objects are objects of $\mcv$, are ``metric spaces'' with distances valued in $\mcv$, where the objects of the $\mcv$-category are points and the hom-objects between them are distances; equivalently, the ``degree of equality'' between two objects in a $\mcv$-category is given by their hom-object. The monoidal product of $\mcv$ - which governs composition of hom-objects in $\mcv$-categories - thus gives the notion of composition of ``distance'' or ``equality''. This perspective concretely manifests as the following.

Depending on the choice of $\mcv$, these $\mcv$-categories are actual metric spaces (if $\mcv = \mathbb{R}_{\geq 0}$ or $\mathbb{I}$; see below), where:
\begin{enumerate}[label=$\circ$, ref=$\circ$]

\item distance/equality is real-valued;

\item with composition given by (truncated, if $\mcv = \mathbb{I}$) addition;

\item the composition axioms for $\mcv$-categories yield the triangle inequality;

\end{enumerate}

or simply just sets (if $\mcv = \mathbbm{2}$; see below), where
\begin{enumerate}[label=$\circ$, ref=$\circ$]

\item distance/equality is $\{0,1\}$-valued;

\item with composition given by conjunction;

\item the composition axioms for $\mcv$-categories yield transitivity of equality.

\end{enumerate}

The prototypical example of a monoidal category relevant to logic is $(\mathbbm{2}, \max, 0)$, where $\mathbbm{2}$ is the poset category $0 \leftarrow 1$, and we think of $0$ as ``true'' and $1$ as ``false'' because we think of these as values for the ``distance'' i.e. equality predicate. (This notational convention, while sensible from our ``metric'' perspective, suffers the unfortunate side effect that $0$ is the terminal object and $1$ is the initial object.) As mentioned above, $\mathbbm{2}$-categories are essentially just sets, and the corresponding logic is the usual one where equality is $\{0,1\}$-valued.

The relevant monoidal category for metric spaces and continuous logic is either $(\mathbb{R}_{\geq 0}, +, 0)$ or $(\mathbb{I}, \dotplus, 0)$, where $\mathbb{R}_{\geq 0}$ is the poset category with:

\begin{enumerate}[label=$\circ$,ref=$\circ$]

\item set of objects the set $[0,\infty]$;

\item morphisms $a \leftarrow b$ iff $a \leq b$ as real numbers;

\end{enumerate}

and $\mathbb{I}$ is the poset category with:

\begin{enumerate}[label=$\circ$,ref=$\circ$]

\item set of objects the set $[0,1]$;

\item morphisms $a \leftarrow b$ iff $a \leq b$ as real numbers;

\item $a \dotplus b = \min (a+b, 1)$.

\end{enumerate}

To minimize notational clutter we will simply use $+$ to mean $\dotplus$ when it is clear that we are working with $\mathbb{I}$. The obvious inclusion of monoidal categories $\mathbbm{2} \hookrightarrow \mathbb{I}$ induces a functor turning each $\mathbbm{2}$-category into an $\mathbbm{I}$-category, which corresponds to regarding every set as a metric space with the discrete metric.

The lesson embedded in \cite{lawvere} is that taking an appropriately formulated aspect of categorical semantics of the usual $\{0,1\}$-valued logic - which describes categories in terms of how similar they are to the category of sets, the quintessential model of $\{0,1\}$-valued logic - and carefully replacing occurrences of $\mathbbm{2} = \{0,1\}$ with $\mathbbm{I} = [0,1]$ should yield an analogous aspect of categorical semantics of $[0,1]$-valued logic, which should describe categories in terms of how similar they are to the category of metric spaces. This is the heuristic that underpins the approach we take in this section.

In what follows, $\mcv$ will always denote a (complete) posetal monoidal category with appropriate structure, two important examples of which are $\mathbbm{2}$ and $\mathbb{I}$; such a category is called a \emph{quantale}, whose definition we recall below. In accordance with these main examples, for any $r, s \in \mcv$ when we say $r \leq s$ we mean that $r \leftarrow s$ in $\mcv$, and moreover by $\inf\limits_i r_i$ (resp. $\sup\limits_i r_i$) we mean the \emph{join} (resp. \emph{meet}) of the $r_i$ taken in $\mcv$ (but $A \leq B$ in $\Sub X$ still means
\begin{tikzcd}
A
\arrow[tail]{r}
	& B
\end{tikzcd}). The point is that the framework we will describe applies in this more general setting; in particular everything that follows in the rest of the paper works equally well when we replace $\mathbb{I}$ (i.e. $[0,1]$) with $\mathbb{R}_{\geq 0}$ (i.e. $[0, \infty]$).

\begin{definition}
\label{def:quantale}

By a \emph{quantale} we mean a (small) posetal monoidal category $(\mcv, \otimes, \text{unit})$ which possesses all meets and joins, such that $\otimes$ preserves joins. We further require that the terminal object of $\mcv$ is the unit for $\otimes$ (these are sometimes called \emph{affine quantales}).

\end{definition}

\subsection{Continuity from indexed subobjects}
\label{subsec:indexed}

In the case of classical logic and the category $\catsets$, a predicate on $X$ may equivalently be considered as a subobject $R \in \Sub X$, or a function $\chi_R: X \rightarrow \Omega = | \mathbbm{2} |$, where $| \mathbbm{2} |$ is the underlying set $\{0, 1 \}$ of the monoidal category $\mathbbm{2}$ of classical truth values, and $\chi_R(x) = 0$ iff $x \in R \subseteq X$. (In the reverse direction we have that any function $\chi: X \rightarrow | \mathbbm{2} |$ gives a subobject $R_{\chi}$ given by $x \in R_{\chi}$ iff $\chi (x) = 0$.)

Now each subobject $A \in \Sub X$ may alternatively be thought of as an object $R \in [\Twop, \Sub X]$ of a special form, by setting $R(0) = A$ and $R(1)$ to be the terminal object in $\Sub X$, i.e. the whole set $X$. By this trivial change in perspective, in the classical case (with $\mathbbm{2}$-valued logic) we have a correspondence between maps $X \rightarrow | \mathbbm{2} |$ and (objects of) the full subcategory $\TSub X \subseteq [\Twop, \Sub X]$ on the objects $R \in [\Twop, \Sub X]$ for which $R(1)$ is terminal.

Let $| \mathbb{I} |$ be the real interval $[0, 1]$, i.e. the ``underlying space'' of the monoidal category $\mathbb{I}$, considered as an object of $\pmetv$ (with the obvious metric). We will see that by suitably extending the above perspective, we get a correspondence between maps $X \rightarrow | \mathbb{I} |$ in $\pmetv$ and (the objects of) an appropriate full subcategory of $[\Iop, \Sub X]$.

Each $X \in \pmetv$ comes with a distinguished object $D_X \in [\Iop, \Sub (X \times X)]$ which encodes the metric; we define $D_X(r)$ as giving all the pairs $(x,y) \in X \times X$ for which the distance $d_X(x,y)$ between $x$ and $y$ is $\leq r$. Here we are implicitly making a choice of product $X \times Y$ given $X, Y \in \pmetv$; we are taking $X \times Y$ to be the space with the obvious set of points and metric given by $d_{X \times Y} ( (x_1, y_1), (x_2, y_2) ) = \max ( d_X (x_1, x_2), d_Y (y_1, y_2) )$.

Categorically, the important properties of $D_X$ (whose straightforward proofs are left to the reader) are as follows:

\begin{proposition}
\label{prop:metricprops}

For each $X, Y \in \pmetv$ there is a choice of product $X \times Y$, well-behaved in the evident sense with respect to symmetry and associativity, such that there is a distinguished object $D_X \in [\Iop, \Sub (X \times X)]$ satisfying:

\begin{enumerate}

\item $D_X(0)$ contains the diagonal;

\item The functor $[\Iop, \Sub (X \times X)] \xrightarrow{\cong} [\Iop, \Sub (X \times X)]$ induced by the symmetry isomorphism $X \times X \xrightarrow{\cong} X \times X$ interchanging the factors takes $D_X$ to itself;

\item Let $\pi_{i,j}: (X \times X \times X) \rightarrow (X \times X)$ denote the projection onto the $i^{\text{th}}$ and $j^{\text{th}}$ factors, respectively. Then $\underline{\pi_{i,j}}^{\ast} D_X(r) \wedge \underline{\pi_{j,k}}^{\ast} D_X(s) \leq \underline{\pi_{i,k}}^{\ast} D_X (r + s)$ for every $r, s \in \mathbb{I}$.

\item If $r = \inf\limits_i r_i$ for some $r, r_i \in \mathbb{I}$, then $D_X(r) = \bigwedge\limits_i D_X (r_i)$;

\item Let $\pi_{X \times X}: (X \times Y \times X \times Y) \rightarrow (X \times X)$ and $\pi_{Y \times Y}: (X \times Y \times X \times Y) \rightarrow (Y \times Y)$ denote the projections preserving the ordering of the factors. Then $D_{X \times Y}(r) = \underline{\pi_{X \times X}}^{\ast} D_X(r) \wedge \underline{\pi_{Y \times Y}}^{\ast} D_Y(r)$ for all $r \in \mathbb{I}$.\label{prodmetric}

\end{enumerate}

\end{proposition}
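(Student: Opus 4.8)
The plan is to exploit the fact, established in Proposition \ref{prop:metheyt}, that $\pmetv$ inherits its r-geometric structure directly from $\catsets$: for every $Z \in \pmetv$ the poset $\Sub Z$ is naturally isomorphic to the powerset of the underlying set $|Z|$ (Proposition \ref{prop:regmonoembeds}), meets in $\Sub Z$ are computed as intersections of subsets, and by Lemma \ref{lem:metpbs} pullback along any $f \colon Z \to W$ is computed as the set-theoretic preimage $\underline{f}^\ast S = \{ z \in Z \mid f(z) \in S \}$. Under this dictionary the object $D_X \in [\Iop, \Sub(X \times X)]$ is simply the assignment $r \mapsto \{ (x,y) \in X \times X \mid d_X(x,y) \leq r \}$, a subspace and hence a regular mono; it is genuinely a functor out of $\Iop$ because the categorical order on $\mathbb{I}$ is opposite to the real order, so that $r \leq s$ (in the reals) yields the inclusion $D_X(r) \hookrightarrow D_X(s)$. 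Consequently each of the five assertions reduces to an elementary statement about the metric $d_X$ together with the chosen $\max$-product metric, which I verify pointwise.

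First I would fix the product: given $X, Y$ I take $X \times Y$ to carry $d_{X \times Y}((x_1,y_1),(x_2,y_2)) = \max(d_X(x_1,x_2), d_Y(y_1,y_2))$, exactly as in Lemma \ref{lem:metpbs}. The verification that this choice is symmetric and associative up to the canonical isomorphisms is the routine check that $\max$ makes $\pmetv$ symmetric monoidal, giving the asserted well-behavedness. Claim (1) is then immediate from $d_X(x,x) = 0 \leq 0$, so the diagonal lies in $D_X(0)$. Claim (2) is the symmetry axiom $d_X(x,y) = d_X(y,x)$: the symmetry isomorphism acts on underlying sets by $(x,y) \mapsto (y,x)$, which preserves the defining condition $d_X \leq r$ and so fixes $D_X(r)$ for every $r$. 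Claim (4) is the order-theoretic fact that a real number $a$ satisfies $a \leq r_i$ for all $i$ iff $a \leq \inf_i r_i$; applying this with $a = d_X(x,y)$ shows $\bigwedge_i D_X(r_i) = \{ (x,y) \mid d_X(x,y) \leq \inf_i r_i \} = D_X(r)$, recalling that $\inf_i r_i$ denotes the join in $\mathbb{I}$, i.e. the real infimum.

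The two remaining claims require unwinding the pullbacks. For claim (3), the preimage description gives $\underline{\pi_{i,j}}^\ast D_X(r) = \{ (x_1,x_2,x_3) \mid d_X(x_i,x_j) \leq r \}$, so the left-hand meet is the set of triples with $d_X(x_i,x_j) \leq r$ and $d_X(x_j,x_k) \leq s$; the triangle inequality then yields $d_X(x_i,x_k) \leq d_X(x_i,x_j) + d_X(x_j,x_k) \leq r \dotplus s$, placing each such triple in $\underline{\pi_{i,k}}^\ast D_X(r \dotplus s)$, which is exactly the asserted containment. For claim (5), the same preimage computation identifies $\underline{\pi_{X \times X}}^\ast D_X(r) \wedge \underline{\pi_{Y \times Y}}^\ast D_Y(r)$ with the set of quadruples satisfying simultaneously $d_X(x_1,x_2) \leq r$ and $d_Y(y_1,y_2) \leq r$; by definition of the $\max$-product metric this is precisely $\{ d_{X \times Y} \leq r \} = D_{X \times Y}(r)$.

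Since the r-geometric structure is inherited from $\catsets$, there is no genuine categorical obstacle here; the only points demanding care are bookkeeping ones, which is presumably why the author leaves the proofs to the reader. Namely, at each step one must confirm that the categorical operations $\wedge$ and $\underline{(-)}^\ast$ may be replaced by intersection and preimage — precisely the content of Propositions \ref{prop:regmonoembeds} and \ref{prop:metheyt} together with Lemma \ref{lem:metpbs} — and one must use the truncated sum $\dotplus$ rather than ordinary addition in claim (3), which is harmless exactly because all distances are bounded by $1$ and $D_X(1)$ is the whole of $X \times X$. The most error-prone point is keeping the variance straight: because the categorical order on $\mcv = \mathbb{I}$ is opposite to the real order, ``join in $\mcv$'' means real infimum in claim (4), and $D_X$ is covariant out of $\Iop$ rather than $\mathbb{I}$.
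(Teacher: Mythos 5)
Your proof is correct and is precisely the ``straightforward proof left to the reader'' that the paper intends: reduce everything to the set-level description of $\Sub$, pullback, and meet supplied by Propositions \ref{prop:regmonoembeds} and \ref{prop:metheyt} and Lemma \ref{lem:metpbs}, and then read off each clause as reflexivity, symmetry, the (truncated) triangle inequality, the characterization of infima, and the definition of the $\max$-metric. Your closing remarks on the variance of $\Iop$ versus the real order and on the harmlessness of truncation address exactly the two points where a reader could slip, so nothing is missing.
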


Henceforth when we say ``the product $X \times Y$'' for some $X, Y$, we are referring to this choice of product for $X \times Y$. (For any of the variations of $\pmetv$, this choice is just the maximum metric.)

\begin{remark}
\label{rmk:evil}

These are the beginnings of our framework, but we make an observation that may make us uncomfortable, namely that aspects of our framework fail to be isomorphism invariant (unless we work only in $\pmet$). That is, in $\pmetu$ or $\pmetl$ we may have an isomorphism $f: X \xrightarrow{\cong} X^\prime$, but will not in general have that $\underline{f \times f}^{\ast} D_{X^{\prime}} (r) = D_X (r)$ for any given $r \in \mathbb{I}$.

Indeed, the price we pay for translating the metric $d_X$ on each $X \in \pmetv$ into categorical data $D_X \in [\Iop, \Sub (X \times X)]$ is that we must keep track of the actual values of the metric on $X$, whereas isomorphisms do not necessarily respect the metric (again, unless we are in $\pmet$ where isomorphisms are necessarily isometries). This is the reason for the somewhat peculiar phrasing at the beginning of Proposition \ref{prop:metricprops} above. The upshot is that as long as we are sufficiently careful this does not end up being a problem.

\end{remark}

Now maps $X \rightarrow Y$ in $\pmetu$ are just set functions which have (and obey) a \emph{modulus of continuity}, which is an increasing function $\epsilon: [0, 1] \rightarrow [0, 1]$ that preserves infima and satisfies $\epsilon(0) = 0$. That is, a map $f: X \rightarrow Y$ is just a set function $f$ between the underlying sets such that $d_Y ( f(x), f(x^\prime) ) \leq \epsilon \left ( d_X (x, x^\prime) \right )$ for all pairs of points $x, x^\prime \in X$, where $\epsilon$ is as above. It is important that being a modulus of continuity is preserved under composition: if $\epsilon_1$ and $\epsilon_2$ are moduli of continuity, then $\epsilon_2 \circ \epsilon_1$ is again a modulus of continuity.

More categorically, this amounts to the following. Each modulus of continuity $\epsilon$ as above is equivalently an endofunctor $\epsilon: \mathbb{I} \rightarrow \mathbb{I}$ which fixes $0$ and is cocontinuous (preserves joins). Considering the endofunctor category $\Endo (\mathbb{I})$ as a monoid with the multiplication given by composition, the set $E = E_u$ of all $\epsilon$ satisfying the above gives a submonoid $E \subseteq \Endo (\mathbb{I})$. Changing our choice of $E$ corresponds to changing our choice of the variant of $\pmetv$ we are working with, since our choice of $E$ dictates which moduli of continuity are allowed; see Example \ref{ex:moduloids} and Lemma \ref{lem:continuity} below.

We note that this perspective generalizes to any quantale $\mcv$ in place of $\mathbb{I}$. We thus formalize the discussion above as follows.

\begin{definition}
\label{def:genmoduli}

Let $\mcv$ be a quantale. A functor $\epsilon: \mcv \rightarrow \mcv$ which preserves joins and the monoidal unit is called a \emph{$\mcv$-modulus}.

A (full) subcategory $E$ of the functor category $\Endo({\mcv})$ on a set of $\mcv$-moduli containing the identity $1_{\mcv}$ and which is closed under functor composition, i.e. a submonoid of $\Endo(\mcv)$, is called a \emph{$\mcv$-moduloid}.

\end{definition}

Depending on context, we consider $\mcv$-moduli $\epsilon: \mcv \rightarrow \mcv$ equivalently as functors $\epsilon: \Vop \rightarrow \Vop$; this will always be clear from context. Moreover, we refer to joins (resp. meets) in $\mcv$ as minima/infima (resp. maxima/suprema) so $\mcv$-moduli are functors which preserve infima and the monoidal unit. We extend our notational convention by saying that $\epsilon_1 \leq \epsilon_2$ when $\epsilon_1 \leftarrow \epsilon_2$ in the functor category $[\mcv, \mcv]$. Thus $\max ( \epsilon_1, \epsilon_2 )$ refers to the \emph{meet} of $\epsilon_1$ and $\epsilon_2$ as objects of $[\mcv, \mcv]$.

Important examples of quantales are the Lawvere real numbers $\mathbb{R}_{\geq 0} = (\mathbb{R}_{\geq 0}, +, 0)$ and their truncation $\mathbb{I} = (\mathbb{I}, +, 0)$, both of which we have seen at the beginning of the current section. Although we continue to focus on the latter, all of our analysis applies equally well using the former instead.

\begin{example}
\label{ex:moduloids}

We may consider the following $\mathbb{I}$-moduloids:
\begin{enumerate}

\item Let $E_u$ contain all $\mathbb{I}$-moduli. This collection certainly includes $1_{\mathbb{I}}$ and is closed under composition, so it is the maximal $\mathbb{I}$-moduloid.

\item Let $E_L$ contain all the $\mathbb{I}$-moduli given by ``multiplication by a (finite) constant'', i.e. all $\epsilon_K: \mathbb{I} \rightarrow \mathbb{I}$ of the form
\begin{equation*}
r \mapsto \min (Kr, 1)
\end{equation*}
for a finite constant $K \geq 1$. Then $\epsilon_{K_2} \circ \epsilon_{K_1} = \epsilon_{K_2 K_1}$, so that $E_L$ is a $\mathbb{I}$-moduloid.

\item Let $E_1$ contain only $1_{\mathbb{I}}$, so that it is the trivial $\mathbb{I}$-moduloid.

\end{enumerate}

\end{example}


In this light, we have the following alternate characterization of continuity:

\begin{lemma}
\label{lem:continuity}
\item

\begin{enumerate}

\item

	\begin{enumerate}

	\item For any map $f: X \rightarrow Y$ in $\pmetu$, there is some $\epsilon: \mathbb{I} \rightarrow \mathbb{I}$ in $E_u$ such that $\epsilon$ is a modulus of continuity for $f$.

	\item A map $f: X \rightarrow Y$ in $\pmetu$ is continuous with respect to a modulus $\epsilon: \mathbb{I} \rightarrow \mathbb{I}$ in $E_u$ iff for every $r\in \mathbb{I}$, $D_X (r) \leq \underline{f \times f}^{\ast} D_Y ( \epsilon(r) )$.

	\end{enumerate}

\item

	\begin{enumerate}

	\item For any map $f: X \rightarrow Y$ in $\pmetl$, there is some $\epsilon: \mathbb{I} \rightarrow \mathbb{I}$ in $E_L$ such that $\epsilon$ is a modulus of continuity for $f$.

	\item A map $f: X \rightarrow Y$ in $\pmetl$ is continuous with respect to a modulus $\epsilon: \mathbb{I} \rightarrow \mathbb{I}$ in $E_L$ iff for every $r\in \mathbb{I}$, $D_X (r) \leq \underline{f \times f}^{\ast} D_Y ( \epsilon(r) )$.

	\end{enumerate}

\item

	\begin{enumerate}

	\item For any map $f: X \rightarrow Y$ in $\pmet$, the only object $1_{\mathbbm{I}}: \mathbb{I} \rightarrow \mathbb{I}$ of $E_1$ is a modulus of continuity for $f$.

	\item A map $f: X \rightarrow Y$ in $\pmet$ is continuous with respect to the only modulus $1_{\mathbb{I}}: \mathbb{I} \rightarrow \mathbb{I}$ in $E_1$ iff for every $r\in \mathbb{I}$, $D_X (r) \leq \underline{f \times f}^{\ast} D_Y ( r )$.

	\end{enumerate}

\end{enumerate}

\end{lemma}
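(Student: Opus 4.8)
The plan is to reduce every assertion to a statement about underlying sets and ordinary distance functions, exploiting the fact (established in the proof of Proposition \ref{prop:metheyt}) that for any $Z \in \pmetv$ the poset $\Sub Z$ is just the powerset of the underlying set of $Z$, and that pullback functors are computed as ordinary preimages. Under this identification $D_X(r) \in \Sub(X \times X)$ is the subset $\{(x,x') : d_X(x,x') \leq r\}$, and for $f: X \to Y$ the subobject $\underline{f \times f}^{\ast} D_Y(s)$ is the preimage $\{(x,x') : d_Y(f(x), f(x')) \leq s\}$. I would first dispatch the characterizations (b), which are identical in form across all three variants, and then treat the existence statements (a), which are where the three moduloids of Example \ref{ex:moduloids} actually differ.

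For the (b) statements, fix a modulus $\epsilon$ in the relevant moduloid. Unwinding the identification above, the inequality $D_X(r) \leq \underline{f \times f}^{\ast} D_Y(\epsilon(r))$ in $\Sub(X \times X)$ is exactly the implication that for all $x, x'$, if $d_X(x,x') \leq r$ then $d_Y(f(x), f(x')) \leq \epsilon(r)$. I would then argue the equivalence with the pointwise condition $d_Y(f(x), f(x')) \leq \epsilon(d_X(x,x'))$ in two short steps. For the forward direction, assuming the modulus condition, whenever $d_X(x,x') \leq r$ monotonicity of $\epsilon$ (which holds since any $\mathbb{I}$-modulus is a functor $\mathbb{I} \to \mathbb{I}$, Definition \ref{def:genmoduli}) gives $d_Y(f(x), f(x')) \leq \epsilon(d_X(x,x')) \leq \epsilon(r)$, so the inequality holds for every $r$. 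For the reverse direction, assuming the whole family of inequalities, I specialize to $r = d_X(x,x')$ for each pair $x, x'$; then $(x,x') \in D_X(r)$ trivially, so the hypothesis forces $d_Y(f(x), f(x')) \leq \epsilon(d_X(x,x'))$, which is the modulus condition. This single argument covers (1b), (2b) and (3b) at once, the last being the specialization $\epsilon = 1_{\mathbb{I}}$.

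For the existence statements (a), the content is to produce, for a morphism $f$ of the given variant, a modulus of continuity lying in the prescribed moduloid. In $\pmetl$ one takes a Lipschitz constant $K$ for $f$ (replacing $K$ by $\max(K,1)$ so that $K \geq 1$) and sets $\epsilon(r) = \min(Kr, 1)$; this is the element $\epsilon_K \in E_L$, it is a genuine $\mathbb{I}$-modulus, and since $d_Y(f(x), f(x')) \leq \min(K\, d_X(x,x'), 1)$ using the diameter bound it is a modulus of continuity for $f$. In $\pmet$ the maps are exactly the $1$-Lipschitz maps, so $d_Y(f(x),f(x')) \leq d_X(x,x') = 1_{\mathbb{I}}(d_X(x,x'))$ and the unique element $1_{\mathbb{I}}$ of $E_1$ works immediately. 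The case (1a) requires one more idea: I would form the optimal modulus $\omega_f(r) = \sup\{ d_Y(f(x),f(x')) : d_X(x,x') \leq r\}$, which is increasing, bounded by $1$, and—by uniform continuity of $f$—tends to $0$ as $r \to 0^+$.

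The main obstacle, such as it is, sits in (1a): the optimal modulus $\omega_f$ need not preserve infima (equivalently, need not be right-continuous), so it need not itself be an $\mathbb{I}$-modulus. I would remedy this by passing to its right-continuous regularization $\epsilon(r) = \inf_{s > r} \omega_f(s)$. This $\epsilon$ dominates $\omega_f$ (hence remains a modulus of continuity for $f$ by the sup estimate), is increasing, satisfies $\epsilon(0) = 0$ because $\omega_f(s) \to 0$ as $s \to 0^+$, and is right-continuous by construction, so it preserves infima and is therefore a genuine $\mathbb{I}$-modulus, i.e. an element of $E_u$. Everything else is the routine verification that the maximum-metric product of Proposition \ref{prop:metricprops} and the powerset identification of Proposition \ref{prop:metheyt} behave as claimed.
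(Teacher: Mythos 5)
Your proposal is correct and follows essentially the same route as the paper, whose proof is just a direct unfolding of the definitions together with the pointwise characterization ``$d_X(x,y) \leq r \Rightarrow d_Y(f(x),f(y)) \leq \epsilon(r)$'' — exactly the two-step monotonicity/specialization argument you give for the (b) parts. Your right-continuous regularization $\epsilon(r) = \inf_{s>r}\omega_f(s)$ in (1a) is sound and is a nice piece of extra care, though it is not needed under the paper's conventions, since morphisms of $\pmetu$ are there \emph{defined} as set functions already equipped with an increasing, infima-preserving modulus with $\epsilon(0)=0$, which is verbatim the definition of an object of $E_u$.
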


\begin{proof}

These follow easily from the definitions of $\pmetu$, $\pmetl$, $\pmet$ and the $\mathbbm{I}$-moduloids $E_u$, $E_L$, and $E_1$, as well as the statement that $f:X \rightarrow Y$ is continuous with respect to a modulus $\epsilon: \mathbbm{I} \rightarrow \mathbbm{I}$ iff for all $r \in \mathbbm{I}$, $d_X(x,y) \leq r \Longrightarrow d_Y(f(x),f(y)) \leq \epsilon(r)$.

\end{proof}

As we did with taking $\pmetv = \pmetu$, we make the choice of $\mathbb{I}$-moduloid $E = E_u$ just to be precise, while keeping in mind that most of what follows works equally well if we choose ($\pmetv = \pmetl$ and) $E = E_L$ or ($\pmetv = \pmet$ and) $E = E_1$ instead. Thus when we speak of $E$ without mentioning $E_u$ specifically, or when we speak of ``an $\mathbbm{I}$-modulus $\epsilon: \mathbbm{I} \rightarrow \mathbbm{I}$'' without specifying $\epsilon \in E_u$ specifically, it should be understood that everything applies equally well for $E_L$ and $E_1$. In the cases where the choice of $E$ makes a difference, we will say so explicitly.

\begin{proposition}
\label{prop:formalpfs}

Only the formal properties - in fact, just (\ref{prodmetric}) - of the metric listed in Proposition \ref{prop:metricprops} are already sufficient to imply the following properties of $\pmetv$:

\begin{enumerate}

\item For $X \xrightarrow{f} Y \xrightarrow{g} Z$ such that $f$ and $g$ have moduli $\epsilon_f$ and $\epsilon_g$ respectively, we have that $gf: X \rightarrow Z$ has modulus $\epsilon_g \circ \epsilon_f$.\label{compmodulus}

\item For any $X, Y$, $\pi_X: X \times Y \rightarrow X$ satisfies
\begin{equation*}
D_{X \times Y} (r) \leq \underline{\pi_X \times \pi_X}^{\ast} D_X (r)
\end{equation*}
for all $r \in \mathbb{R}$.\label{projmodulus}

\item Each pair of maps $f: X \rightarrow Y$, $g: X \rightarrow Z$ (with moduli $\epsilon_f$ and $\epsilon_g$ respectively) canonically corresponds to the obvious map $(f,g): X \rightarrow (Y \times Z)$, with modulus $\epsilon_{(f,g)} = \max (\epsilon_f, \epsilon_g)$.

In the other direction, if $\epsilon_{(f,g)}$ is a modulus for $(f,g)$ then it is also a modulus for both $f$ and $g$.\label{prodmodulus}

\end{enumerate}

\end{proposition}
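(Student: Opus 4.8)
The plan is to reduce each claim to the inequality characterization of ``having modulus $\epsilon$'' furnished by Lemma \ref{lem:continuity}: a map $f \colon X \to Y$ has modulus $\epsilon$ precisely when $D_X(r) \leq \underline{f \times f}^{\ast} D_Y(\epsilon(r))$ for all $r$. Once everything is phrased this way, the arguments become a purely formal manipulation of inequalities in the lattices $\Sub (X \times X)$, using only three tools: that each pullback functor $\underline{(-)}^{\ast}$ is monotone, contravariantly functorial (so $\underline{gf}^{\ast} = \underline{f}^{\ast}\,\underline{g}^{\ast}$), and meet-preserving (being a right adjoint, by Proposition \ref{prop:heytadjstopb}); that each $D_X$ is monotone in $r$; and property (\ref{prodmetric}) of Proposition \ref{prop:metricprops}. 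No metric computation is required, which is precisely the content being asserted.

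For (\ref{compmodulus}) I would begin from $D_X(r) \leq \underline{f \times f}^{\ast} D_Y(\epsilon_f(r))$, apply the continuity hypothesis for $g$ at $s = \epsilon_f(r)$ to get $D_Y(\epsilon_f(r)) \leq \underline{g \times g}^{\ast} D_Z(\epsilon_g \epsilon_f(r))$, push this through the monotone functor $\underline{f \times f}^{\ast}$, and rewrite $\underline{f \times f}^{\ast}\,\underline{g \times g}^{\ast}$ as $\underline{(gf) \times (gf)}^{\ast}$ via functoriality together with the interchange identity $(g \times g)(f \times f) = (gf) \times (gf)$; chaining the two inequalities gives the claim. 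Part (\ref{projmodulus}) is then immediate: by (\ref{prodmetric}), after the evident associativity identification of $(X \times Y) \times (X \times Y)$ with $X \times Y \times X \times Y$, we have $D_{X \times Y}(r) = \underline{\pi_X \times \pi_X}^{\ast} D_X(r) \wedge \underline{\pi_{Y \times Y}}^{\ast} D_Y(r)$, and the desired bound is just that a meet lies below each of its factors. In particular this shows each projection has modulus $1_{\mathbb{I}}$.

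For (\ref{prodmodulus}), write $\epsilon = \max(\epsilon_f, \epsilon_g)$. First I would expand $D_{Y \times Z}(\epsilon(r))$ by (\ref{prodmetric}), pull it back along $(f,g) \times (f,g)$, and use meet-preservation to split it into two factors; the identities $\pi_{Y \times Y} \circ ((f,g) \times (f,g)) = f \times f$ and $\pi_{Z \times Z} \circ ((f,g) \times (f,g)) = g \times g$ then collapse these, via functoriality, to $\underline{f \times f}^{\ast} D_Y(\epsilon(r)) \wedge \underline{g \times g}^{\ast} D_Z(\epsilon(r))$. Since $\epsilon_f(r), \epsilon_g(r) \leq \epsilon(r)$ and $D_Y, D_Z$ are monotone, the modulus hypotheses on $f$ and $g$ place $D_X(r)$ below each factor, hence below their meet; this is exactly the statement that $(f,g)$ has modulus $\epsilon$. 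The converse direction needs no new work: $f = \pi_Y \circ (f,g)$ with $\pi_Y$ of modulus $1_{\mathbb{I}}$ by (\ref{projmodulus}), so (\ref{compmodulus}) shows that any modulus $\epsilon_{(f,g)}$ of $(f,g)$ is also a modulus for $f$, and symmetrically for $g$.

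The lattice-theoretic chase itself is routine; the only step demanding genuine care is the coherent bookkeeping of the various fourfold products and the verification that composing a product projection with $(f,g) \times (f,g)$ (or with $\pi_X \times \pi_X$) returns exactly $f \times f$, $g \times g$, or $\pi_{X \times X}$. This is what the clause ``well-behaved \ldots\ with respect to symmetry and associativity'' in Proposition \ref{prop:metricprops} is present to license, and keeping these identifications straight is the main thing one must not be sloppy about.
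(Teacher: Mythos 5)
Your proposal is correct and follows essentially the same route as the paper's proof: each part is reduced via Lemma \ref{lem:continuity} to a lattice inequality in $\Sub(X \times X)$, and then (\ref{compmodulus}) is a two-step chain using monotonicity and functoriality of pullback, (\ref{projmodulus}) is ``a meet lies below each factor'' applied to property (\ref{prodmetric}), and (\ref{prodmodulus}) is the same expansion of $D_{Y \times Z}$ pulled back along $(f,g) \times (f,g)$. The only (harmless) divergence is in the converse of (\ref{prodmodulus}), where you compose with the projections and invoke (\ref{compmodulus}) and (\ref{projmodulus}), whereas the paper simply reads its chain of identities backwards; both are valid.
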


\begin{proof}

(\ref{compmodulus}): $D_X(r) \leq \underline{f \times f}^{\ast} D_Y(\epsilon_f (r)) \leq \underline{f \times f}^{\ast} \underline{g \times g}^{\ast} D_Z(\epsilon_g (\epsilon_f (r))) = \underline {gf \times gf} ^{\ast} D_Z ( \epsilon_g \circ \epsilon_f (r) )$.

(\ref{projmodulus}): $\pi_X \times \pi_X: X \times Y \times X \times Y \rightarrow X \times X$ is just $\pi_{X \times X}$. We have
\begin{equation*}
D_{X \times Y} (r) = \underline{\pi_{X \times X}}^{\ast} D_X(r) \wedge \underline{\pi_{Y \times Y}}^{\ast} D_Y(r) \leq \underline{\pi_{X \times X}}^\ast D_X (r) = \underline{\pi_X \times \pi_X}^{\ast} D_X (r).
\end{equation*}

(\ref{prodmodulus}): Ignoring moduli, there is obviously a correspondence between pairs of maps $f: X \rightarrow Y$, $g: X \rightarrow Z$ and $(f,g): X \rightarrow (Y \times Z)$. Now assume that $D_X(r) \leq \underline{f \times f}^{\ast} D_Y ( \epsilon_f (r) )$ and $D_X(r) \leq \underline{g \times g}^{\ast} D_Z ( \epsilon_g (r) )$ for all $r \in \mathbb{R}$.

We want to show
\begin{equation*}
D_X (r) \leq \underline {(f, g) \times (f, g)}^{\ast} D_{Y \times Z} (\epsilon_{(f,g)} (r) )
\end{equation*}
where $\epsilon_{(f,g)} = \max (\epsilon_f, \epsilon_g)$. For convenience denote $\epsilon_{(f,g)}(r) = s$.
\begin{equation*}
\underline {(f, g) \times (f, g)}^{\ast} D_{Y \times Z} ( s ) = \underline {(f, g) \times (f, g)}^{\ast} \left ( \underline{ \pi_{Y \times Y}}^{\ast} D_Y(s) \wedge \underline{ \pi_{Z \times Z} }^{\ast} D_Z (s) \right )
\end{equation*}
We have $f = \pi_Y (f,g)$ and $g = \pi_Z (f,g)$ and $\pi_{Y \times Y} = \pi_Y \times \pi_Y$ (resp. $\pi_{Z \times Z} = \pi_Z \times \pi_Z$), so the above turns into
\begin{equation*}
\underline {(f, g) \times (f, g)}^{\ast} \left ( \underline{ \pi_{Y \times Y}}^{\ast} D_Y(s) \wedge \underline{ \pi_{Z \times Z} }^{\ast} D_Z (s) \right ) = (\underline{f \times f}^{\ast} D_Y (s) ) \wedge (\underline{g \times g}^{\ast} D_Z (s) )
\end{equation*}
but $D_X(r) \leq (\underline{f \times f}^{\ast} D_Y (s) ) \wedge (\underline{g \times g}^{\ast} D_Z (s) )$.

The same proof shows that in the other direction (i.e. given $(f,g)$ with modulus $\epsilon_{(f,g)}$), we can take $\epsilon_f = \epsilon_g = \epsilon_{(f,g)}$.

\end{proof}

In the case that $Y = | \mathbb{I} | = [0,1]$, with $D_{|\mathbb{I}|}$ corresponding to the obvious metric on $|\mathbb{I}|$, we can specify the continuity of a map $f: X \rightarrow Y$ in terms of (the metric on $X$ and) indexed subobjects of $X$. The rest of this subsection is motivated by the following key observation:

\begin{lemma}
\label{lem:iff}

Let $\mct_{\mathbb{I}} \in [\Iop, \Sub |\mathbb{I}|]$ be defined by $\mct_{\mathbb{I}}(r) = [0,r]$, and let $X \in \pmetv$. Let $\pi_1, \pi_2: (X \times X) \rightarrow X$ denote the projection onto the first and second factors, respectively.

A map $f: X \rightarrow |\mathbb{I}|$ is continuous with respect to a modulus $\epsilon: \mathbb{I} \rightarrow \mathbb{I}$ iff for all $r, s \in \mathbb{R}$, we have that
\begin{equation*}
\left ( \underline{\pi_1}^{\ast} \underline{f}^{\ast} \mct_{\mathbb{I}} (r) \right ) \wedge D_X (s) \leq \underline{\pi_2}^{\ast} \underline{f}^\ast \mct_{\mathbb{I}} (r + \epsilon(s)).
\end{equation*}

\end{lemma}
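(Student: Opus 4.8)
The plan is to reduce the categorical inequality to an elementary pointwise implication and then check the equivalence by hand. Recall from Proposition \ref{prop:metheyt} that for every $Z \in \pmetv$ the poset $\Sub Z$ is (naturally isomorphic to) the powerset of the underlying set of $Z$, with meets given by intersection and with pullback along a map computed as set-theoretic preimage. Under this dictionary I would first unwind each subobject appearing in the statement. Since $\mct_{\mathbb{I}}(r) = [0,r]$, we have $\underline{f}^\ast \mct_{\mathbb{I}}(r) = \{x \in X \mid f(x) \leq r\}$, hence
\begin{equation*}
\underline{\pi_1}^\ast \underline{f}^\ast \mct_{\mathbb{I}}(r) = \{(x,y) \in X \times X \mid f(x) \leq r\}, \qquad \underline{\pi_2}^\ast \underline{f}^\ast \mct_{\mathbb{I}}(r + \epsilon(s)) = \{(x,y) \mid f(y) \leq r + \epsilon(s)\},
\end{equation*}
while $D_X(s) = \{(x,y) \mid d_X(x,y) \leq s\}$ by definition. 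The displayed inequality in $\Sub(X \times X)$ therefore unwinds to the assertion that for all $r,s$ and all $(x,y)$,
\begin{equation*}
\big(f(x) \leq r \ \text{and}\ d_X(x,y) \leq s\big) \implies f(y) \leq r + \epsilon(s). \tag{$\dagger$}
\end{equation*}

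On the other side, by the reformulation of continuity used in the proof of Lemma \ref{lem:continuity}, $f$ is continuous with modulus $\epsilon$ exactly when for all $x,y$ and all $s$ one has $d_X(x,y) \leq s \implies |f(x) - f(y)| \leq \epsilon(s)$, where the metric on $|\mathbb{I}|$ is the usual absolute-value distance. Thus the whole lemma comes down to proving that $(\dagger)$ is equivalent to this last condition.

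For the forward direction I would assume $\epsilon$-continuity and fix $r, s, (x,y)$ with $f(x) \leq r$ and $d_X(x,y) \leq s$; then $|f(x) - f(y)| \leq \epsilon(s)$ gives $f(y) \leq f(x) + \epsilon(s) \leq r + \epsilon(s)$, which is $(\dagger)$. For the converse, the \emph{key move} is to feed $(\dagger)$ its sharpest input and to use the symmetry of $d_X$: fixing $x,y,s$ with $d_X(x,y) \leq s$, taking $r = f(x)$ in $(\dagger)$ for the pair $(x,y)$ yields $f(y) - f(x) \leq \epsilon(s)$, and applying $(\dagger)$ to the pair $(y,x)$ with $r = f(y)$ (legitimate since $(\dagger)$ ranges over all pairs and $d_X(y,x) = d_X(x,y)$) yields $f(x) - f(y) \leq \epsilon(s)$; together these give $|f(x) - f(y)| \leq \epsilon(s)$.

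The only delicate points, which I would flag explicitly, are that recovering the two-sided bound $|f(x)-f(y)| \leq \epsilon(s)$ from the one-sided conclusion of $(\dagger)$ genuinely requires both the freedom to choose $r$ equal to the relevant value of $f$ and an appeal to the symmetry of the pseudometric, and that the truncated addition in $\mathbb{I}$ causes no trouble because $f$ lands in $[0,1]$, so $f(y) \leq r + \epsilon(s)$ as real numbers already forces $f(y) \leq \min(r + \epsilon(s), 1)$. Everything else is a mechanical application of the subobject/pullback dictionary, so I do not expect any real obstacle here.
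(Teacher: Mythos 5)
Your proposal is correct and follows essentially the same route as the paper: unwind the subobject inequality to the pointwise implication $(\dagger)$, observe the forward direction is immediate, and for the converse exploit the symmetry of $d_X$ to upgrade the one-sided bound to $|f(x)-f(y)| \leq \epsilon(s)$ (the paper phrases this as ``without loss of generality $f(x) \leq f(y)$,'' which is the same symmetry argument you make explicit by applying $(\dagger)$ to both $(x,y)$ and $(y,x)$). Your additional remarks on choosing $r = f(x)$ sharply and on the harmlessness of truncated addition are accurate and, if anything, slightly more careful than the paper's own write-up.
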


\begin{proof}

Unpacking the left hand side, $\left ( \underline{\pi_1}^{\ast} \underline{f}^{\ast} \mct_{\mathbb{I}} (r) \right ) \wedge D_X (s)$ consists of all pairs $(x, y) \in X \times X$ such that $f(x) \leq r$ and $d_X(x, y) \leq s$. The right hand side is the set of all pairs $(x,y) \in X \times X$ such that $f(y) \leq r + \epsilon(s)$.

Continuity of $f: X \rightarrow |\mathbb{I}|$ with respect to a modulus $\epsilon: \mathbb{I} \rightarrow \mathbb{I}$ obviously implies the inequality.

Conversely, assume that the inequality holds, i.e. that for all $x,y \in X$ and $r, s \in \mathbb{I}$, whenever $f(x) \leq r$ and $d_X(x,y) \leq s$, we also have $f(y) \leq r + \epsilon(s)$.

Let $s \in \mathbb{I}$ be arbitrary, and let $x, y \in X$ be any pair of points satisfying $d_X(x,y) \leq s$. Without loss of generality, let $r = f(x) \leq f(y)$. By assumption we have that $r \leq f(y) \leq r+\epsilon(s)$, therefore the distance between $f(x)$ and $f(y)$ is bounded above by $\epsilon(s)$.

\end{proof}

\subsection{Continuous subobjects}

The preceding lemma suggests that the following is the appropriate notion of ``continuous subobject''.

\begin{definition}
\label{def:econtsubobj}

Let $X \in \pmetv$, and let $\epsilon: \mathbb{I} \rightarrow \mathbb{I}$ be an $\mathbbm{I}$-modulus.

We call $R \in [\Iop, \Sub X]$ an \emph{$\epsilon$-predicate on $X$} when:

\begin{enumerate}

\item Given $r, r_i \in \mathbb{I}$ such that $r = \inf\limits_i r_i$, $R(r) = \bigwedge\limits_i R(r_i)$, and;

\item For each $r,s \in \mathbb{I}$,
\begin{equation*}
\left ( \underline{\pi_1}^{\ast} R (r) \right ) \wedge D_X (s) \leq \underline{\pi_2}^{\ast} R (r + \epsilon(s))
\end{equation*}

\end{enumerate}

We denote by $\Sube X$ the full subcategory of $[\Iop, \Sub X]$ on the $\epsilon$-predicates.

Let us call $R \in [\Iop, \Sub X]$ a \emph{predicate on $X$} when there exists some $\epsilon \in E$ for which $R$ is an $\epsilon$-predicate.

\end{definition}

Clearly, for $\epsilon_1 \leq \epsilon_2$ any $\epsilon_1$-predicate is also an $\epsilon_2$-predicate, so we have a full inclusion $\Subeo X \hookrightarrow \Subet X$.

The point of Definition \ref{def:econtsubobj} above is to consider $\epsilon$-predicates on $X$ as an intrinsic notion of a uniformly continuous map $X \rightarrow |\mathbb{I}|$ with modulus $\epsilon$. Now given a map $f: X \rightarrow Y$ we have that the pullback functor $\underline{f}^\ast: \Sub Y \rightarrow \Sub X$ acts pointwise to give a functor $\overline{f}^\ast: [\Iop, \Sub Y] \rightarrow [\Iop, \Sub X]$, so that by definition $(\overline{f}^\ast R) (r) = \underline{f}^\ast \left ( R (r) \right )$; we are to think of this as precomposition by $f$.

Thus if we have $R \in \Sube Y$ and a map $f: X \rightarrow Y$ with modulus $\epsilon_f$, we should hope that $\overline{f}^\ast R$ (where $\overline{f}^\ast$ acts on $R$ considered as an object of $[\Iop, \Sub X]$) is an $(\epsilon \circ \epsilon_f)$-predicate, which we now show:

\begin{proposition}
\label{prop:pbofcontsubobj}

Given $f: X \rightarrow Y$ with modulus of continuity $\epsilon_f$, and given $R \in [\Iop, \Sub Y]$ which is an $\epsilon$-predicate, we have that $\overline{f}^\ast R \in [\Iop, \Sub X]$ is an $(\epsilon \circ \epsilon_f)$-predicate.

In particular, $\overline{f}^\ast: [\Iop, \Sub Y] \rightarrow [\Iop, \Sub X]$ descends to a functor $f^\ast: \Sube Y \rightarrow \Subeef X$ for which $i_{\epsilon \circ \epsilon_f} f^{\ast} = \overline{f}^{\ast} i_{\epsilon}$.

\end{proposition}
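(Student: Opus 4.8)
The plan is to verify directly that $\overline{f}^\ast R$ satisfies the two conditions of Definition \ref{def:econtsubobj} with modulus $\epsilon \circ \epsilon_f$, and then to observe that the ``in particular'' clause is purely formal. Recall that $(\overline{f}^\ast R)(r) = \underline{f}^\ast(R(r))$ by definition. For condition (1), suppose $r = \inf_i r_i$. Since $R$ is an $\epsilon$-predicate we have $R(r) = \bigwedge_i R(r_i)$, and applying $\underline{f}^\ast$ gives $(\overline{f}^\ast R)(r) = \underline{f}^\ast \bigwedge_i R(r_i)$. It therefore suffices that $\underline{f}^\ast$ preserve arbitrary meets; this holds because $\pmetv$ is r-geometric (Proposition \ref{prop:metheyt}), so pulling back preserves the complete-lattice structure of $\Sub$, or equivalently because $\underline{f}^\ast$ is a right adjoint (to $\underline{\exists}_f$). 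Hence $(\overline{f}^\ast R)(r) = \bigwedge_i (\overline{f}^\ast R)(r_i)$.

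The substance is condition (2), and the key preliminary is a commutation relation between pullback along $f$ and the projections. Writing $\pi_1, \pi_2 : X \times X \to X$ and $\pi_1^Y, \pi_2^Y : Y \times Y \to Y$ for the projections, the square with top arrow $f \times f$, bottom arrow $f$, left arrow $\pi_i$ and right arrow $\pi_i^Y$ commutes for each $i$, i.e. $f \circ \pi_i = \pi_i^Y \circ (f \times f)$; contravariant functoriality of pullback then yields $\underline{\pi_i}^{\ast} \underline{f}^{\ast} = \underline{f \times f}^{\ast} \underline{\pi_i^Y}^{\ast}$ for $i = 1, 2$.

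With this in hand I would chain inequalities as follows. Starting from $\underline{\pi_1}^{\ast} \underline{f}^{\ast} R(r) \wedge D_X(s)$, rewrite the first factor as $\underline{f \times f}^{\ast} \underline{\pi_1^Y}^{\ast} R(r)$; bound $D_X(s)$ above by $\underline{f \times f}^{\ast} D_Y(\epsilon_f(s))$ using the continuity of $f$ (Lemma \ref{lem:continuity}); use that $\underline{f \times f}^{\ast}$ preserves finite meets (again r-geometric, or right adjointness) to pull the meet inside, obtaining $\underline{f \times f}^{\ast}\bigl(\underline{\pi_1^Y}^{\ast} R(r) \wedge D_Y(\epsilon_f(s))\bigr)$; apply the $\epsilon$-predicate inequality for $R$ on $Y$ with the value $\epsilon_f(s)$ in place of $s$, namely $\underline{\pi_1^Y}^{\ast} R(r) \wedge D_Y(\epsilon_f(s)) \leq \underline{\pi_2^Y}^{\ast} R(r + \epsilon(\epsilon_f(s)))$; and finally apply the monotone functor $\underline{f \times f}^{\ast}$ and convert back via the commutation relation. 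The result is $\underline{\pi_1}^{\ast} \underline{f}^{\ast} R(r) \wedge D_X(s) \leq \underline{\pi_2}^{\ast} \underline{f}^{\ast} R(r + (\epsilon \circ \epsilon_f)(s))$, which is exactly condition (2) for modulus $\epsilon \circ \epsilon_f$.

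The remaining claim is bookkeeping: since $\overline{f}^\ast$ carries every $\epsilon$-predicate on $Y$ to an $(\epsilon \circ \epsilon_f)$-predicate on $X$, it corestricts along the full inclusions $i_\epsilon : \Sube Y \hookrightarrow [\Iop, \Sub Y]$ and $i_{\epsilon \circ \epsilon_f} : \Subeef X \hookrightarrow [\Iop, \Sub X]$ to a functor $f^\ast$, and the identity $i_{\epsilon \circ \epsilon_f} f^\ast = \overline{f}^\ast i_\epsilon$ holds by construction. The main obstacle I anticipate is notational rather than conceptual: correctly tracking the commutation $\underline{\pi_i}^{\ast} \underline{f}^{\ast} = \underline{f \times f}^{\ast} \underline{\pi_i^Y}^{\ast}$ and ensuring the modulus accumulates as $\epsilon \circ \epsilon_f$ rather than $\epsilon_f \circ \epsilon$, since the inner modulus $\epsilon_f$ governs the metric argument $s$ while the outer $\epsilon$ governs the predicate values.
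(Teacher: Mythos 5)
Your proposal is correct and follows essentially the same route as the paper: meet-preservation of $\underline{f}^\ast$ handles condition (1), and condition (2) is exactly the paper's chain of inequalities (rewrite via $\underline{\pi_i}^{\ast}\underline{f}^{\ast} = \underline{f\times f}^{\ast}\underline{\pi_i}^{\ast}$, bound $D_X(s)$ by $\underline{f\times f}^{\ast}D_Y(\epsilon_f(s))$, apply the $\epsilon$-predicate inequality for $R$, and convert back). The ``in particular'' clause is treated as the same formal corestriction in both.
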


\begin{proof}

For $r, r_i \in \mathbb{R}$ with $r = \inf\limits_i r_i$, $\overline{f}^{\ast}R (r) = \bigwedge\limits_i \overline{f}^{\ast} R(r_i)$ since $\overline{f}^{\ast}$ preserves meets.

For any $r,s \in \mathbb{R}$,
\begin{align*}
\left ( \underline{\pi_1}^{\ast} \underline{f}^{\ast} R (r) \right ) \wedge D_X (s)
	&\leq \left ( \underline{\pi_1}^{\ast} \underline{f}^{\ast} R (r) \right ) \wedge \underline{f \times f}^{\ast} D_Y (\epsilon_f(s) ) \\
	& = \left ( \underline{f \times f}^{\ast} \underline{\pi_1}^{\ast} R (r) \right ) \wedge \underline{f \times f}^{\ast} D_Y (\epsilon_f(s) ) \\
	& \leq \underline{f \times f}^{\ast} \underline{\pi_2}^{\ast} R(r + \epsilon ( \epsilon_f (s) ) ) \\
	& = \underline{\pi_2}^{\ast} \underline{f}^{\ast} R (r + \epsilon \circ \epsilon_f (s) ).
\end{align*}

\end{proof}

\begin{example}
\label{ex:metricandtrvals}



$\mct_{\mathbb{I}} \in [\Iop, \Sub |\mathbb{I}|]$ as defined in Lemma \ref{lem:iff}, i.e. $\mct_{\mathbb{I}}(r) = [0,r]$, is a $1_{\mathbb{I}}$-predicate.


\end{example}

We give a more substantial example of predicates, namely, $D_X$ itself is a predicate on $X$ for each $X \in \pmetu$ or $X \in \pmetl$. For this we must make use of a condition on our set $E$ of moduli (which holds in $E_u$ and $E_L$, but not $E_1$). The monoidal product $\otimes$ of $\mcv$ is inherited by the functor category $[\mcv, \mcv]$ just by pointwise application, and so we can ask that our set $E$ of moduli, in addition to being closed under composition, also be closed under $\otimes$, i.e. that if $\epsilon_1, \epsilon_2 \in E$ then $\epsilon_1 \otimes \epsilon_2 \in E$; for now we stick to $\mcv = \mathbb{I}$ for concreteness, so in this case $\otimes$ is just $+$.

The reason that this condition is desirable is that for any $X$, we have that $D_X \in [\Iop, \Sub (X \times X)]$ is in general a $(1_{\mathbb{I}} + 1_{\mathbb{I}})$-predicate on $X \times X$:

\begin{proposition}
\label{prop:metricaspred}

\item

\begin{enumerate}

\item $D_X$ is a $(1_{\mathbb{I}} + 1_{\mathbb{I}})$-predicate on $X \times X$.\label{metricas2pred}

\item Let $f,g: X \rightarrow Y$ be maps with moduli $\epsilon_f$, $\epsilon_g$ respectively. Then $(f,g)^{\ast} D_Y$ is an $(\epsilon_f + \epsilon_g)$-predicate on $X$.\label{fgmetricaspred}

\end{enumerate}

\end{proposition}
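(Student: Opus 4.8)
The plan is to verify the two clauses of Definition \ref{def:econtsubobj} in each case, but first to observe that (\ref{metricas2pred}) is the special case of (\ref{fgmetricaspred}) obtained by taking $Y = X$, $f = \pi_1$, $g = \pi_2$: then $(f,g) = \mathrm{id}_{X \times X}$, so $(f,g)^{\ast} D_X = D_X$ as an object of $[\Iop, \Sub (X\times X)]$, and both projections have modulus $1_{\mathbb{I}}$ by Proposition \ref{prop:formalpfs}(\ref{projmodulus}), so that $\epsilon_f + \epsilon_g = 1_{\mathbb{I}} + 1_{\mathbb{I}}$. Thus I would prove (\ref{fgmetricaspred}) and read off (\ref{metricas2pred}). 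The first clause (preservation of infima) is immediate: writing $R = (f,g)^{\ast} D_Y$ we have $R(r) = \underline{(f,g)}^{\ast} D_Y(r)$, and since $D_Y$ preserves infima by Proposition \ref{prop:metricprops}(4) while $\underline{(f,g)}^{\ast}$ preserves meets, $R$ preserves infima as well.

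For the second clause I would route everything through a single ambient space $Y^4 = Y \times Y \times Y \times Y$ carrying the four relevant copies of $Y$, via the map $\phi := (f\pi_1, g\pi_1, f\pi_2, g\pi_2) : X \times X \rightarrow Y^4$ and the projections $\rho_{i,j}: Y^4 \rightarrow Y \times Y$ onto the $i$-th and $j$-th factors. The two $R$-terms in the clause are exact pullbacks along $\phi$: since $\rho_{1,2}\phi = (f,g)\pi_1$ and $\rho_{3,4}\phi = (f,g)\pi_2$, functoriality gives $\underline{\pi_1}^{\ast} R(r) = \underline{\phi}^{\ast}\underline{\rho_{1,2}}^{\ast} D_Y(r)$ and $\underline{\pi_2}^{\ast} R(t) = \underline{\phi}^{\ast}\underline{\rho_{3,4}}^{\ast} D_Y(t)$. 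The metric term is then bounded \emph{separately} for $f$ and $g$: because $\rho_{1,3}\phi = f \times f$ and $\rho_{2,4}\phi = g \times g$, continuity of $f$ and of $g$ (Lemma \ref{lem:continuity}) yields $D_X(s) \leq \underline{\phi}^{\ast}\underline{\rho_{1,3}}^{\ast} D_Y(\epsilon_f(s))$ and $D_X(s) \leq \underline{\phi}^{\ast}\underline{\rho_{2,4}}^{\ast} D_Y(\epsilon_g(s))$.

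Meeting these facts, the left-hand side $\underline{\pi_1}^{\ast} R(r) \wedge D_X(s)$ of clause (2) is bounded by $\underline{\phi}^{\ast}$ applied to
\begin{equation*}
\underline{\rho_{1,2}}^{\ast} D_Y(r) \wedge \underline{\rho_{1,3}}^{\ast} D_Y(\epsilon_f(s)) \wedge \underline{\rho_{2,4}}^{\ast} D_Y(\epsilon_g(s)).
\end{equation*}
Two applications of the triangle inequality (Proposition \ref{prop:metricprops}(3), pulled back from $Y^3$ to $Y^4$) together with symmetry (Proposition \ref{prop:metricprops}(2)) finish the estimate: combining the first two factors through factor $1$ gives $\underline{\rho_{2,3}}^{\ast} D_Y(r + \epsilon_f(s))$, and combining that with the third factor through factor $2$ gives $\underline{\rho_{3,4}}^{\ast} D_Y(r + \epsilon_f(s) + \epsilon_g(s))$. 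Applying $\underline{\phi}^{\ast}$ returns $\underline{\pi_2}^{\ast} R\bigl(r + (\epsilon_f + \epsilon_g)(s)\bigr)$, which is exactly clause (2).

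The subtle point — and the reason I would \emph{not} simply quote Proposition \ref{prop:pbofcontsubobj} — is the sharpness of the modulus. Pulling $D_Y$ (a $(1_{\mathbb{I}}+1_{\mathbb{I}})$-predicate) back along $(f,g): X \rightarrow Y \times Y$ with its modulus $\max(\epsilon_f,\epsilon_g)$ from Proposition \ref{prop:formalpfs}(\ref{prodmodulus}) would only produce a $2\max(\epsilon_f,\epsilon_g)$-predicate, strictly coarser than the asserted $\epsilon_f + \epsilon_g$. The whole purpose of the detour through $Y^4$ is to charge the displacement caused by $f$ and by $g$ to their own moduli independently, so that the two triangle steps contribute $\epsilon_f(s)$ and $\epsilon_g(s)$ rather than twice their maximum; this is the one place that genuinely requires care. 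It remains only to note that $\epsilon_f + \epsilon_g$ is again an $\mathbb{I}$-modulus lying in $E$ — it fixes $0$ and preserves infima since each summand does and $+$ is order-continuous on $[0,1]$ — which is where the standing hypothesis that $E$ is closed under $\otimes$ is used.
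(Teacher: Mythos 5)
Your proof is correct, and the heart of it --- routing everything through $Y^4$ via $(f,g)\times(f,g)$, decomposing $D_{Y\times Y}$ (equivalently, bounding $D_X(s)$ separately by $\underline{f\times f}^{\ast}D_Y(\epsilon_f(s))$ and $\underline{g\times g}^{\ast}D_Y(\epsilon_g(s))$), and then applying the triangle inequality twice so that each leg is charged to its own modulus --- is exactly the computation in the paper's proof of part (2), including your correct diagnosis that quoting Proposition \ref{prop:pbofcontsubobj} would only yield the coarser modulus $2\max(\epsilon_f,\epsilon_g)$. The one organizational difference is the direction of deduction: the paper proves (1) directly on $X^4$ and then redoes the estimate for (2), whereas you prove (2) in a self-contained way and recover (1) as the special case $f=\pi_1$, $g=\pi_2$ (so that $(f,g)=\mathrm{id}_{X\times X}$ and both moduli are $1_{\mathbb{I}}$ by Proposition \ref{prop:formalpfs}). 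This specialization is legitimate and non-circular since your argument for (2) nowhere invokes (1); it buys a slightly shorter write-up at the cost of making (1) --- the statement one actually wants first, that the metric itself is a predicate --- a corollary rather than the base case. Your closing remarks on symmetry (needed to put the shared index in the middle position of the triangle inequality), on pulling the three-factor inequality back to four factors, and on the standing hypothesis that $E$ be closed under $+$ are all points the paper uses implicitly or states in the surrounding text.
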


\begin{proof}

(\ref{metricas2pred}): We automatically have that for any $r, r_i \in \mathbb{I}$ with $r = \inf\limits_i r_i$ $D_X(r) = \bigwedge\limits_i D_X (r_i)$ since $D_X$ is a metric.

We want to show that for all $r, s \in \mathbb{I}$, we have
\begin{equation*}
\underline{\pi_1}^{\ast} D_X (r) \wedge D_{X \times X} (s) \leq \underline{\pi_2}^{\ast} D_X (r + (1_{\mathbb{I}} + 1_{\mathbb{I}}) (s) ) = \underline{\pi_2}^{\ast} D_X (r + s + s).
\end{equation*}
In order to avoid notational confusion, let us denote by $\pi_{i,j}: (X \times X \times X \times X) \rightarrow (X \times X)$ the projection onto the $i^{\text{th}}$ and $j^{\text{th}}$ factors respectively. Then the above becomes
\begin{equation*}
\underline{\pi_{1,2}}^{\ast} D_X (r) \wedge D_{X \times X} (s) \leq \underline{\pi_{3,4}}^{\ast} D_X (r + s + s)
\end{equation*}
Now $D_{X \times X} (s) = \underline{\pi_{1,3}}^{\ast} D_X(s) \wedge \underline{\pi_{2,4}}^{\ast} D_X(s)$ by Proposition \ref{prop:metricprops}, so we need to show
\begin{equation*}
\underline{\pi_{1,2}}^{\ast} D_X (r) \wedge \underline{\pi_{1,3}}^{\ast} D_X(s) \wedge \underline{\pi_{2,4}}^{\ast} D_X(s) \leq \underline{\pi_{3,4}}^{\ast} D_X (r + s + s)
\end{equation*}
But by Proposition \ref{prop:metricprops} again we have
\begin{equation*}
\underline{\pi_{1,2}}^{\ast} D_X (r) \wedge \underline{\pi_{1,3}}^{\ast} D_X(s) \leq \underline{\pi_{2,3}}^{\ast} D_X(r + s)
\end{equation*}
and
\begin{equation*}
\underline{\pi_{2,3}}^{\ast} D_X (r + s) \wedge \underline{\pi_{2,4}}^{\ast} D_X(s) \leq \underline{\pi_{3,4}}^{\ast} D_X (r + s + s)
\end{equation*}
so we are done.

(\ref{fgmetricaspred}): By (\ref{metricas2pred}) we at least have that $(f,g)^{\ast} D_Y$ is a $(\max(\epsilon_f, \epsilon_g) + \max(\epsilon_f, \epsilon_g) )$-predicate on $X$, so it only remains to show that it admits $\epsilon_f + \epsilon_g$ as a modulus of continuity; we thus want
\begin{equation*}
\underline{\pi_1}^{\ast}\underline{(f,g)}^{\ast} D_Y (r) \wedge D_X(s) \leq \underline{\pi_2}^{\ast} \underline{(f,g)}^{\ast} D_Y(r + \epsilon_f (s) + \epsilon_g (s))
\end{equation*}
for all $r, s \in \mathbb{I}$ (recall that $\left ( (f,g)^{\ast}D_Y \right ) (r)$ is just $\underline{(f,g)}^{\ast} \left ( D_Y (r) \right )$).

Now we have the commutative diagrams
\begin{equation*}
\begin{tikzcd}[row sep = 4em, column sep = 4em]
X \times X
\arrow{r}{(f,g) \times (f,g)}
\arrow[shift right]{d}[swap]{\pi_1}
\arrow[shift left]{d}{\pi_2}
	& Y \times Y \times Y \times Y
	\arrow[shift right]{d}[swap]{\pi_{1,2}}
	\arrow[shift left]{d}{\pi_{3,4}} \\
X
\arrow{r}{(f,g)}
	& Y \times Y
\end{tikzcd}
\end{equation*}
so the above is equivalent to showing
\begin{equation*}
\underline{(f,g) \times (f,g)}^{\ast} \underline{\pi_{1,2}}^{\ast} D_Y (r) \wedge D_X(s) \leq \underline{(f,g) \times (f,g)}^{\ast} \underline{\pi_{3,4}}^{\ast} D_Y(r + \epsilon_f (s) + \epsilon_g (s))
\end{equation*}

Now $D_X(s) \leq \underline{f \times f}^{\ast} D_Y (\epsilon_f (s) )$ and $D_X(s) \leq \underline{g \times g}^{\ast} D_Y (\epsilon_g (s) )$, while $f \times f = \pi_{1,3} \circ \left ( (f,g) \times (f,g) \right )$ and $g \times g = \pi_{2,4} \circ \left ( (f,g) \times (f,g) \right )$.

We thus have
\begin{align*}
\underline{(f,g) \times (f,g)}^{\ast} \underline{\pi_{1,2}}^{\ast} D_Y (r) &\wedge D_X(s) \\
\leq \underline{(f,g) \times (f,g)}^{\ast} \underline{\pi_{1,2}}^{\ast} D_Y (r) &\wedge \underline{(f,g) \times (f,g)}^{\ast} \underline{\pi_{1,3}}^{\ast} D_Y(\epsilon_f (s)) \wedge \underline{(f,g) \times (f,g)}^{\ast} \underline{\pi_{2,4}}^{\ast} D_Y(\epsilon_g (s))
\end{align*}
from which the desired conclusion follows via the same argument as for (\ref{metricas2pred}).

\end{proof}

We are ready to state the main result of this first half of the paper. The theorem below applies equally well to any of the choices for $\pmetv$ ($\pmetu$, $\pmetl$, or $\pmet$), but then only with the corresponding choices of $E$ ($E_u$, $E_L$, or $E_1$ respectively).

\begin{theorem}
\label{thm:classifier}

There is a correspondence between maps $X \rightarrow |\mathbb{I}|$ with modulus of continuity $\epsilon \in E$ and $\epsilon$-predicates on $X \in \pmetv$:

\begin{enumerate}

\item Given $f: X \rightarrow | \mathbb{I} |$ with modulus of continuity $\epsilon \in E$, $R_f = f^\ast \mct_{\mathbb{I}}$ is an $\epsilon$-predicate on $X$, i.e. is an object of $\Sube X$, and; \label{thm:classifier:fntosub}

\item Given $R \in \Sube X$, the function $f_R: X \rightarrow | \mathbb{I} |$ defined by $f_R (x) = \inf \{r \in \mathbb{I} \mid x \in R(r)\}$ (where the infimum is taken in $\Iop$, i.e. the usual ordering of the unit interval) is a uniformly continuous map $f_R: X \rightarrow | \mathbb{I} |$ with the same modulus $\epsilon$.\label{thm:classifier:subtofn}

\end{enumerate}

These operations are inverse to each other, and they are moreover natural in $X$ in the sense that for each $g: Y \rightarrow X$, we have $R_{f \circ g} = g^\ast R_f$ and $f_{g^\ast R} = f_{R} \circ g$.

\end{theorem}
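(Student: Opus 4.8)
The plan is to prove the two assignments $f \mapsto R_f$ and $R \mapsto f_R$ are well-defined as stated, then verify directly that they are mutually inverse and natural in $X$. Direction (\ref{thm:classifier:fntosub}) requires almost no work: Example \ref{ex:metricandtrvals} records that $\mct_{\mathbb{I}}$ is a $1_{\mathbb{I}}$-predicate on $|\mathbb{I}|$, and Proposition \ref{prop:pbofcontsubobj} says that pulling back a $1_{\mathbb{I}}$-predicate along a map of modulus $\epsilon$ yields a $(1_{\mathbb{I}} \circ \epsilon)$-predicate. Since $1_{\mathbb{I}} \circ \epsilon = \epsilon$, the object $R_f = f^\ast \mct_{\mathbb{I}}$ is an $\epsilon$-predicate, i.e. an object of $\Sube X$.

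The substance is in (\ref{thm:classifier:subtofn}). First I would establish the key fact that the infimum defining $f_R$ is \emph{attained}, in the sense that $x \in R(f_R(x))$ for every $x \in X$: writing $f_R(x) = \inf_i r_i$ with $x \in R(r_i)$ for each $i$, condition (1) of Definition \ref{def:econtsubobj} (that $R$ sends infima to meets) gives $x \in \bigwedge_i R(r_i) = R(f_R(x))$. With this in hand, continuity follows by unpacking condition (2) exactly as in Lemma \ref{lem:iff}: if $d_X(x,y) \le s$, then setting $r = f_R(x)$ and using $x \in R(r)$, condition (2) forces $y \in R(r + \epsilon(s))$, whence $f_R(y) \le f_R(x) + \epsilon(s)$; swapping $x$ and $y$ yields $|f_R(x) - f_R(y)| \le \epsilon(s)$, which is precisely uniform continuity of $f_R : X \to |\mathbb{I}|$ with modulus $\epsilon$.

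For the inverse relations I would compute pointwise. Since $R$ is a functor $\Iop \to \Sub X$ it is monotone ($r \le r' \Rightarrow R(r) \subseteq R(r')$), so $R_{f_R}(r) = f_R^\ast \mct_{\mathbb{I}}(r) = \{x \mid f_R(x) \le r\}$ equals $R(r)$: the inclusion $\supseteq$ is immediate from the definition of the infimum, while $\subseteq$ uses $x \in R(f_R(x)) \subseteq R(r)$ via the attainment fact and monotonicity. Conversely $f_{R_f}(x) = \inf\{r \mid f(x) \le r\} = f(x)$ directly. Naturality is then formal: $R_{f \circ g} = (f g)^\ast \mct_{\mathbb{I}} = g^\ast f^\ast \mct_{\mathbb{I}} = g^\ast R_f$ by the (contravariant) functoriality of pullback, and $f_{g^\ast R}(y) = \inf\{r \mid g(y) \in R(r)\} = f_R(g(y))$ directly.

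The main obstacle I anticipate is the attainment lemma $x \in R(f_R(x))$: it is the one place where the meet-preservation axiom (1) of an $\epsilon$-predicate is genuinely needed, and without it $f_R$ would be only a candidate function rather than one whose value is recorded by $R$ itself. Both the correct-modulus continuity of $f_R$ and the identity $R_{f_R} = R$ hinge on it. I would also be careful throughout with the $\mathbb{I}$ versus $\Iop$ orientation, since the infimum in the definition of $f_R$ is the ordinary real infimum whereas the functoriality (hence monotonicity) of $R$ refers to the reversed order.
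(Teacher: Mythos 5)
Your proposal is correct and follows essentially the same route as the paper: part (\ref{thm:classifier:fntosub}) via Proposition \ref{prop:pbofcontsubobj}, the identity $R = (f_R)^\ast \mct_{\mathbb{I}}$ via the meet-preservation axiom (your ``attainment'' fact $x \in R(f_R(x))$ is exactly the paper's argument that $(f_R)^\ast \mct_{\mathbb{I}}(r) \leq R(r)$), and naturality by functoriality of pullback. The only difference is presentational: the paper deduces continuity of $f_R$ by citing Lemma \ref{lem:iff} once $R = (f_R)^\ast \mct_{\mathbb{I}}$ is established, whereas you inline that lemma's proof by unpacking condition (2) directly.
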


\begin{proof}

(\ref{thm:classifier:fntosub}) is immediate by Proposition \ref{prop:pbofcontsubobj} and the observation that $\mct_{\mathbb{I}}$ is a $1_{\mathbb{I}}$-predicate.

(\ref{thm:classifier:subtofn}): By Lemma \ref{lem:iff} it suffices to show that $R = (f_R) ^\ast \mct_{\mathbb{I}}$. Now $(f_R)^\ast \mct_{\mathbb{I}}(r) = f_R^{-1}([0,r]) = \{x \in X \mid f_R(x) \leq r\}$. This last set is the set of all points $x \in X$ such that $\inf \{ s \in \mathbb{I} \mid x \in R(s) \} \leq r$.

Then clearly we must have that $R(r) \leq (f_R)^\ast \mct_{\mathbb{I}}(r)$ in $\Sube X$. Now $x \in (f_R)^\ast \mct_{\mathbb{I}}(r)$, i.e. $f_R(x) \leq r$ iff there is some (weakly) decreasing sequence $\{r_i\}$ converging to $r$ such that $x \in R(r_i)$ for all $i$. By assumption we have $R(r) = R(\inf\limits_i r_i) = \bigwedge\limits_i R(r_i)$ so $x \in R(r)$; thus we also have $(f_R)^\ast \mct_{\mathbb{I}}(r) \leq R(r)$ in $\Sube X$.

To show that these operations are inverse to each other, we first show that $f = f_{R_f}$. For each $x \in X$, we have that $f_{R_f} (x) = \inf \{r \in \mathbb{I} \mid x \in R_f(r)\}$. But each $R_f(r) = f^{-1}([0,r])$, so $f_{R_f}(x)$ is the infimum of all $r \in \mathbb{I}$ such that $f(x) \leq r$, which must thus be the original value $f(x)$. Thus $f = f_{R_f}$.

In the other direction, the proof of (\ref{thm:classifier:subtofn}) above already shows that $R = (f_R)^\ast \mct_{\mathbb{I}}$. 

$R_{f \circ g} = g^\ast R_f$ easily follows from the fact that $g^\ast f^\ast = (f \circ g)^\ast$. Lastly, since
\begin{equation*}
R_{f_{g^\ast R}} = g^\ast R = g^\ast (R_{f_R}) = R_{f_R \circ g}
\end{equation*}
(and since $f \mapsto R_f$ is injective) we must have that $f_{g^\ast R} = f_R \circ g$.

\end{proof}

We will later refer to the data $\Omega = | \mathbbm{I} |$ and $\mct_{\mathbbm{I}}$ as a \emph{predicate classifier} for $\pmetv$; see Definition \ref{def:predclass}. The name ``predicate classifier'' (or ``continuous subobject classifier'') is justified by the observation that for any r-geometric category $\mcc$ (such as $\catsets$ or any presheaf category), if instead of $\mathbbm{I}$ we take $\mathbbm{2}$ and make all the obvious resulting simplifications:
\begin{enumerate}[label=$\circ$,ref=$\circ$]

\item For every $X \in \mcc$, define $D_X \in [\Twop, \Sub X]$ by setting $D_X(0)$ to be the diagonal (and $D_X(1)$ is automatically terminal in $\Sub X$);

\item Define $E$ to be the $\mathbbm{2}$-moduloid $\{1_{\mathbbm{2}}\}$;

\end{enumerate}

then a predicate on $X$ is exactly just a subobject of $X$, so an object $\Omega \in \mcc$ and $\mct_{\mathbbm{2}} \in \Sub \Omega$ satisfying the evident analogue of the results of Theorem \ref{thm:classifier} (i.e. for $\mathbbm{2}$ instead of $\mathbb{I}$, and $\Omega$ in place of $| \mathbb{I}|$) exhibit precisely a (regular) subobject classifier for $\mcc$.

\subsection{Some adjoints}

The correspondence of Theorem \ref{thm:classifier} is well-behaved with respect to the relevant logical operations; to describe what we mean by ``relevant logical operations'' we must first establish some facts.

For each $\epsilon$, the property of being an $\epsilon$-predicate is preserved under taking limits (meets):

\begin{proposition}
\label{prop:limprecont}

Let $\epsilon \in E$, and $R_i \in \Sube X$.

Then $\bigwedge\limits_i R_i$ (where the meet is taken in $[\Iop, \Sub X]$) is again an object of $\Sube X$.

\end{proposition}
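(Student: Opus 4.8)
The plan is to exploit the fact that meets in the functor category $[\Iop, \Sub X]$ are computed pointwise, so that if we write $R := \bigwedge_j R_j$ then $R(r) = \bigwedge_j R_j(r)$ (the inner meet taken in the complete lattice $\Sub X$) for each $r \in \mathbb{I}$, and this assignment is again monotone in $r$, hence a genuine object of $[\Iop, \Sub X]$. It then suffices to verify the two defining conditions of Definition \ref{def:econtsubobj} for $R$, deducing each directly from the corresponding condition for the individual $R_j$.

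First I would check condition (1). Given $r = \inf_i r_i$ in $\mathbb{I}$, I compute
\[
R(r) = \bigwedge_j R_j(r) = \bigwedge_j \bigwedge_i R_j(r_i) = \bigwedge_i \bigwedge_j R_j(r_i) = \bigwedge_i R(r_i),
\]
where the second equality uses that each $R_j$ is an $\epsilon$-predicate and the third is the free reassociation and reordering of iterated meets in the complete lattice $\Sub X$. Next I would verify the continuity inequality (2). Fixing $r, s \in \mathbb{I}$ and an index $j$, monotonicity of $\underline{\pi_1}^{\ast}$ gives $\underline{\pi_1}^{\ast} R(r) \leq \underline{\pi_1}^{\ast} R_j(r)$, so that
\[
\left( \underline{\pi_1}^{\ast} R(r) \right) \wedge D_X(s) \leq \left( \underline{\pi_1}^{\ast} R_j(r) \right) \wedge D_X(s) \leq \underline{\pi_2}^{\ast} R_j(r + \epsilon(s)),
\]
the last step being condition (2) for $R_j$. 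Since this holds for every $j$, the left-hand side is a lower bound for the family $\{\underline{\pi_2}^{\ast} R_j(r+\epsilon(s))\}_j$ and hence lies below its meet, which equals $\underline{\pi_2}^{\ast} R(r + \epsilon(s))$.

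The one nonroutine point, which I would flag as the crux, is precisely this last identification of $\bigwedge_j \underline{\pi_2}^{\ast} R_j(r+\epsilon(s))$ with $\underline{\pi_2}^{\ast} \bigwedge_j R_j(r+\epsilon(s))$: monotonicity alone yields only the inequality in the wrong direction, so I genuinely need that the pullback functor $\underline{\pi_2}^{\ast}$ commutes with the (possibly infinite) meet. This is supplied by the r-geometric hypothesis via Proposition \ref{prop:heytadjstopb}, which guarantees that pulling back across any morphism preserves the complete-lattice structure of subobjects; in $\pmetv$ it is in any case transparent, since $\underline{\pi_2}^{\ast}$ is literally a preimage and preimages preserve arbitrary intersections. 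Everything else is a formal manipulation of pointwise meets, so once this commutation is in hand the proof closes immediately.
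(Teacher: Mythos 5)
Your proof is correct and follows essentially the same route as the paper's: both verify the two conditions of Definition \ref{def:econtsubobj} directly, using that meets in $[\Iop, \Sub X]$ are computed pointwise, that meets commute with meets, and that the pullback functors preserve arbitrary meets (the r-geometric structure of $\pmetv$). Your organization of condition (2) — monotonicity on the left-hand side and meet-preservation only for $\underline{\pi_2}^\ast$ on the right — is a minor repackaging of the paper's chain of (in)equalities, and you correctly isolate the one non-formal ingredient.
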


\begin{proof}

Let $R_i \in \Sube X$, which we consider as objects of $[\Iop, \Sub X]$, and take their meet $\bigwedge\limits_i R_i$ in this latter category. Let $r_j \in \mcv$ such that $\inf\limits_j r_j = r$. We show that
\begin{equation*}
\left ( \bigwedge\limits_i R_i \right ) (r) = \bigwedge\limits_j \left ( ( \bigwedge\limits_i R_i ) (r_j) \right ).
\end{equation*}
Because limits are taken pointwise in functor categories, for each $r \in \mathbb{I}$ we have that
\begin{equation*}
\left ( \bigwedge\limits_i R_i \right ) (r) = \bigwedge\limits_i \left ( R_i (r) \right )
\end{equation*}
where the meet of the right hand side is taken in $\Sub X$. By assumption, we have that $R_i (r) = \bigwedge\limits_j (R_i (r_j))$. Lastly, limits commute with limits, giving us
\begin{equation*}
\left ( \bigwedge\limits_i R_i \right ) (r) = \bigwedge\limits_i \left ( R_i (r) \right ) = \bigwedge\limits_i \left (\bigwedge\limits_j (R_i (r_j)) \right ) = \bigwedge\limits_j \left (\bigwedge\limits_i (R_i (r_j)) \right ) = \bigwedge\limits_j \left ( ( \bigwedge\limits_i R_i ) (r_j) \right ).
\end{equation*}

Now let $r,s \in \mathbb{I}$ be arbitrary. We have that $\left ( \underline{\pi_1}^{\ast} \bigwedge\limits_i R_i (r) \right ) \wedge D_X (s) = \bigwedge\limits_i \left ( \underline{\pi_1}^{\ast} R_i (r) \wedge D_X (s) \right ) \leq \bigwedge\limits_i \underline{\pi_2}^\ast R_i (r + \epsilon (s) ) = \underline{\pi_2}^\ast \bigwedge\limits_i R_i (r+\epsilon(s))$ (recall that (co)limits are taken pointwise in functor categories).

\end{proof}

Therefore we have that the inclusion $\Sube X \hookrightarrow [\Iop, \Sub X]$ preserves meets, and thus has a left adjoint by any of the adjoint functor theorems. Proposition \ref{prop:limprecont} also shows that for any $\epsilon_1 \leq \epsilon_2$, the inclusion $\Subeo X \hookrightarrow \Subet X$ also preserves meets (since meets in either category are computed in $[\Iop, \Sub X]$). Now any left adjoint to an inclusion of a full subcategory into a skeletal (e.g. posetal) category must also be a left inverse for formal reasons. We thus have the following:

\begin{corollary}
\label{cor:leftadjcont}
\item
\begin{enumerate}

\item For each $\epsilon \in E$, we have a left adjoint (and left inverse) $L_{\epsilon}: [\Iop, \Sub X] \rightarrow \Sube X$ to the inclusion $\Sube X \hookrightarrow [\Iop, \Sub X]$.

\item For each $\epsilon_1 \leq \epsilon_2$, we have a left adjoint (and left inverse) $L^{\epsilon_2}_{\epsilon_1}: \Subet X \rightarrow \Subeo X$ to the inclusion $\Subeo X \hookrightarrow \Subet X$.

\item These left adjoints are natural in $\epsilon$, in the following sense:\label{natladjs}

	\begin{enumerate}

	\item $L^{\epsilon}_{\epsilon_1} \circ L_{\epsilon} = L_{\epsilon_1}$

	\item $L^{\epsilon_2}_{\epsilon_1} \circ L^{\epsilon_3}_{\epsilon_2} = L^{\epsilon_3}_{\epsilon_1}$

	\end{enumerate}

\item $L^{\epsilon_2}_{\epsilon_1}: \Subet X \rightarrow \Subeo X$ is equal to $L_{\epsilon_1}: [\Iop, \Sub X] \rightarrow \Subeo X$ restricted along the inclusion $\Subet X \hookrightarrow [\Iop, \Sub X]$.\label{adjsrest}

\end{enumerate}

\end{corollary}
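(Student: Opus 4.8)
The plan is to treat parts (1)--(2) as an essentially immediate consequence of the paragraph preceding the statement, and to derive parts (3)--(4) by purely formal adjunction bookkeeping, exploiting that all the categories in sight are posets: adjoints, when they exist, are literally unique, and a left adjoint to a fully faithful inclusion is automatically a left inverse.

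First, for (1) and (2) I would record the explicit reflection. Since $\pmetv$ is r-geometric, each $\Sub X$ is a complete lattice, so $[\Iop, \Sub X]$ is a complete lattice with meets computed pointwise. Proposition \ref{prop:limprecont} says exactly that $\Sube X$ is closed under these meets inside $[\Iop, \Sub X]$, and likewise $\Subeo X$ is closed under meets inside $\Subet X$ whenever $\epsilon_1 \leq \epsilon_2$. For a full, meet-preserving inclusion $i$ of a sub-poset $A$ into a complete lattice $B$ with $A$ closed under meets, the assignment $b \mapsto \bigwedge \{ a \in A \mid b \leq a \}$ lands in $A$ (closure under meets) and is the least element of $A$ above $b$; this is precisely the left adjoint $L$. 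The left-inverse claim is then formal: the counit of a reflection along a fully faithful functor is an isomorphism, and in a poset an isomorphism is an identity, so $L i = \mathrm{id}$. Applying this to $i_\epsilon \colon \Sube X \hookrightarrow [\Iop, \Sub X]$ produces $L_\epsilon$, and to the full inclusion $\iota^{\epsilon_2}_{\epsilon_1} \colon \Subeo X \hookrightarrow \Subet X$ produces $L^{\epsilon_2}_{\epsilon_1}$.

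The crux of (3) and (4) is the single compatibility identity among the inclusions, $i_{\epsilon_2} \circ \iota^{\epsilon_2}_{\epsilon_1} = i_{\epsilon_1}$ (an $\epsilon_1$-predicate, regarded in $[\Iop, \Sub X]$, is the same whether included directly or through $\Subet X$), together with its iterated analogues. Given this, each identity is just uniqueness of adjoints in a poset. For (4) I would check directly that $L_{\epsilon_1} \circ i_{\epsilon_2}$ is left adjoint to $\iota^{\epsilon_2}_{\epsilon_1}$: for $R \in \Subet X$ and $S \in \Subeo X$ we have $L_{\epsilon_1}(i_{\epsilon_2} R) \leq S \iff i_{\epsilon_2} R \leq i_{\epsilon_1} S \iff i_{\epsilon_2} R \leq i_{\epsilon_2}(\iota^{\epsilon_2}_{\epsilon_1} S) \iff R \leq \iota^{\epsilon_2}_{\epsilon_1} S$, using $L_{\epsilon_1} \dashv i_{\epsilon_1}$, the compatibility identity, and that $i_{\epsilon_2}$ is an order-embedding; uniqueness of the left adjoint to $\iota^{\epsilon_2}_{\epsilon_1}$ then forces $L^{\epsilon_2}_{\epsilon_1} = L_{\epsilon_1} \circ i_{\epsilon_2}$. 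For (3a) and (3b) I would instead invoke that a composite of left adjoints is left adjoint to the composite of the right adjoints: $L^{\epsilon}_{\epsilon_1} \circ L_\epsilon \dashv i_\epsilon \circ \iota^{\epsilon}_{\epsilon_1} = i_{\epsilon_1}$, whence it equals $L_{\epsilon_1}$; and $L^{\epsilon_2}_{\epsilon_1} \circ L^{\epsilon_3}_{\epsilon_2} \dashv \iota^{\epsilon_3}_{\epsilon_2} \circ \iota^{\epsilon_2}_{\epsilon_1} = \iota^{\epsilon_3}_{\epsilon_1}$, whence it equals $L^{\epsilon_3}_{\epsilon_1}$, both by uniqueness.

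I expect no serious obstacle here: everything reduces to Proposition \ref{prop:limprecont} (to get the adjoints at all) together with the formal calculus of adjunctions in posets. The only points demanding care are bookkeeping ones -- verifying that the various inclusions genuinely compose as claimed, which is immediate from the definition of $\epsilon$-predicate, and keeping the source/target decorations on the $L$'s straight -- rather than anything mathematically substantive.
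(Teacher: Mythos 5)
Your proposal is correct and follows essentially the same route as the paper: parts (1)--(2) come from Proposition \ref{prop:limprecont} together with the adjoint functor theorem for posets (the explicit meet formula you give is exactly the paper's Remark \ref{rmk:explicitadj}), and parts (3)--(4) are the same formal adjunction bookkeeping based on the compatibility of the inclusions and uniqueness of adjoints, with only a cosmetic difference in (4) where the paper computes $L_{\epsilon_1} i_{\epsilon_2} = L^{\epsilon_2}_{\epsilon_1} L_{\epsilon_2} i_{\epsilon_2} = L^{\epsilon_2}_{\epsilon_1}$ rather than re-verifying the adjunction directly.
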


\begin{proof}

We only have left to prove (\ref{natladjs}) and (\ref{adjsrest}).

(\ref{natladjs}) is immediate by the fact that the compositions of inclusions
\begin{align*}
\Subeo X \hookrightarrow & \; \Sube X \hookrightarrow [\Iop, \Sub X]\\
\Subeo X \hookrightarrow & \; \Subet X \hookrightarrow \Subeth X
\end{align*}
are respectively equal to the inclusions
\begin{align*}
\Subeo X \hookrightarrow & \; [\Iop, \Sub X]\\
\Subeo X \hookrightarrow & \; \Subeth X.
\end{align*}

(\ref{adjsrest}) follows from (\ref{natladjs}) and the fact that the left adjoints are left inverses. Denote by $i_{\epsilon_2}: \Subet X \hookrightarrow [\Iop, \Sub X]$ the inclusion. Then $L_{\epsilon_1} i_{\epsilon_2} = L^{\epsilon_2}_{\epsilon_1} L_{\epsilon_2} i_{\epsilon_2} = L^{\epsilon_2}_{\epsilon_1}$.

\end{proof}

\begin{remark}
\label{rmk:explicitadj}

The proof of the adjoint functor theorem actually gives us an explicit description of the left adjoints above. For $\epsilon \in E$, $L_{\epsilon}$ takes any $R \in [\Iop, \Sub X]$ and returns $L_{\epsilon} R = \bigwedge \{ P \in \Sube X \mid R \leq P\}$. In light of Theorem \ref{thm:classifier} above, we see that this is the category theoretic solution to the problem of finding the closest approximation of an arbitrary real-valued function from below by a uniformly continuous function.

\end{remark}

The significance of these left adjoints is that for each $\epsilon \in E$ and each map $f: X \rightarrow Y$ with modulus of continuity $\epsilon_f$, we have a left adjoint $\exists_f$ to the pullback $f^\ast: \Sube Y \rightarrow \Subeef X$, as well as (for certain kinds of $f$) a right adjoint $\forall_f$; these are the ``relevant logical operations'' we mentioned previously.

Note that just as pointwise application of $\underline{f}^\ast: \Sub Y \rightarrow \Sub X$ yielded a functor $\overline{f}^\ast: [\Iop, \Sub Y] \rightarrow [\Iop, \Sub X]$, pointwise application of $\underline{\exists}_f: \Sub X \rightarrow \Sub Y$ (resp. $\underline{\forall}_f$) yields a left (resp. right) adjoint $\overline{\exists}_f: [\Iop, \Sub X] \rightarrow [\Iop, \Sub Y]$ (resp. $\overline{\forall}_f$) to $\overline{f}^\ast$.

\begin{proposition}
\label{prop:infsupadjs}

Let $\epsilon \in E$ and $f: X \rightarrow Y$ with modulus of continuity $\epsilon_f \in E$. Let $f^\ast: \Sube Y \rightarrow \Subeef X$ be as given in Proposition \ref{prop:pbofcontsubobj}.

\begin{enumerate}

\item $\exists_f = L_{\epsilon} \overline{\exists}_f i_{(\epsilon \circ \epsilon_f)}: \Subeef X \rightarrow \Sube Y$ is left adjoint to $f^\ast: \Sube Y \rightarrow \Subeef X$.\label{ladjtopb}

\item Let $f = \pi_X: Y \times X \rightarrow X$ be the projection, so that $\epsilon_f = 1_{\mathbb{I}}$. Also assume that $Y$ is inhabited. We have that $\forall_{\pi_X} = L_{\epsilon} \overline{\forall}_{\pi_X} i_{\epsilon}: \Sube (Y \times X) \rightarrow \Sube X$ is right adjoint to $\pi_X^\ast: \Sube X \rightarrow \Sube (Y \times X)$\label{radjtopb}

\end{enumerate}

where $i_{(\epsilon \circ \epsilon_f)}: \Subeef X \hookrightarrow [\Iop, \Sub X]$ is the inclusion.

\end{proposition}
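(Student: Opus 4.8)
The plan is to verify each adjunction directly as a biconditional between the two posets, chaining together three facts established earlier: the reflection adjunctions $L_\epsilon \dashv i_\epsilon$ of Corollary \ref{cor:leftadjcont} (whose inclusions $i_\epsilon$ and $i_{(\epsilon \circ \epsilon_f)}$ are full order-embeddings), the pointwise adjunctions $\overline{\exists}_f \dashv \overline{f}^\ast \dashv \overline{\forall}_f$ obtained by applying the adjoint triple of Proposition \ref{prop:heytadjstopb} in each degree, and the compatibility $\overline{f}^\ast i_\epsilon = i_{(\epsilon \circ \epsilon_f)} f^\ast$ recorded in Proposition \ref{prop:pbofcontsubobj}.

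For (\ref{ladjtopb}) I would fix $A \in \Subeef X$ and $B \in \Sube Y$ and peel off one adjunction at a time:
\begin{align*}
\exists_f A \leq B
&\iff L_\epsilon \overline{\exists}_f i_{(\epsilon \circ \epsilon_f)} A \leq B \\
&\iff \overline{\exists}_f i_{(\epsilon \circ \epsilon_f)} A \leq i_\epsilon B \\
&\iff i_{(\epsilon \circ \epsilon_f)} A \leq \overline{f}^\ast i_\epsilon B = i_{(\epsilon \circ \epsilon_f)} f^\ast B \\
&\iff A \leq f^\ast B,
\end{align*}
where the second step uses $L_\epsilon \dashv i_\epsilon$, the third uses $\overline{\exists}_f \dashv \overline{f}^\ast$ together with the equality of Proposition \ref{prop:pbofcontsubobj}, and the last uses fullness of $i_{(\epsilon \circ \epsilon_f)}$. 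This exhibits $\exists_f \dashv f^\ast$. Note the reflector $L_\epsilon$ is genuinely needed here: $\overline{\exists}_f$ acts pointwise by taking images, which preserve joins but not the meets appearing in condition (1) of Definition \ref{def:econtsubobj}, so $\overline{\exists}_f i_{(\epsilon \circ \epsilon_f)} A$ need not itself be an $\epsilon$-predicate and must be reflected back into $\Sube Y$.

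For (\ref{radjtopb}) the situation is dual but rests on one genuine computation, which I expect to be the main obstacle: a \emph{preservation lemma} asserting that when $f = \pi_X$ is a projection and $A \in \Sube (Y \times X)$, the object $\overline{\forall}_{\pi_X} i_\epsilon A$ already lies in $\Sube X$. The infimum condition is automatic, since $\underline{\forall}_{\pi_X}$ is a pointwise right adjoint and hence preserves meets. The continuity condition is where the projection structure is decisive: unwinding $(\overline{\forall}_{\pi_X} A)(r) = \{ x \mid (y,x) \in A(r) \text{ for all } y \in Y \}$ and using that the product metric of Proposition \ref{prop:metricprops} gives $d_{Y \times X}((y,x),(y,x')) = d_X(x,x')$, any pair $x, x'$ with $d_X(x,x') \leq s$ transports fiberwise via $(y,x) \mapsto (y,x')$ at \emph{no additional distance cost}, so the $\epsilon$-predicate inequality for $A$ yields the corresponding inequality for $\overline{\forall}_{\pi_X} A$ with the same modulus $\epsilon$. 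I would carry the standing hypothesis that $Y$ is inhabited through this step, identifying the fiber of $\pi_X$ over $x$ with a copy of $Y$; the decisive metric input, however, is exactly the product-metric identity above, which a general map $f$ (lacking parallel fibers) would not enjoy. Granting the lemma, $L_\epsilon$ acts as the identity on $\overline{\forall}_{\pi_X} i_\epsilon A$ (as $L_\epsilon i_\epsilon = \mathrm{id}$), so $\forall_{\pi_X}$ is simply the restriction of $\overline{\forall}_{\pi_X}$, and $\pi_X^\ast \dashv \forall_{\pi_X}$ then follows by restricting $\overline{\pi_X}^\ast \dashv \overline{\forall}_{\pi_X}$ along the full embeddings exactly as in (\ref{ladjtopb}) (here $\epsilon_f = 1_{\mathbb{I}}$, so $\epsilon \circ \epsilon_f = \epsilon$ and $\pi_X^\ast$ already lands in $\Sube(Y \times X)$).

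In short, everything reduces to formal adjunction-chasing through the reflections and the pointwise adjoint triple, \emph{except} the continuity half of the preservation lemma for $\overline{\forall}_{\pi_X}$. That single computation, turning on the max-metric identity $d_{Y \times X}((y,x),(y,x')) = d_X(x,x')$, is the only place carrying genuine metric content, and it is precisely what lets the universal quantifier over the $Y$-fiber preserve the modulus.
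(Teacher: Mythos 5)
Your proof is correct. Part (\ref{ladjtopb}) is essentially the paper's own argument: the same chain of biconditionals through $L_\epsilon \dashv i_\epsilon$, the pointwise adjunction $\overline{\exists}_f \dashv \overline{f}^\ast$, the identity $\overline{f}^\ast i_\epsilon = i_{(\epsilon\circ\epsilon_f)}f^\ast$, and fullness of the inclusions. For part (\ref{radjtopb}), however, you take a genuinely different and, I think, cleaner route. The paper verifies the unit and counit inequalities directly; the counit $\pi_X^\ast \forall_{\pi_X} R \leq R$ is reduced to a sub-lemma asserting $L_\epsilon \overline{\pi_X}^\ast i_\epsilon L_\epsilon S = L_\epsilon \overline{\pi_X}^\ast S$, whose proof unwinds Remark \ref{rmk:explicitadj}, shows that a certain comparison object of the form $\overline{\pi_X}^\ast P^\prime$ is an $\epsilon$-predicate using the product-metric identity of Proposition \ref{prop:metricprops}(\ref{prodmetric}), and invokes inhabitedness of $Y$ to get injectivity of $\underline{\pi_{X\times X}}^\ast$. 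You instead prove up front the preservation lemma that $\overline{\forall}_{\pi_X} i_\epsilon$ already lands in $\Sube X$ (a fact the paper only observes a posteriori, inside the proof of Proposition \ref{prop:adjsarequants}(\ref{forallpf})), so that $\forall_{\pi_X}$ is literally the restriction of $\overline{\forall}_{\pi_X}$ and the adjunction restricts along the full embeddings exactly as in part (\ref{ladjtopb}). Both arguments hinge on the same metric fact --- that the fiber transport $(y,x)\mapsto(y,x^\prime)$ costs only $d_X(x,x^\prime)$ under the max metric, which is what fails for a general $f$ --- but your decomposition isolates it in one transparent computation and avoids the reflection-commutation sub-lemma entirely. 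A minor observation: your argument never actually uses that $Y$ is inhabited (the preservation lemma holds vacuously for empty $Y$), whereas the paper's does; carrying the hypothesis is of course harmless since it is part of the statement.
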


\begin{proof}

Let $i_{\epsilon}: \Sube Y \hookrightarrow [\Iop, \Sub Y]$ denote the inclusion. The way Proposition \ref{prop:pbofcontsubobj} gave $f^\ast: \Sube Y \rightarrow \Subeef X$ was to show that $\overline{f}^\ast i_{\epsilon}: \Sube Y \rightarrow [\Iop, \Sub X]$ actually lands inside $\Subeef X$, so that $\overline{f}^{\ast} i_{\epsilon} = i_{(\epsilon \circ \epsilon_f)} f^\ast$ for some $f^\ast: \Sube Y \rightarrow \Subeef X$.

Then by Corollary \ref{cor:leftadjcont} we must have that $f^\ast = L_{(\epsilon \circ \epsilon_f)} \overline{f}^\ast i_{\epsilon}$, as well as $i_{(\epsilon \circ \epsilon_f)} L_{(\epsilon \circ \epsilon_f)} \overline{f}^\ast i_{\epsilon} = \overline{f}^\ast i_{\epsilon}$.

To show (\ref{ladjtopb}) we need to show
\begin{equation*}
L_{\epsilon} \overline{\exists}_f i_{(\epsilon \circ \epsilon_f)} R \leq P \Longleftrightarrow R \leq L_{(\epsilon \circ \epsilon_f)} \overline{f}^\ast i_{\epsilon} P
\end{equation*}
for all $R \in \Subeef X$ and $P \in \Sube Y$. Because $i_{\epsilon \circ \epsilon_f}$ is a full inclusion we have
\begin{equation*}
R \leq L_{(\epsilon \circ \epsilon_f)} \overline{f}^\ast i_{\epsilon} P \Longleftrightarrow i_{\epsilon \circ \epsilon_f} R \leq \left ( i_{\epsilon \circ \epsilon_f} L_{(\epsilon \circ \epsilon_f)} \overline{f}^\ast i_{\epsilon} P = \overline{f}^\ast i_{\epsilon} P \right ).
\end{equation*}

By composing adjoints we have that $L_{\epsilon} \overline{\exists}_f$ is left adjoint to $\overline{f}^\ast i_{\epsilon}$, so we have
\begin{equation*}
i_{\epsilon \circ \epsilon_f} R \leq \overline{f}^\ast i_{\epsilon} P \Longleftrightarrow L_{\epsilon} \overline{\exists}_f i_{\epsilon \circ \epsilon_f} R \leq P.
\end{equation*}

To show (\ref{radjtopb}), it suffices to show that $P \leq \forall_{\pi_X} \pi_X^{\ast} P$ for all $P \in \Sube X$, and $\pi_X^{\ast} \forall_{\pi_X} R \leq R$ for all $R \in \Sube (Y \times X)$.

$P \leq \forall_{\pi_X} \pi_X^{\ast} P = L_{\epsilon} \overline{\forall}_{\pi_X} i_{\epsilon} L_{\epsilon} \overline{\pi_X}^{\ast} i_{\epsilon}$ follows immediately by adjointness and the fact that $L_{\epsilon} i_{\epsilon}$ is the identity.

To show that $\pi_X^\ast \forall_{\pi_X} R = L_{\epsilon} \overline{\pi_X}^{\ast} i_{\epsilon} L_{\epsilon} \overline{\forall}_{\pi_X} i_{\epsilon} R \leq R$ for all $R \in \Sube (Y \times X)$, we first show that $L_{\epsilon} \overline{\pi_X}^{\ast} i_{\epsilon} L_{\epsilon} R = L_{\epsilon} \overline{\pi_X}^{\ast} R$ for all $R \in \Sube X$.

$L_{\epsilon} \overline{\pi_X}^{\ast} R \leq L_{\epsilon} \overline{\pi_X}^{\ast} i_{\epsilon} L_{\epsilon} R$ is immediate by adjointness, so it remains to show $L_{\epsilon} \overline{\pi_X}^{\ast} i_{\epsilon} L_{\epsilon} R \leq L_{\epsilon} \overline{\pi_X}^{\ast} R$. By Remark \ref{rmk:explicitadj}, the left hand side is $\bigwedge \{ P \mid \overline{\pi_X}^{\ast} i_{\epsilon} L_{\epsilon} R \leq i_{\epsilon} P \text{ and } P \in \Sube X \}$, while the right hand side is $\bigwedge \{ P \mid \overline{\pi_X}^{\ast} R \leq i_{\epsilon} P \text{ and } P \in \Sube X \}$. Thus it suffices to show that any $P \in \Sube X$ for which $\overline{\pi_X}^{\ast} R \leq i_{\epsilon} P$ also satisfies $\overline{\pi_X}^{\ast} i_{\epsilon} L_{\epsilon} R \leq i_{\epsilon} P$.

Now $\overline{\pi_X}^{\ast} R \leq i_{\epsilon} P$ means that $i_{\epsilon} P = \overline{\pi_X}^{\ast} P^{\prime}$ for some $P^{\prime} \in [\Iop, \Sub X]$. We also have that $\left ( \underline{\pi_1}^{\ast} \underline{\pi_X}^{\ast} P^\prime (r) \right ) (r) \wedge D_{Y \times X} (s) \leq \underline{\pi_2}^{\ast} \underline{\pi_X}^{\ast} P^\prime (r + \epsilon (s) )$. Recall that $D_{X \times Y} (s) = \underline{\pi_{Y \times Y}}^{\ast} D_Y (s) \wedge \underline{\pi_{X \times X}}^{\ast} D_X (s)$, and also that we commutative squares
\begin{equation*}
\begin{tikzcd}
Y \times X \times Y \times X
\arrow{r}{\pi_i}
\arrow{d}{\pi_{X \times X}}
	& Y \times X
	\arrow{d}{\pi_X} \\
X \times X
\arrow{r}{\pi_i}
	& X
\end{tikzcd}
\end{equation*}
for $i = 1,2$, giving us that 
\begin{equation*}
\left ( \underline{\pi_{X \times X}}^{\ast} \underline{\pi_1}^{\ast} P^\prime (r) \right ) (r) \wedge \underline{\pi_{Y \times Y}}^{\ast} D_Y (s) \wedge \underline{\pi_{X \times X}}^{\ast} D_X (s) \leq \underline{\pi_{X \times X}}^{\ast} \underline{\pi_2}^{\ast} P^\prime (r + \epsilon (s) )
\end{equation*}
which is equivalent to
\begin{equation*}
\underline{\pi_{X \times X}}^{\ast} \left ( \underline{\pi_1}^{\ast} P^\prime (r) \wedge D_X (s) \right ) = \left ( \underline{\pi_{X \times X}}^{\ast} \underline{\pi_1}^{\ast} P^\prime (r) \right ) (r) \wedge \underline{\pi_{X \times X}}^{\ast} D_X (s) \leq \underline{\pi_{X \times X}}^{\ast} \underline{\pi_2}^{\ast} P^\prime (r + \epsilon (s) )
\end{equation*}

Now since $Y$ is inhabited, $\underline{\pi_{X \times X}}^{\ast}: \Sub (X \times X) \rightarrow \Sub (Y \times X \times Y \times X)$ is a full embedding for formal reasons, so we finally get
\begin{equation*}
\left ( \underline{\pi_1}^{\ast} P^\prime (r) \right ) \wedge D_X (s) \leq \underline{\pi_2}^{\ast} P^\prime (r + \epsilon (s) ).
\end{equation*}

Moreover we have that for any $r, r_i \in \mathbb{I}$ such that $r = \inf\limits_i r_i$, $\underline{\pi_X}^{\ast} P^\prime (r) = \bigwedge\limits_i \underline{\pi_X}^{\ast} P^\prime (r_i) = \underline{\pi_X}^\prime \bigwedge\limits_i P^\prime (r_i)$. Again by injectivity of $\underline{\pi_X}^\ast: \Sub X \rightarrow \Sub (Y \times X)$ we have that $P^\prime (r) = \bigwedge\limits_i P^\prime (r_i)$.

We have thus verified that $P^\prime$ is an $\epsilon$-predicate on $X$, and therefore is of the form $i_\epsilon P^{\prime\prime}$ for some $P^{\prime\prime} \in \Sube X$. Then $i_{\epsilon} P = \overline{\pi_X}^{\ast} i_{\epsilon} P^{\prime\prime}$, so that $\overline{\pi_X}^{\ast} R \leq i_{\epsilon} P = \overline{\pi_X}^{\ast} i_{\epsilon} P^{\prime\prime}$. By injectivity of $\overline{\pi_X}^{\ast}$ (since $\underline{\pi_X}^{\ast}$ is injective), we have that $R \leq i_{\epsilon}P^{\prime\prime}$ which implies that $i_{\epsilon} L_{\epsilon} R \leq i_{\epsilon} P^{\prime\prime}$ by adjointness, and therefore we conclude that $\overline{\pi_X}^{\ast} i_{\epsilon} L_{\epsilon} R \leq \overline{\pi_X}^{\ast} i_{\epsilon} P^{\prime\prime} =  i_{\epsilon} P$.

Therefore we have that $L_{\epsilon} \overline{\pi_X}^{\ast} i_{\epsilon} L_{\epsilon} \overline{\forall}_{\pi_X} i_{\epsilon} R = L_{\epsilon} \overline{\pi_X}^{\ast} \overline{\forall}_{\pi_X} i_{\epsilon} R \leq R$ for all $R \in \Sube (Y \times X)$, completing the proof.

\end{proof}

The ``relevant logical operations'' mentioned earlier refers to the adjoints $\exists_{\pi_X}$ (resp. $\forall_{\pi_X}$) to the pullback $\pi_X^\ast$, for the projection $\pi_X: Y \times X \rightarrow X$. A standard paradigm in categorical semantics is that these adjoints should correspond to ``existential (resp. universal) quantification over $Y$'' in some appropriate sense; we show that in our case, these adjoints - which we arrived at purely formally - correspond precisely to those quantification operations prescribed by continuous logic, in the following sense.

\begin{proposition}
\label{prop:adjsarequants}

For each $X, Y \in \pmetv$, and each $R \in \Sube (Y \times X)$ with $\pi_X: Y \times X \rightarrow X$, the correspondence $R \mapsto f_R$ of Theorem \ref{thm:classifier} gives correspondences

\begin{enumerate}

\item $\exists_{\pi_X} R \in \Sube X \mapsto \inf\limits_{y \in Y} f_R (y, -): X \rightarrow |\mathbb{I}|$\label{existspf}

\item If $Y$ is inhabited: $\forall_{\pi_X} R \in \Sube X \mapsto \sup\limits_{y \in Y} f_R (y, -): X \rightarrow |\mathbb{I}|$.\label{forallpf}

\end{enumerate}

\end{proposition}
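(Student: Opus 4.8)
The plan is to push everything through the correspondence $R \mapsto f_R$ of Theorem \ref{thm:classifier}, which converts the formal adjoints $\exists_{\pi_X}$ and $\forall_{\pi_X}$ into elementary statements about real-valued functions. The first thing I would record is that this correspondence is \emph{order-reversing}: for $R, P \in \Sube X$ one has $R \le P$ (i.e.\ $R(r) \subseteq P(r)$ for every $r$) if and only if $f_P \le f_R$ pointwise. Indeed, since $R(r) = \{x \mid f_R(x) \le r\}$ and likewise for $P$, the condition $R(r) \subseteq P(r)$ for all $r$ says exactly that $f_R(x) \le r \Rightarrow f_P(x) \le r$, and evaluating at $r = f_R(x)$ shows this is equivalent to $f_P \le f_R$. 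Combined with the naturality clause $f_{g^\ast R} = f_R \circ g$ of Theorem \ref{thm:classifier} applied to $g = \pi_X$, this tells us that $\pi_X^\ast$ corresponds on the function side to precomposition $f_P \mapsto f_P \circ \pi_X$.

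Next I would isolate the one genuinely analytic ingredient: if $f : Y \times X \to |\mathbb{I}|$ is uniformly continuous with modulus $\epsilon$ (where $Y \times X$ carries the max metric), then $g := \inf_{y} f(y,-)$ and $h := \sup_{y} f(y,-)$ are again uniformly continuous maps $X \to |\mathbb{I}|$ with the \emph{same} modulus $\epsilon$. This is the standard fact that an infimum or supremum of a uniformly equicontinuous family retains the modulus: from $f(y,x') \le f(y,x) + \epsilon(d_X(x,x'))$ for every $y$ (using $d_{Y \times X}((y,x),(y,x')) = d_X(x,x')$), taking $\inf_y$ (resp.\ $\sup_y$) and combining with the symmetric inequality yields $|g(x) - g(x')| \le \epsilon(d_X(x,x'))$ (resp.\ for $h$). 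The truncation in $\mathbb{I}$ causes no trouble, since the metric on $|\mathbb{I}| = [0,1]$ is the honest $|a-b|$. In particular $g$ and $h$ are legitimate points of the function side of the correspondence \emph{for the modulus $\epsilon$ of $R$}, which is what makes the plug-in step below valid.

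With these in hand both claims fall out of the defining adjunctions from Proposition \ref{prop:infsupadjs} (taken with $f = \pi_X$, so $\epsilon_f = 1_{\mathbb{I}}$ and $\epsilon \circ \epsilon_f = \epsilon$). For (\ref{existspf}), the adjunction $\exists_{\pi_X} R \le P \iff R \le \pi_X^\ast P$, read through the order-reversing dictionary, becomes: for every uniformly continuous $f_P$ of modulus $\epsilon$, $f_P \le f_{\exists_{\pi_X} R} \iff f_P \circ \pi_X \le f_R$. But $f_P \circ \pi_X \le f_R$ (a function constant in $y$ lying below $f_R$ for all $y$) is precisely $f_P \le \inf_{y} f_R(y,-) = g$. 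Thus $f_P \le f_{\exists_{\pi_X}R} \iff f_P \le g$ for all admissible $f_P$; feeding in $f_P = f_{\exists_{\pi_X}R}$ gives $f_{\exists_{\pi_X}R} \le g$, and feeding in $f_P = g$ (legitimate by the lemma) gives $g \le f_{\exists_{\pi_X}R}$, forcing equality. For (\ref{forallpf}) I would run the mirror-image argument from the adjunction $\pi_X^\ast P \le R \iff P \le \forall_{\pi_X} R$ (valid since $Y$ is inhabited, by Proposition \ref{prop:infsupadjs}(\ref{radjtopb})): it translates to $f_{\forall_{\pi_X}R} \le f_P \iff \sup_{y} f_R(y,-) \le f_P$, and the same two substitutions give $f_{\forall_{\pi_X}R} = h = \sup_{y} f_R(y,-)$.

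The main obstacle is essentially bookkeeping rather than depth: one must get the order-reversal of the Theorem \ref{thm:classifier} correspondence exactly right so that the \emph{left} adjoint $\exists_{\pi_X}$ lands on $\inf$ and the \emph{right} adjoint $\forall_{\pi_X}$ on $\sup$ (a sign slip here would swap them), and one must verify the modulus-preservation lemma with enough care that $g$ and $h$ genuinely lie on the function side for the modulus $\epsilon$ --- this is the hypothesis that licenses substituting them back into the adjunction equivalences.
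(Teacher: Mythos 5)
Your proof is correct, but it takes a genuinely different route from the paper's. The paper argues concretely: for (\ref{existspf}) it uses the explicit description $\exists_{\pi_X} R = L_{\epsilon} \overline{\exists}_{\pi_X} i_{\epsilon} R$ from Remark \ref{rmk:explicitadj} and shows by a pointwise element-and-sequence chase that $\varphi^\ast \mct_{\mathbb{I}}$ (for $\varphi = \inf_y f_R(y,-)$) is the smallest $\epsilon$-predicate containing $\overline{\exists}_{\pi_X} i_{\epsilon} R$, and for (\ref{forallpf}) it shows by a similar chase that $\overline{\forall}_{\pi_X} i_{\epsilon} R$ is \emph{already} equal to $\psi^\ast \mct_{\mathbb{I}}$ on the nose. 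You instead argue by uniqueness of adjoints: you isolate the order-reversal of the Theorem \ref{thm:classifier} dictionary and the analytic lemma that $\inf_y$ and $\sup_y$ preserve the modulus $\epsilon$ under the max metric, and then both cases collapse to testing the adjunction biconditional against all $P \in \Sube X$ and substituting the two candidates. Your route is more uniform (the two cases are genuine mirror images, whereas the paper's two element chases differ in structure) and it makes explicit a point the paper leaves tacit --- namely that $\varphi^\ast\mct_{\mathbb{I}}$ must itself be verified to be an $\epsilon$-predicate before it can be declared the \emph{smallest} one containing $\overline{\exists}_{\pi_X} i_{\epsilon} R$, which is exactly your modulus-preservation lemma. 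What the paper's concrete computation buys in exchange is the sharper byproduct in the universal case that no reflection $L_{\epsilon}$ is actually needed there, i.e.\ the pointwise $\overline{\forall}_{\pi_X}$ of an $\epsilon$-predicate is again an $\epsilon$-predicate, which your abstract argument does not directly reveal.
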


\begin{proof}

(\ref{existspf}): Recall that $\exists_{\pi_X} R = L_{\epsilon} \overline{\exists}_{\pi_X} i_{\epsilon} R$, or in other words $\exists_{\pi_X} R$ is the smallest $\epsilon$-predicate on $X$ containing $\overline{\exists}_{\pi_X} i_{\epsilon} R$. Denote $\varphi = \inf\limits_{y \in Y} f_R (y, -)$. We will show that $R_{\varphi} = \varphi^{\ast} \mct_{\mathbb{I}}$ must necessarily be the smallest $\epsilon$-predicate containing $\overline{\exists}_{\pi_X} i_{\epsilon} R$, which will imply that $R_\varphi = \exists_{\pi_X} R$ and so $f_{\exists_{\pi_X} R} = f_{R_{\varphi}} = \varphi$.

First notice that $x \in \overline{\exists}_{\pi_X} i_{\epsilon} R(r)$ iff there is some $y \in Y$ such that $(y,x) \in i_{\epsilon}R(r)$. On the other hand, $x \in \varphi^\ast \mct_{\mathbb{I}} (r)$ iff $\inf\limits_{y \in Y} f_R(y, x ) = \inf\limits_{y\in Y} \inf \{ s \mid (y, x) \in i_{\epsilon} R(s) \} \leq r$. Clearly $\overline{\exists}_{\pi_X} i_{\epsilon} R(r) \leq \varphi^\ast \mct_{\mathbb{I}} (r)$.

For the other direction, assume $x \in \varphi^\ast \mct_{\mathbb{I}} (r)$. That is, there are sequences $y_i \in Y$ and $r_{i,j}, r_i \in \mathbb{I}$ such that $(y_i, x) \in i_{\epsilon} R(r_{i,j})$ for all $i,j$ and $r_{i,j}$ is weakly decreasing and converges to $r_i$ for each $i$, with $r_i$ weakly decreasing and converging to $r$. Now because $R$ is an $\epsilon$-predicate, we must have that $(y_i, x) \in i_{\epsilon} R(r_i)$ for all $i$. Then we have that $x \in \overline{\exists}_{\pi_X} i_{\epsilon} R (r_i)$ for all $i$. Then if $P$ is any $\epsilon$-predicate containing $\overline{\exists}_{\pi_X} i_{\epsilon} R$ then $x \in P(r)$. Therefore $\varphi^{\ast} \mct_{\mathbb{I}}$ must be the smallest $\epsilon$-predicate containing $\overline{\exists}_{\pi_X}$, and $f_{\exists_{\pi_X}} = \inf\limits_{y \in Y} f_R(y, -)$.

(\ref{forallpf}): $x \in \overline{\forall}_{\pi_X} i_{\epsilon} R(r)$ iff $(y, x) \in i_{\epsilon} R(r)$ for all $y \in Y$. On the other hand, let $\psi = \sup\limits_{y \in Y} f_R (y, -)$, so that $x \in \psi^{\ast} \mct_{\mathbb{I}}(r)$ iff $\sup_{y \in Y} f_R (y, x) = \sup_{y \in Y} \inf \{s \mid (y, x) \in i_{\epsilon} R(s) \} \leq r$. Clearly $\overline{\forall}_{\pi_X} i_{\epsilon} R \leq \psi^{\ast} \mct_{\mathbb{I}}$.

In the other direction, if $x \in \psi^{\ast} \mct_{\mathbb{I}}(r)$ then for every $y \in Y$ there is some sequence $r_i$ which is weakly decreasing and converging to $r$, with $(y,x) \in i_{\epsilon} R(r_i)$ for all $i$. Again because $R$ is an $\epsilon$-predicate, we must have that $(y,x) \in i_{\epsilon} R(r)$. Therefore $\psi^{\ast} \mct_{\mathbb{I}}(r) \leq \overline{\forall}_{\pi_X} i_{\epsilon} R$, thus $\overline{\forall}_{\pi_X} i_{\epsilon} R$ was an $\epsilon$-predicate to begin with and is equal to $\psi^{\ast} \mct_{\mathbb{I}} (r)$, so that $f_{\forall_{\pi_X}} = \sup\limits_{y \in Y} f_R(y, -)$.

\end{proof}

\section{Presheaves of metric spaces}
\label{sec:presheaves}

Thus far we have isolated features of the category $\pmetv$ giving rise to much of the structure (pertaining e.g. to continuous logic) present in $\pmetv$. The upshot is that we may identify the same structures in more general categories. A natural example to consider is categories of presheaves of metric spaces; not only do they mirror the generalization from sets to presheaves of sets, they also find application in highly practical contexts \cite{robinson2}, \cite{robinson1} pertaining to data integration.

We will see that the material developed in this section exhibits categories of presheaves of metric spaces as having much of the same essential structure as $\pmetv$, to be described in a precise sense.

\subsection{Setup}

We first set up some general categorical machinery that will allow us to apply our preceding analysis of the category $\pmetv$ to the category of presheaves.

Recall Definition \ref{def:rreg} of an r-geometric category as one having suitable structure on its subobject posets.

The below is essentially a restatement of Proposition \ref{prop:metricprops} and Lemma \ref{lem:continuity} together; we take the features of $\pmetv$ that Proposition \ref{prop:metricprops} and Lemma \ref{lem:continuity} guarantee and turn them into a definition which we may ask any r-geometric category to satisfy.

\begin{definition}
\label{def:metrization}

Let $\mcc$ be an r-geometric category, and $E$ an $\mathbb{I}$-moduloid.

\begin{enumerate}

\item A \emph{metrization} of $\mcc$ is a choice

\begin{enumerate}[label=$\circ$, ref=$\circ$]

\item for each $X, Y \in \mcc$, of product $X \times Y \in \mcc$, well-behaved in the evident sense with respect to symmetry and associativity, and;

\item for each $X \in \mcc$, of $D_X \in [\Iop, \Sub (X \times X)]$ satisfying the following:

\end{enumerate}

	\begin{enumerate}

\item $D_X(0)$ contains the diagonal;

\item The functor $[\Iop, \Sub (X \times X)] \xrightarrow{\cong} [\Iop, \Sub (X \times X)]$ induced by the symmetry isomorphism $X \times X \xrightarrow{\cong} X \times X$ interchanging the factors takes $D_X$ to itself;

\item Let $\pi_{i,j}: (X \times X \times X) \rightarrow X$ denote the projection onto the $i^{\text{th}}$ and $j^{\text{th}}$ factors, respectively. Then $\underline{\pi_{i,j}}^{\ast} D_X(r) \wedge \underline{\pi_{j,k}}^{\ast} D_X(s) \leq \underline{\pi_{i,k}}^{\ast} D_X (r + s)$ for every $r, s \in \mathbb{I}$.

\item If $r = \inf\limits_i r_i$ for some $r, r_i \in \mathbb{I}$, then $D_X(r) = \bigwedge\limits_i D_X (r_i)$;

\item Let $\pi_{X \times X}: (X \times Y \times X \times Y) \rightarrow (X \times X)$ and $\pi_{Y \times Y}: (X \times Y \times X \times Y) \rightarrow (Y \times Y)$ denote the projections preserving the ordering of the factors. Then $D_{X \times Y}(r) = \underline{\pi_{X \times X}}^{\ast} D_X(r) \wedge \underline{\pi_{Y \times Y}}^{\ast} D_Y(r)$ for all $r \in \mathbb{I}$.\label{prodmetric2}

	\end{enumerate}

\item Given a metrization of $\mcc$, we say that it is \emph{compatible with $E$} when for each morphism $f: X \rightarrow Y$ in $\mcc$, there is some $\epsilon \in E$ such that for every $r \in \mathbb{I}$, we have that
\begin{equation*}
D_X (r) \leq \underline{f \times f}^{\ast} D_Y ( \epsilon(r) )
\end{equation*}
and in this case we say that $f: X \rightarrow Y$ is \emph{continuous with respect to $\epsilon$}, or that \emph{$f$ has modulus $\epsilon$}.
\end{enumerate}

\end{definition}

Recall from Example \ref{ex:moduloids} the various $\mathbbm{I}$-moduloids $E_u, E_L, E_1$. From Lemma \ref{lem:continuity} in Section \ref{sec:indexedsubs}, we can see that
\begin{enumerate}

\item $\pmetu$ has a metrization compatible with $E_u$;

\item $\pmetl$ has a metrization compatible with $E_L$;

\item $\pmet$ has a metrization compatible with $E_1$.

\end{enumerate}

The point is that as soon as a category $\mcc$ has a metrization compatible with $E \subseteq \Endo(\mathbb{I})$ as in Definition \ref{def:metrization}, we immediately get results about $\mcc$ that make precise the idea that it behaves like a category of metric spaces. Specifically:

\begin{proposition}
\label{prop:metrizationcors}

Let $E$ be an $\mathbb{I}$-moduloid and $\mcc$ have a metrization compatible with $E$. Then the following holds:

\begin{enumerate}

\item For $X \xrightarrow{f} Y \xrightarrow{g} Z$ a composition of morphisms in $\mcc$ such that $f$ and $g$ have moduli $\epsilon_f$ and $\epsilon_g$ respectively, we have that $gf: X \rightarrow Z$ has modulus $\epsilon_g \circ \epsilon_f$.\label{compmodulus2}

\item For any $X, Y \in \mcc$, $\pi_X: X \times Y \rightarrow X$ satisfies
\begin{equation*}
D_{X \times Y} (r) \leq \underline{\pi_X \times \pi_X}^{\ast} D_X (r)
\end{equation*}
for all $r \in \mathbb{R}$.\label{projmodulus2}

\item Each pair of maps $f: X \rightarrow Y$, $g: X \rightarrow Z$ (with moduli $\epsilon_f$ and $\epsilon_g$ respectively) canonically corresponds to the obvious map $(f,g): X \rightarrow (Y \times Z)$, with modulus $\epsilon_{(f,g)} = \max (\epsilon_f, \epsilon_g)$.

In the other direction, if $\epsilon_{(f,g)}$ is a modulus for $(f,g)$ then it is also a modulus for both $f$ and $g$.\label{prodmodulus2}

\end{enumerate}

\end{proposition}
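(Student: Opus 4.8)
The plan is to observe that the proof of Proposition \ref{prop:formalpfs} never used anything about $\pmetv$ beyond the purely formal data recorded in the metrization axioms together with the ambient r-geometric structure, so I would transcribe that argument into the abstract setting, taking care at each step to name the abstract hypothesis that replaces the corresponding concrete fact about $\pmetv$. Throughout I would use that every reindexing functor $\underline{h}^\ast$ is a morphism of posets (hence monotone) and preserves finite meets (this is part of $\mcc$ being r-geometric, Definition \ref{def:rreg}), together with contravariant functoriality $\underline{h_1}^\ast \underline{h_2}^\ast = \underline{h_2 h_1}^\ast$ and the functoriality of the chosen products, so that $(g \times g)(f \times f) = (gf) \times (gf)$ and similarly for pairing maps.

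For (\ref{compmodulus2}) I would chain the defining inequalities of the two moduli: starting from $D_X(r) \leq \underline{f \times f}^\ast D_Y(\epsilon_f(r))$, I would apply the monotone functor $\underline{f \times f}^\ast$ to the instance $D_Y(\epsilon_f(r)) \leq \underline{g \times g}^\ast D_Z(\epsilon_g(\epsilon_f(r)))$ of the modulus for $g$, and then collapse $\underline{f \times f}^\ast \underline{g \times g}^\ast = \underline{(gf)\times(gf)}^\ast$ by functoriality. This yields $D_X(r) \leq \underline{(gf)\times(gf)}^\ast D_Z(\epsilon_g \circ \epsilon_f(r))$, which is precisely the assertion that $\epsilon_g \circ \epsilon_f$ is a modulus for $gf$.

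For (\ref{projmodulus2}) I would use that, under the well-behavedness of the chosen products with respect to symmetry and associativity, the map $\pi_X \times \pi_X$ is the projection $\pi_{X \times X}$; then axiom (\ref{prodmetric2}) of the metrization gives $D_{X \times Y}(r) = \underline{\pi_{X\times X}}^\ast D_X(r) \wedge \underline{\pi_{Y \times Y}}^\ast D_Y(r)$, and since a meet lies below each of its factors the desired inequality is immediate. For (\ref{prodmodulus2}) I would expand $D_{Y \times Z}(\epsilon_{(f,g)}(r))$ via the same product-metric axiom, pull it back along $(f,g)\times(f,g)$ using that $\underline{h}^\ast$ preserves binary meets, and rewrite $\underline{(f,g)\times(f,g)}^\ast \underline{\pi_{Y\times Y}}^\ast = \underline{f \times f}^\ast$ (and likewise for $Z$) via the product identities $\pi_Y \circ (f,g) = f$, $\pi_Z \circ (f,g) = g$. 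Writing $s = \epsilon_{(f,g)}(r) = \max(\epsilon_f,\epsilon_g)(r)$, monotonicity of $D_Y$ and $D_Z$ together with $\epsilon_f \leq \epsilon_{(f,g)}$ and $\epsilon_g \leq \epsilon_{(f,g)}$ reduces the two modulus hypotheses to $D_X(r) \leq \underline{f\times f}^\ast D_Y(s)$ and $D_X(r) \leq \underline{g\times g}^\ast D_Z(s)$, whose conjunction is exactly $D_X(r) \leq \underline{(f,g)\times(f,g)}^\ast D_{Y\times Z}(s)$. The converse follows by reading the same computation backwards, taking $\epsilon_f = \epsilon_g = \epsilon_{(f,g)}$.

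Since the argument is a transcription, I do not expect a genuine obstacle; the only points requiring care — and the reason the statement is worth recording separately — are precisely where a fact that was free in $\pmetv$ (because inherited from $\catsets$) must now be supplied by hypothesis. Namely, preservation of finite meets by $\underline{h}^\ast$ now comes from r-geometricity rather than from computing in $\catsets$, and the identification of reindexing maps such as $\pi_X \times \pi_X$ with $\pi_{X \times X}$ rests on the symmetry/associativity coherence built into the definition of a metrization rather than on an equality of underlying sets.
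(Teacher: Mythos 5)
Your proposal is correct and is essentially the paper's own proof: the paper's argument for Proposition \ref{prop:metrizationcors} is literally ``the same proof as for Proposition \ref{prop:formalpfs} goes through unchanged,'' and your transcription reproduces that computation step for step, with the added (and accurate) bookkeeping of which metrization/r-geometricity axioms supply each ingredient. No gaps.
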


\begin{proof}

The same proof as for Proposition \ref{prop:formalpfs} goes through unchanged.

\end{proof}

We repeat the definition of an $\epsilon$-predicate, now in the general case.

\begin{definition}
\label{def:econtsubobj2}

Let $E$ be an $\mathbb{I}$-moduloid and $\mcc$ have a metrization compatible with $E$.

Let $X \in \mcc$, and let $\epsilon: \mathbb{I} \rightarrow \mathbb{I}$ be an object of $E$.

We call $R \in [\Iop, \Sub X]$ an \emph{$\epsilon$-predicate on $X$} when:

\begin{enumerate}

\item Given $r, r_i \in \mathbb{I}$ such that $r = \inf\limits_i r_i$, $R(r) = \bigwedge\limits_i R(r_i)$, and;

\item For each $r,s \in \mathbb{I}$,
\begin{equation*}
\left ( \underline{\pi_1}^{\ast} R (r) \right ) \wedge D_X (s) \leq \underline{\pi_2}^{\ast} R (r + \epsilon(s))
\end{equation*}

\end{enumerate}

We denote by $\Sube X$ the full subcategory of $[\Iop, \Sub X]$ on the $\epsilon$-predicates.

Let us call $R \in [\Iop, \Sub X]$ a \emph{predicate on $X$} when there exists some $\epsilon \in E$ for which $R$ is an $\epsilon$-predicate.

\end{definition}

Again as before, for $\epsilon_1 \leq \epsilon_2$ any $\epsilon_1$-predicate is also an $\epsilon_2$-predicate, so we have a full inclusion $\Subeo X \hookrightarrow \Subet X$.

$\epsilon$-predicates in this more general context exhibit the same nice properties as they did in the category $\pmetv$, as we now detail below.

\begin{proposition}
\label{prop:freeresults}

Let $E$ be an $\mathbb{I}$-moduloid and $\mcc$ have a metrization compatible with $E$.

\begin{enumerate}

\item Given $f: X \rightarrow Y$ a morphism in $\mcc$ with modulus of continuity $\epsilon_f$, and given $R \in [\Iop, \Sub Y]$ which is an $\epsilon$-predicate, we have that $\overline{f}^\ast R \in [\Iop, \Sub X]$ is an $(\epsilon \circ \epsilon_f)$-predicate.

In particular, $\overline{f}^\ast: [\Iop, \Sub Y] \rightarrow [\Iop, \Sub X]$ descends to a functor $f^\ast: \Sube Y \rightarrow \Subeef X$ for which $i_{\epsilon \circ \epsilon_f} f^{\ast} = \overline{f}^{\ast} i_{\epsilon}$. \label{pbpreds2}

\item Let $\epsilon \in E$, $X \in \mcc$, and $R_i \in \Sube X$.

Then $\bigwedge\limits_i R_i$ (where the meet is taken in $[\Iop, \Sub X]$) is again an object of $\Sube X$.\label{meetpreserve2}

\item Let $X \in \mcc$.

\begin{enumerate}

\item For each $\epsilon \in E$, we have a left adjoint (and left inverse) $L_{\epsilon}: [\Iop, \Sub X] \rightarrow \Sube X$ to the inclusion $\Sube X \hookrightarrow [\Iop, \Sub X]$.

\item For each $\epsilon_1 \leq \epsilon_2$, we have a left adjoint (and left inverse) $L^{\epsilon_2}_{\epsilon_1}: \Subet X \rightarrow \Subeo X$ to the inclusion $\Subeo X \hookrightarrow \Subet X$.

\item These left adjoints are natural in $\epsilon$, in the following sense:\label{natladjs2}

	\begin{enumerate}

	\item $L^{\epsilon}_{\epsilon_1} \circ L_{\epsilon} = L_{\epsilon_1}$

	\item $L^{\epsilon_2}_{\epsilon_1} \circ L^{\epsilon_3}_{\epsilon_2} = L^{\epsilon_3}_{\epsilon_1}$

	\end{enumerate}

\item $L^{\epsilon_2}_{\epsilon_1}: \Subet X \rightarrow \Subeo X$ is equal to $L_{\epsilon_1}: [\Iop, \Sub X] \rightarrow \Subeo X$ restricted along the inclusion $\Subet X \hookrightarrow [\Iop, \Sub X]$.\label{adjsrest2}

\end{enumerate}\label{sheafification2}

\item Let $\epsilon \in E$ and $f: X \rightarrow Y$ a morphism in $\mcc$ with modulus of continuity $\epsilon_f \in E$. Let $f^\ast: \Sube Y \rightarrow \Subeef X$ be as given in (\ref{pbpreds2}) above.

\begin{enumerate}

\item $\exists_f = L_{\epsilon} \overline{\exists}_f i_{(\epsilon \circ \epsilon_f)}: \Subeef X \rightarrow \Sube Y$ is left adjoint to $f^\ast: \Sube Y \rightarrow \Subeef X$.\label{ladjtopb2}

\item Let $f = \pi_X: Y \times X \rightarrow X$ be the projection, so that $\epsilon_f = 1_{\mathbb{I}}$.

We have that $\forall_{\pi_X} = L_{\epsilon} \overline{\forall}_{\pi_X} i_{\epsilon}: \Sube (Y \times X) \rightarrow \Sube X$ is right adjoint to $\pi_X^\ast: \Sube X \rightarrow \Sube (Y \times X)$\label{radjtopb2}

\end{enumerate}

where $i_{(\epsilon \circ \epsilon_f)}: \Subeef X \hookrightarrow [\Iop, \Sub X]$ is the inclusion.\label{quantadjoints2}

\end{enumerate}

\end{proposition}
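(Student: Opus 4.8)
The plan is to observe that each of the four assertions is the verbatim analogue of a result already proved for $\pmetv$, and that those proofs were deliberately written so as to invoke \emph{only} the formal data now axiomatized in Definition \ref{def:metrization} together with the ambient structure of an r-geometric category. Concretely, the formal inputs are: that each pullback $\underline{f}^\ast : \Sub Y \to \Sub X$ has a left adjoint $\underline{\exists}_f$ (r-regularity) and a right adjoint $\underline{\forall}_f$ (r-geometricity), so in particular $\underline{f}^\ast$ preserves all meets and joins; that each $\Sub X$, and hence the functor category $[\Iop, \Sub X]$ with limits and colimits computed pointwise, is a complete lattice; that pullback is functorial along commuting squares, yielding the identities $\underline{\pi_i}^\ast \underline{f}^\ast = \underline{f \times f}^\ast \underline{\pi_i}^\ast$ and their variants; and the metrization axioms, chiefly the product formula (\ref{prodmetric2}) and the continuity inequality defining ``$f$ has modulus $\epsilon$''. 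None of the arguments below inspect points of $X$, so all of them transfer unchanged.

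For (\ref{pbpreds2}) one reruns Proposition \ref{prop:pbofcontsubobj}: the meet clause for $\overline{f}^\ast R$ holds because $\overline{f}^\ast$ preserves meets, and the displayed inequality follows by inserting the estimate $D_X(s) \leq \underline{f \times f}^\ast D_Y(\epsilon_f(s))$, commuting $\underline{\pi_i}^\ast$ past $\underline{f}^\ast$, and applying the $\epsilon$-predicate inequality for $R$. For (\ref{meetpreserve2}) one repeats Proposition \ref{prop:limprecont}: meets in $[\Iop, \Sub X]$ are pointwise, meets commute with meets, and $\underline{\pi_1}^\ast, \underline{\pi_2}^\ast$ preserve meets while meet distributes over meet, so $\bigwedge_i R_i$ again satisfies both clauses. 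Part (\ref{sheafification2}) is then Corollary \ref{cor:leftadjcont}: by (\ref{meetpreserve2}) the full inclusion $\Sube X \hookrightarrow [\Iop, \Sub X]$ preserves meets between complete lattices, so the adjoint functor theorem for complete lattices supplies the left adjoint $L_\epsilon$, which is automatically a left inverse because the inclusion is full into a posetal category; the naturality identities (\ref{natladjs2}) and the restriction statement (\ref{adjsrest2}) are then pure bookkeeping with composites of inclusions and the left-inverse property, exactly as before.

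Finally (\ref{quantadjoints2}) reproduces Proposition \ref{prop:infsupadjs}. Pointwise application of $\underline{\exists}_f \dashv \underline{f}^\ast \dashv \underline{\forall}_f$ yields $\overline{\exists}_f \dashv \overline{f}^\ast \dashv \overline{\forall}_f$, so (\ref{ladjtopb2}) is a formal composite of adjunctions using $L_\epsilon \dashv i_\epsilon$ and the factorization $\overline{f}^\ast i_\epsilon = i_{\epsilon \circ \epsilon_f} f^\ast$ from (\ref{pbpreds2}). The substantive case is (\ref{radjtopb2}): one unit inequality is again formal, while the counit $\pi_X^\ast \forall_{\pi_X} R \leq R$ reduces, via the explicit description of $L_\epsilon$ in Remark \ref{rmk:explicitadj}, to showing that a $P' \in [\Iop, \Sub X]$ with $i_\epsilon P = \overline{\pi_X}^\ast P'$ is itself an $\epsilon$-predicate; here one uses the product formula for $D_{Y \times X}$ and the commuting squares relating the projections on $Y \times X$ to those on $X$. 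I expect the main obstacle to be precisely the one non-formal step of that argument: in $\pmetv$ one strips the common factor $\underline{\pi_{X \times X}}^\ast$ (and later $\underline{\pi_X}^\ast$) off both sides of an inequality, which is legitimate only because these pullback functors are \emph{full embeddings} of subobject posets. In $\pmetv$ this was guaranteed by $Y$ being inhabited, which produces a section of $\pi_X$ and hence makes $\underline{\pi_X}^\ast$ a split mono of posets. In the present generality the correct formal hypothesis is therefore that the projection $\pi_X : Y \times X \to X$ admit a section, e.g. that $Y$ carry a global element $1 \to Y$; under this condition $\underline{\pi_{X \times X}}^\ast$ and $\underline{\pi_X}^\ast$ are injective and order-reflecting, and the remainder of the argument of Proposition \ref{prop:infsupadjs} carries over without change.
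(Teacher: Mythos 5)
Your proposal is correct and takes essentially the same approach as the paper, whose entire proof of this proposition consists of the statement that the proofs of Proposition \ref{prop:pbofcontsubobj}, Proposition \ref{prop:limprecont}, Corollary \ref{cor:leftadjcont}, and Proposition \ref{prop:infsupadjs} go through unchanged; your identification of exactly which formal inputs those proofs consume is the right justification for that claim.

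One point in your proposal is worth highlighting because it goes beyond the paper: you correctly observe that the counit argument for (\ref{radjtopb2}) is the single non-formal step, hinging on $\underline{\pi_X}^\ast$ and $\underline{\pi_{X \times X}}^\ast$ being order-reflecting injections of subobject posets. In Proposition \ref{prop:infsupadjs} this is secured by the explicit hypothesis that $Y$ is inhabited, but the general statement (\ref{radjtopb2}) as written silently drops that hypothesis, and ``same proof'' does not literally apply without it. Your proposed repair --- requiring $\pi_X : Y \times X \to X$ to admit a section, e.g.\ via a global element $1 \to Y$, so that $\underline{\pi_X}^\ast$ is a split mono of posets --- is the correct general formulation and should be regarded as an implicit standing hypothesis for that clause.
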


\begin{proof}

(\ref{pbpreds2}): Same proof as for Proposition \ref{prop:pbofcontsubobj}.

(\ref{meetpreserve2}): Same proof as for Proposition \ref{prop:limprecont}.

(\ref{sheafification2}): Same proof as for Corollary \ref{cor:leftadjcont}.

(\ref{quantadjoints2}): Same proof as for Proposition \ref{prop:infsupadjs}.

\end{proof}

Finally, for $\mcc$ equipped with a metrization compatible with an $\mathbb{I}$-moduloid $E$, we may make sense of the notion of a ``continuous subobject classifier'' as follows:

\begin{definition}
\label{def:predclass}

Let $E$ be an $\mathbb{I}$-moduloid and $\mcc$ have a metrization compatible with $E$.

A \emph{predicate classifier} is given by an object $\Omega \in \mcc$ (and its associated $D_{\Omega} \in [\Iop, \Sub (\Omega \times \Omega) ]$) along with a $1_{\mathbb{I}}$-predicate $\mct_{\mathbb{I}} \in \Suboi \Omega$ such that:

For any $R \in \Sube X$ where $\epsilon \in E$, there is a unique $f: X \rightarrow \Omega$ such that $R = f^{\ast} \mct_{\mathbb{I}}$, and moreover this $f$ has modulus of continuity $\epsilon$.

\end{definition}

With this definition, Theorem \ref{thm:classifier} is just the statement that $\pmetv$ has a predicate classifier.

Given that the framework laid out by Proposition \ref{prop:metrizationcors}, Definition \ref{def:econtsubobj2}, Proposition \ref{prop:freeresults}, and Definition \ref{def:predclass} wholly depends on the category $\mcc$ having a metrization (compatible with some $E$) as in Definition \ref{def:metrization}, it is appropriate to ask whether there are any interesting examples of such categories apart from the obvious examples $\pmet$, $\pmetl$, and $\pmetu$.

We will see over the remainder of this section that categories of presheaves of metric spaces provide an affirmative answer to this question, and furthermore that each such category even exhibits a predicate classifier as described in Definition \ref{def:predclass}.

\subsection{The category of presheaves}

For technical ease, we impose some conditions on the kind of presheaves we consider. Recall that $\pmet$ refers to the category of diameter $\leq 1$ pseudometric spaces and $1$-Lipschitz maps between them.

\begin{definition}
\label{def:shortshvs}

Let $\mcc$ be a small category. We call a functor $F: \Cop \rightarrow \pmet$ a \emph{metric presheaf on $\mcc$}, and the functor category $[\Cop, \pmet]$ the \emph{category of metric presheaves on $\mcc$}, which we also denote by $\Psh(\mcc)$.

\end{definition}

\begin{lemma}
\label{lem:pshlims}

$\Psh(\mcc)$ has finite limits.

\end{lemma}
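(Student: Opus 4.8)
The plan is to show that finite limits in $\Psh(\mcc) = [\Cop, \pmet]$ exist and are computed pointwise, reducing the problem to the existence of finite limits in the target category $\pmet$. Since $\pmet$ is one of the admissible choices of $\pmetv$, Lemma \ref{lem:metpbs} already furnishes $\pmet$ with a terminal object and pullbacks, hence with all finite limits; in the concrete description from the proof of that lemma (and the product chosen in Proposition \ref{prop:metricprops}), the limit of a finite diagram has as underlying set the corresponding limit of underlying sets, equipped with the metric given by the \emph{maximum} of the component metrics pulled back along the projections.

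Concretely, given a finite diagram $D \colon J \to \Psh(\mcc)$ with $D(j) = F_j$, I would first form, for each object $c \in \mcc$, the limit $L(c) := \lim_{j} F_j(c)$ taken in $\pmet$. Its points are the compatible families $(x_j)_j$ and its metric is $d_{L(c)}\bigl((x_j),(y_j)\bigr) = \max_j d_{F_j(c)}(x_j, y_j)$, which again has diameter $\leq 1$ since $J$ is finite. The crucial step is to check that these pointwise limits assemble into a genuine metric presheaf $L \colon \Cop \to \pmet$: for a morphism $\phi \colon c' \to c$ of $\mcc$, the maps $F_j(\phi) \colon F_j(c) \to F_j(c')$ are compatible with the transition natural transformations $D(u)$, so by the universal property of $L(c')$ they induce a unique map $L(\phi)$ acting by $(x_j)_j \mapsto (F_j(\phi)(x_j))_j$, and functoriality of $L$ follows from that of each $F_j$.

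The one point requiring the hypotheses of $\pmet$ is that each such $L(\phi)$ be $1$-Lipschitz, so that $L$ really lands in $\pmet$; but this is immediate from the max-metric formula, since
\[
d_{L(c')}\bigl(L(\phi)(x), L(\phi)(y)\bigr) = \max_j d_{F_j(c')}\bigl(F_j(\phi)(x_j), F_j(\phi)(y_j)\bigr) \leq \max_j d_{F_j(c)}(x_j, y_j) = d_{L(c)}(x,y),
\]
using that each $F_j(\phi)$ is $1$-Lipschitz. This is the only step that I expect to present any subtlety at all — the analogous construction with a sum metric would not obviously preserve the Lipschitz constant — and it is precisely where the use of the maximum metric pays off. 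Everything else is routine.

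Finally, I would verify the universal property: the pointwise projections $L \to F_j$ are $1$-Lipschitz and natural, forming a cone, and for any cone $(G \to F_j)_j$ in $\Psh(\mcc)$ the pointwise universal maps $G(c) \to L(c)$ assemble (by naturality, checked objectwise) into the unique factoring natural transformation $G \to L$. This is the standard argument that limits in a functor category are inherited pointwise from the target, and requires no metric input beyond the $1$-Lipschitz verification above.
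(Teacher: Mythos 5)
Your proof is correct and follows the same route as the paper, which simply observes that $\pmet$ has finite limits and that limits in the functor category $\Psh(\mcc) = [\Cop, \pmet]$ may therefore be computed pointwise. You have merely spelled out the standard pointwise-limit argument (including the easy check that the induced transition maps remain $1$-Lipschitz under the max metric) that the paper leaves implicit.
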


\begin{proof}

$\pmet$ has finite limits, and so we can take limits in $\Psh(\mcc)$ pointwise.

\end{proof}

Now isomorphisms in $\pmet$ are isometries, and for every parallel pair of morphisms their equalizer can be chosen to be an isometric embedding, so we must have that in fact every equalizer (and thus every regular monomorphism) is an isometric embedding.

We have the following convenient description of regular monomorphisms in $\Psh(\mcc)$.

\begin{lemma}
\label{lem:pshregmono}

Let $F, G \in \Psh(\mcc)$. A natural transformation $\mu: F \rightarrow G$ is a regular monomorphism in $\Psh(\mcc)$ if and only if each component $\mu_a: Fa \rightarrow Ga$ is a regular monomorphism in $\pmet$ for each object $a \in \mcc$.

\end{lemma}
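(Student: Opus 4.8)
My plan is to prove both implications by exploiting the fact that, just as finite limits in $\Psh(\mcc)$ are computed pointwise (Lemma \ref{lem:pshlims}), so too are \emph{co}limits, so that the equalizing and coequalizing data witnessing regularity can be assembled componentwise. The forward direction is immediate: if $\mu$ is a regular monomorphism, then it is the equalizer of some parallel pair $G \rightrightarrows H$ in $\Psh(\mcc)$, and since equalizers in $[\Cop,\pmet]$ are taken pointwise (as in the proof of Lemma \ref{lem:pshlims}), each component $\mu_a$ is the equalizer of $Ga \rightrightarrows Ha$ in $\pmet$, hence a regular monomorphism there.

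For the converse I would first record the geometric picture. By the remark preceding the statement, each $\mu_a$ being a regular mono means each $\mu_a \colon Fa \to Ga$ is an isometric embedding, and naturality of $\mu$ then forces the restriction maps $G_h$ to carry $Fb$ into $Fa$ for every $h \colon a \to b$; thus $F$ is literally a subpresheaf of $G$ with each $Fa \subseteq Ga$ an isometrically embedded subspace. The tempting move is to imitate the right-to-left construction in Proposition \ref{prop:regmonoembeds} by equipping each $Ga$ with an ``indicator'' map to the two-point space of diameter $0$ that detects $Fa$; but this fails to be natural, precisely because $G_h(x) \in Fa$ does not imply $x \in Fb$. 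This failure is the crux of the matter, and it is why a naive pointwise argument does not suffice: one needs an equalizing pair that is \emph{coherent} across all objects of $\mcc$ simultaneously.

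The resolution I would use is the self-indexed (cokernel pair) construction, which is automatically natural. The plan is to invoke the general categorical fact that in any category with pushouts, a monomorphism is regular if and only if it is the equalizer of its own cokernel pair $(i_1,i_2)\colon X \rightrightarrows X \sqcup_A X$; the nontrivial implication follows because any pair $f,g$ equalized by $m$ factors through the pushout, so the equalizer of $(i_1,i_2)$ both receives and factors through $m$, forcing them to agree. To apply this I first note that $\pmet$ has pushouts: coproducts exist by assigning cross-distances the maximal value $1$ (legitimate since all spaces have diameter $\leq 1$), and coequalizers exist via the quotient pseudometric, so cokernel pairs exist; moreover these colimits are computed pointwise in $\Psh(\mcc)$. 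Now $\mu$ is a monomorphism in $\Psh(\mcc)$ (monos are also detected pointwise, via the pointwise kernel pair), and its cokernel pair is the pointwise cokernel pair $Ga \rightrightarrows Ga \sqcup_{Fa} Ga$. Since each $\mu_a$ is a regular mono in $\pmet$, the cited fact shows $\mu_a$ is the equalizer of this component cokernel pair; taking the pointwise equalizer therefore recovers $F$, exhibiting $\mu$ as the equalizer of its cokernel pair in $\Psh(\mcc)$ and hence as a regular monomorphism.

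The step I expect to require the most care is the verification that $\pmet$ has the requisite pushouts and that they are stable under the pointwise computation in $\Psh(\mcc)$; once this is in hand, the naturality problem that doomed the indicator approach dissolves, because the cokernel pair of a natural transformation is natural by construction. I would keep the remaining bookkeeping (that pointwise monos are monos, that pointwise equalizers are equalizers) brief, since each is the standard dual of the reasoning already used for Lemma \ref{lem:pshlims}.
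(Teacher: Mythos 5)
Your proof is correct, but it takes a genuinely different route from the paper's. The paper handles the converse by an explicit construction: it builds a presheaf $H$ assigning to each $a$ the set of sieves on $a$ with the indiscrete (all distances $0$) metric, and exhibits $\mu$ as the equalizer of the natural transformation $\nu_a(x) = \{f : b \to a \mid Gf(x) \in Fb\}$ against the constant-maximal-sieve transformation $\nu'$ --- precisely the sieve-based repair of the non-natural ``indicator map'' obstruction you correctly diagnose, and a construction that foreshadows the predicate classifier $\Omega$ built in Section 4. You instead invoke the general fact that a regular monomorphism is the equalizer of its own cokernel pair, together with pointwise computation of colimits in $[\Cop, \pmet]$. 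Your route is more abstract and more portable: it would prove the analogous statement for presheaves valued in any category with pushouts in which regular monos behave this way, with no bespoke construction. The price is that you must establish that $\pmet$ has the requisite pushouts --- something the paper never needs and never proves --- via coproducts with cross-distance $1$ and coequalizers with the quotient (chain-infimum) pseudometric; both verifications are routine (the diameter bound and $1$-Lipschitz universal properties go through), but they are a real additional obligation, and you are right to flag them as the step needing the most care. The remaining bookkeeping (pointwise cokernel pairs, the canonical comparison from $F$ to the pointwise equalizer being a pointwise and hence global isomorphism) is standard and your sketch of it is sound.
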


\begin{proof}

The forward direction is clear, since limits are taken pointwise in functor categories. We show that given a natural transformation $\mu: F \rightarrow G$ such that for every $a \in \mcc$ the component $\mu_a: Fa \rightarrow Ga$ is a regular mono (and thus an isometric embedding), $\mu$ is in fact an equalizer
\begin{tikzcd}
F
\arrow{r}{\mu}
	& G
	\arrow[shift left]{r}{\nu}
	\arrow[shift right]{r}[swap]{\nu^\prime}
		& H
\end{tikzcd}
for some parallel natural transformations $\nu, \nu^\prime: G \rightarrow H$ (for some $H$).

Define the values of $H \in \Psh(\mcc)$ on each $a \in \mcc$ to be the set of sieves on $a$ equipped with the indiscrete metric (every distance is $0$), and for each morphism $f: b \rightarrow a$ in $\mcc$, we define $Hf: Ha \rightarrow Hb$ to be the evident pullback action on sieves. That is, given $S \in Ha$, $Hf (S) \in Hb$ is the sieve
\begin{equation*}
Hf (S) = f^\ast S = \{h: b^\prime \rightarrow b \mid fh \in S\}
\end{equation*}
and $Hf$ is clearly $1$-Lipschitz since every distance is $0$.

Since each component $\mu_a: Fa \rightarrow Ga$ is an isometric embedding, we may equivalently think of each $Fa$ as a subspace of $Ga$, with $\mu_a$ being the inclusion. We define $\nu: G \rightarrow H$ componentwise as follows. $\nu_a: Ga \rightarrow Ha$ is the map that sends each point $x \in Ga$ to the sieve
\begin{equation*}
\nu_a (x) = \{f: b \rightarrow a \mid Gf (x) \in Fb \}.
\end{equation*}
Now given $x \in Fa \subseteq Ga$ and $f: b \rightarrow a$ in $\mcc$, we verify that $Hf \circ \nu_a (x) = \nu_b \circ Gf (x)$.

$\nu_b ( Gf (x) )$ is the sieve on $b$ consisting of all those morphisms $h: b^\prime \rightarrow b$ such that $Gh (Gf (x)) = G(fh)(x) \in Fb \subseteq Gb$. But this is exactly $Hf (\nu_a (x) )$, so $\nu: G \rightarrow H$ is a natural transformation.

We now define $\nu^\prime: G \rightarrow H$ componentwise as follows. For each $a \in \mcc$ and for each $x \in Ga$, $\nu^\prime_a (x)$ is the maximal sieve on $a$. This is easily verified to be a natural transformation (pulling back the maximal sieve on $a$ across $f: b \rightarrow a$ yields the maximal sieve on $b$).

We take an equalizer of $\nu$ and $\nu^\prime$, call it $\mu^\prime: F^\prime \rightarrow G$. Each component $\mu^\prime_a: F^\prime a \rightarrow Ga$ is an isometric embedding, and is an equalizer of $\nu_a$ and $\nu^\prime_a$. But this means that $\mu_a$ and $\mu^\prime_a$ are isometric embeddings with the same image for each $a \in \mcc$, since a sieve is maximal iff it contains the identity morphism. It is easy to see that this forces $F$ and $F^\prime$ to be naturally isomorphic, so $\mu$ is an equalizer of $\nu$ and $\nu^\prime$, as desired.

\end{proof}

By our previous discussion, regular monos in $\Psh(\mcc)$ are thus natural transformations whose components are isometric embeddings. We formalize this discussion as the following lemma:

\begin{lemma}
\label{lem:whataresubs}

Given a functor $F: \Cop \rightarrow \pmet$, a subobject $\mu \in \Sub F$ uniquely determines the following data:
\begin{enumerate}

\item For each object $a \in \mcc$, a subspace $\mu(a) \subseteq Fa$ such that;

\item For each morphism $f: a \rightarrow b$ in $\mcc$, the map $Ff: Fb \rightarrow Fa$ restricts to a map $\mu(f) = Ff: \mu(b) \rightarrow \mu(a)$.

\end{enumerate}

Conversely, the above data uniquely determines a regular monomorphism with codomain $F$, and therefore a subobject $\mu \in \Sub F$.

\end{lemma}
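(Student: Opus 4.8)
The plan is to lean entirely on Lemma \ref{lem:pshregmono}, which already identifies the regular monomorphisms of $\Psh(\mcc)$ as exactly the natural transformations whose components are isometric embeddings in $\pmet$. Once that is in hand, the present lemma is a translation between a representing regular mono and the componentwise subspace data, and the work splits cleanly into a forward passage (subobject $\rightsquigarrow$ data) and a converse passage (data $\rightsquigarrow$ subobject), which I will check are mutually inverse.

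For the forward direction I would choose a representative regular mono $\mu\colon F' \to F$ of the given subobject. By Lemma \ref{lem:pshregmono} each component $\mu_a\colon F'a \to Fa$ is an isometric embedding, so I set $\mu(a) := \im(\mu_a) \subseteq Fa$, a subspace. For a morphism $f\colon a \to b$ in $\mcc$, naturality of $\mu$ gives a commuting square relating $F'f$, $Ff$, $\mu_a$ and $\mu_b$; since $F$ is contravariant we have $Ff\colon Fb \to Fa$, and reading the square under the identifications $F'a \cong \mu(a)$, $F'b \cong \mu(b)$ shows that $Ff$ carries $\mu(b)$ into $\mu(a)$ with restriction (up to the identifying isometries) equal to $F'f$. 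I define $\mu(f)$ to be this restriction. I then need that the data is independent of the chosen representative: two equivalent monos $\mu\colon F' \to F$ and $\mu'\colon F'' \to F$ differ by an isomorphism $\phi\colon F' \to F''$ over $F$, so componentwise $\mu'_a \phi_a = \mu_a$ with $\phi_a$ an isometry, whence $\im(\mu_a) = \im(\mu'_a)$. Thus the subspaces $\mu(a)$, and consequently the restriction maps $\mu(f)$, are intrinsic to the subobject.

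For the converse, given subspaces $\mu(a) \subseteq Fa$ such that each $Ff$ restricts to a map $\mu(b) \to \mu(a)$, I would build a functor $F'\colon \Cop \to \pmet$ by $F'a = \mu(a)$ (with the subspace metric) and $F'f = Ff|_{\mu(b)}$. The restriction hypothesis is precisely what makes $F'f$ well defined with codomain $\mu(a)$; that $F'f$ is $1$-Lipschitz is inherited from $Ff$, and preservation of identities and composites is inherited from the functoriality of $F$. The componentwise inclusions then assemble into a natural transformation $\mu\colon F' \to F$ whose naturality squares commute by construction, since each $F'f$ is a restriction of $Ff$. Its components are isometric embeddings, so Lemma \ref{lem:pshregmono} yields that $\mu$ is a regular mono, determining a subobject. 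Finally I would observe that the two passages are mutually inverse: extracting images from this $\mu$ returns the $\mu(a)$, and restricting $Ff$ returns $\mu(f)$.

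The argument is essentially bookkeeping once Lemma \ref{lem:pshregmono} is available, and I do not expect a genuine obstacle. The only points demanding care are the contravariance of $F$, which forces the compatibility condition to read $\mu(b) \to \mu(a)$ rather than the reverse, and the well-definedness step in the forward direction, where I use the rigidity of isometric embeddings — equivalent monos have literally equal images — to see that the subspace data depends only on the subobject and not on the representative.
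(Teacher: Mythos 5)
Your proposal is correct and matches the paper's intent exactly: the paper gives no separate proof of this lemma, presenting it as an immediate formalization of the preceding discussion that (by Lemma \ref{lem:pshregmono}) regular monos in $\Psh(\mcc)$ are precisely the natural transformations with isometric-embedding components. Your write-up simply supplies the routine details (taking componentwise images, checking representative-independence, and reassembling the data into a pointwise isometric embedding) that the paper leaves implicit.
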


The above lemma enables us to think of subobjects of $F$ interchangeably with ``compatible'' assignments of subspaces of $Fa$ for each $a \in \mcc$, i.e. natural transformations into $F$ with each component an isometric embedding. We henceforth reserve the right to abuse notation by speaking of subobjects of $F$ as if they were functors whose values on each object $a \in \mcc$ are subspaces of $Fa$.

For any $F \in \Psh(\mcc)$, we know what the poset structure of $\Sub F$ is, since we know what regular monomorphisms look like. We give a useful equivalent description in the following lemma.

\begin{lemma}
\label{lem:sameposet}

Given $\mu, \nu \in \Sub F$, we have $\mu \leq \nu$ iff for all $a \in \mcc$, $\mu(a) \leq \nu(a)$ in $\Sub Fa$.

\end{lemma}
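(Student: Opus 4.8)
The plan is to unwind the order on $\Sub F$ and reduce everything to the explicit description of subobjects furnished by Lemma \ref{lem:whataresubs}. Recall that for $\mu, \nu \in \Sub F$ the relation $\mu \leq \nu$ means precisely that there is a morphism in the slice $\Psh(\mcc)/F$ from a representative of $\mu$ to a representative of $\nu$, i.e. a natural transformation $\theta$ with $\nu \circ \theta = \mu$; since every mono in sight is componentwise an isometric embedding, such a $\theta$ is unique when it exists. Using the abuse of notation licensed by Lemma \ref{lem:whataresubs}, I will treat $\mu$ and $\nu$ as assignments of subspaces $\mu(a), \nu(a) \subseteq Fa$.

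For the forward direction I would simply evaluate at components. If $\mu \leq \nu$ is witnessed by $\theta$, then for each $a \in \mcc$ the equation $\nu_a \circ \theta_a = \mu_a$ exhibits $\mu_a$ as factoring through the isometric embedding $\nu_a$, which is exactly the assertion $\mu(a) \leq \nu(a)$ in $\Sub Fa$.

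For the reverse direction I would construct the witnessing $\theta$ by hand. Assuming $\mu(a) \subseteq \nu(a)$ for every $a$ as subspaces of $Fa$, take $\theta_a : \mu(a) \to \nu(a)$ to be the inclusion; this is an isometric embedding, in particular a morphism of $\pmet$. Then $\nu_a \circ \theta_a$ is the composite inclusion $\mu(a) \hookrightarrow \nu(a) \hookrightarrow Fa$, which is $\mu_a$, so $\nu \circ \theta = \mu$ holds automatically. It remains to check that $\theta$ is natural, which is the only real point and the step I expect to be the crux, although it stays routine.

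For $f : a \to b$ in $\mcc$, naturality of $\theta$ asks that the square whose horizontal maps are the inclusions $\theta_b, \theta_a$ and whose vertical maps are the restrictions $Ff : \mu(b) \to \mu(a)$ and $Ff : \nu(b) \to \nu(a)$ (which exist by Lemma \ref{lem:whataresubs}) commute. But every arrow in the square is a restriction of the single map $Ff : Fb \to Fa$, so both composites send $x \in \mu(b)$ to $Ff(x)$, landing in $\mu(a) \subseteq \nu(a)$; hence the square commutes on the nose. This produces the required $\theta$ and therefore $\mu \leq \nu$, completing the argument. I anticipate no genuine obstacle: the whole content is that subobjects of $F$ are ``restriction-compatible'' families of subspaces, so comparison of subobjects can be tested one object of $\mcc$ at a time, with naturality coming for free from the common map $Ff$.
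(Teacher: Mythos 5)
Your proof is correct and follows essentially the same route as the paper: the forward direction by evaluating componentwise, and the converse by assembling the componentwise inclusions into a natural transformation whose naturality squares commute automatically because every arrow is a restriction of the single map $Ff$. No issues.
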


\begin{proof}

The ``only if'' direction is clear. We show that $\mu(a) \leq \nu(a)$ for all $a \in \mcc$ implies $\mu \leq \nu$. Having $\mu, \nu \in \Sub F$ and $\mu(a) \leq \nu(a)$ for all $a \in \mcc$ gives us each arrow of the following diagram
\begin{equation*}
\begin{tikzcd}
\mu(b)
\arrow{r}{Ff}
\arrow[hookrightarrow]{d}
	& \mu(a)
	\arrow[hookrightarrow]{d} \\
\nu(b)
\arrow{r}{Ff}
	& \nu(a)
\end{tikzcd}
\end{equation*}
which we a posteriori conclude is commutative. This gives us $\mu \leq \nu$.

\end{proof}

\begin{proposition}
\label{prop:pshreg}
\item
\begin{enumerate}

\item $\Psh(\mcc)$ has r-images.\label{pshims}

\item These r-images are preserved under pullback.\label{pshimspb}

\item Regular monos in $\Psh(\mcc)$ are closed under composition.\label{pshregmonoscomp}

\end{enumerate}

Therefore, $\Psh(\mcc)$ is an r-regular category.

\end{proposition}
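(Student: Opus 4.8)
The plan is to reduce every clause to the pointwise behavior in $\pmet$, exploiting two facts already in hand: by Lemma \ref{lem:pshregmono} the regular monos of $\Psh(\mcc)$ are precisely the natural transformations that are componentwise regular monos in $\pmet$, and $\pmet$ is itself r-geometric (being an admissible choice of $\pmetv$, cf. Proposition \ref{prop:metheyt}). Throughout I would use that finite limits in $\Psh(\mcc)$—in particular pullbacks—are computed pointwise (Lemma \ref{lem:pshlims}), and that in $\pmet$ regular monos are isometric embeddings while r-images are just set-theoretic images carrying the subspace metric of the codomain.

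For (\ref{pshims}), given $\mu: F \to G$ I would define $\im\mu$ pointwise by letting $(\im\mu)(a) \subseteq Ga$ be the r-image of $\mu_a$ in $\pmet$ for each $a \in \mcc$. Naturality of $\mu$ ensures that each $Gf$ restricts to a map $(\im\mu)(b) \to (\im\mu)(a)$, so by Lemma \ref{lem:whataresubs} this data assembles into a genuine subobject of $G$, and $\mu$ factors as $F \to \im\mu \hookrightarrow G$ with the inclusion a regular mono by Lemma \ref{lem:pshregmono} (naturality of the first map being routine from naturality of $\mu$). To verify the universal property, given any competing factorization $\mu = m e'$ with $m$ a regular mono, I would apply the pointwise universal property of r-images in $\pmet$ to obtain, for each $a$, a unique diagonal filler; the uniqueness clause then forces these components to be natural, since both routes around any naturality square serve as fillers for the same diagram and must therefore agree. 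This yields the required unique diagonal in $\Psh(\mcc)$.

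For (\ref{pshimspb}), since both pullbacks and r-images in $\Psh(\mcc)$ are computed pointwise and $\pmet$ has pullback-stable r-images (being r-regular), the claim is immediate: for $g: H \to G$ the component $(g^\ast i)_a = g_a^\ast i_a$ is the r-image of $(g^\ast \mu)_a = g_a^\ast \mu_a$ in $\pmet$, so $g^\ast i$ is componentwise—hence by (\ref{pshims}) genuinely—the r-image of $g^\ast \mu$. For (\ref{pshregmonoscomp}), composable regular monos in $\Psh(\mcc)$ are componentwise isometric embeddings in $\pmet$, whose composites are again isometric embeddings (trivially, since isomorphisms in $\pmet$ are isometries), so Lemma \ref{lem:pshregmono} shows the composite is again a regular mono. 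Combining (\ref{pshims})–(\ref{pshregmonoscomp}) with the finite limits of Lemma \ref{lem:pshlims} verifies all four clauses of Definition \ref{def:rreg}, whence $\Psh(\mcc)$ is r-regular.

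The main obstacle I anticipate is the universal property in (\ref{pshims}): one must check not merely that each component admits the correct r-image factorization in $\pmet$, but that the pointwise diagonal fillers cohere into a single natural transformation. This is the one step where naturality is not automatic and must be deduced from the uniqueness half of the r-image universal property rather than assumed; the remaining clauses are then formal consequences of the pointwise computation of limits.
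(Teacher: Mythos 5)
Your proposal is correct and follows essentially the same route as the paper: construct the r-image pointwise, check that the pointwise data assembles into a presheaf and a natural factorization, and deduce naturality of the comparison map to any competing regular-mono factorization from uniqueness of the pointwise fillers (equivalently, from $m_a$ being monic), with (\ref{pshimspb}) and (\ref{pshregmonoscomp}) then following formally from the pointwise computation of pullbacks and the characterization of regular monos as componentwise isometric embeddings. The only cosmetic difference is that you establish functoriality of the image presheaf directly from the concrete description of r-images in $\pmet$ as set-theoretic images with the subspace metric, whereas the paper derives the maps $Df$ from the equalizer presentation of $i_a$ together with $e_b$ being epi; both are valid.
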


\begin{proof}
\item
(\ref{pshims}): $\pmet$ itself has $r$-images, with every morphism $f: X \rightarrow Y$ in $\pmet$ factoring as $f = ie$ for some epi
\begin{tikzcd}
X
\arrow[twoheadrightarrow]{r}{e}
	&A
\end{tikzcd}
and a regular mono
\begin{tikzcd}
A
\arrow[tail]{r}{i}
	& Y
\end{tikzcd}
serving as the r-image.

Now given $F, G \in \Psh(\mcc)$ and $\varphi: F \rightarrow G$, for each $a \in \mcc$ we can factorize $\varphi_a: Fa \rightarrow Ga$ as $\varphi_a = e_a i_a$ for some epi
\begin{tikzcd}
Fa
\arrow[twoheadrightarrow]{r}{e_a}
	&Da
\end{tikzcd}
and a regular mono
\begin{tikzcd}
Da
\arrow[tail]{r}{i_a}
	& Ga
\end{tikzcd}
serving as the r-image.

This determines the values of a hypothetical functor $D: \Cop \rightarrow \pmet$ on the objects of $\mcc$; we show that this extends to an honest functor $D: \Cop \rightarrow \pmet$ so that there are natural transformations
\begin{tikzcd}
F
\arrow[twoheadrightarrow]{r}{e}
	& D
\end{tikzcd} and
\begin{tikzcd}
D
\arrow[tail]{r}{i}
	& G
\end{tikzcd}
with components $e_a$ and $i_a$ respectively for each $a \in \mcc$.

We specify the values of $D: \Cop \rightarrow \pmet$ on morphisms $f: a \rightarrow b$ of $\mcc$ by the following diagram:
\begin{equation*}
\begin{tikzcd}
Fa
\arrow{rr}[pos=0.4]{\varphi_a}
\arrow[twoheadrightarrow]{dr}{e_a}
	&
		& Ga
		\arrow[shift left]{r}{h}
		\arrow[shift right]{r}[swap]{k}
			& \cdot \\
	& Da
	\arrow[tail]{ur}{i_a}
		&
			& \\
Fb
\arrow{uu}{Ff}
\arrow{rr}[pos=0.3]{\varphi_b}
\arrow[twoheadrightarrow]{dr}[swap]{e_b}
	&
		& Gb
		\arrow{uu}{Gf}
			& \\
	& Db
	\arrow[dotted, crossing over]{uu}[swap, pos=0.75]{\exists ! \; Df} 
	\arrow[tail]{ur}[swap]{i_b}
\end{tikzcd}
\end{equation*}
where $h$ and $k$ were chosen to be a pair of parallel morphisms for which $i_a$ is an equalizer, and we argue for the unique existence of the dotted morphism (which we suggestively call $Df$) as follows.

The unique existence of $Df: Db \rightarrow Da$ is guaranteed if we show that $h \circ Gf \circ i_b = k \circ Gf \circ i_b$. Since $e_b$ is epi, it suffices to show that $h \circ Gf \circ i_b \circ e_b = k \circ Gf \circ i_b \circ e_b$. But we have that
\begin{equation*}
Gf \circ i_b \circ e_b
= Gf \circ \varphi_b
= \varphi_a \circ Ff
= i_a \circ e_a \circ Ff
\end{equation*}
so $h \circ Gf \circ i_b \circ e_b = k \circ Gf \circ i_b \circ e_b$ is true.

The uniqueness of $Df$ for each $f: a \rightarrow b$ guarantees functoriality, so therefore we have specified the data of a functor $D: \Cop \rightarrow \pmet$. Moreover, we have that $i_a \circ Df = Gf \circ i_b$ for each $f: a \rightarrow b$, so the components
\begin{tikzcd}
Da
\arrow[tail]{r}{i_a}
	& Ga
\end{tikzcd}
for each $a \in \mcc$ assemble into a natural transformation
\begin{tikzcd}
D
\arrow[tail]{r}{i}
	& G
\end{tikzcd}.

To see that the components
\begin{tikzcd}
Fa
\arrow[twoheadrightarrow]{r}{e_a}
	& Da
\end{tikzcd}
for each $a \in \mcc$ also assemble into a natural transformation
\begin{tikzcd}
F
\arrow[twoheadrightarrow]{r}{e}
	& D
\end{tikzcd}, we note that by construction (and by naturality of $\varphi$) we have $i_a \circ e_a \circ Ff = i_a \circ Df \circ e_b$, so that $e_a \circ Ff = Df \circ e_b$ since $i_a$ is monic.

We have thus factored $\varphi: F \rightarrow G$ into $\varphi = i e$ where
\begin{tikzcd}
F
\arrow{r}{e}
	& D
\end{tikzcd} is pointwise epi and
\begin{tikzcd}
D
\arrow{r}{i}
	& G
\end{tikzcd} is pointwise an r-image (in particular a regular mono) in $\pmet$. We now show that in this factorization, $i$ is actually an r-image in $\Psh(\mcc)$.

For any other factorization $\varphi = me^\prime$ where
\begin{tikzcd}
D^\prime
\arrow[tail]{r}
	& G
\end{tikzcd}
is a regular mono (and therefore pointwise a regular mono), we have the following diagram:
\begin{equation*}
\begin{tikzcd}[row sep = 3em, column sep = 5em]
	& D^\prime a
	\arrow[tail]{drr}{m_a}
	\arrow[leftarrow]{ddd}[pos=0.33]{D^\prime f}
		&
			& \\
Fa
\arrow{ur}{e^\prime_a}
\arrow[twoheadrightarrow, crossing over]{drr}[pos=0.67]{e_a}
	&
		&
			& Ga \\
	&
		& Da
		\arrow[tail]{ur}{i_a}
		\arrow[tail, dotted]{uul}[swap]{\exists ! \; \psi_a}
			& \\
	& D^\prime b
	\arrow[tail]{drr}[pos=.67]{m_b}
		&
			& \\
Fb
\arrow{uuu}{Ff}
\arrow{ur}{e^\prime_b}
\arrow[twoheadrightarrow]{drr}{e_b}
	&
		&
			& Gb
			\arrow{uuu}{Gf} \\
	&
		& Db
		\arrow[tail]{ur}{i_b}
		\arrow[crossing over]{uuu}[pos=0.67]{Df}
		\arrow[tail, dotted]{uul}{\exists ! \; \psi_b}
			&
\end{tikzcd}
\end{equation*}
where we have unique existence of the dotted morphisms because $i_a$ and $i_b$ are r-images, and where the solid morphisms all commute and the top and bottom faces (including the dotted morphisms) individually commute. To check that the morphisms
\begin{tikzcd}
Da
\arrow[tail]{r}{\psi_a}
	& D^\prime a
\end{tikzcd}
for each $a \in \mcc$ assemble into a natural transformation
\begin{tikzcd}
D
\arrow[tail]{r}{\psi}
	& D^\prime
\end{tikzcd}, we check that $\psi_a \circ Df = D^\prime f \circ \psi_b$, which is equivalent to $m_a \circ \psi_a \circ Df = m_a \circ D^\prime f \circ \psi_b$ since $m_a$ is monic. But this last equality is true since
\begin{equation*}
m_a \circ \psi_a \circ Df = i_a \circ D_f = Gf \circ i_b = Gf \circ m_b \circ \psi_b = m_a \circ D^\prime f \circ \psi_b.
\end{equation*}
The resulting natural transformation
\begin{tikzcd}
D
\arrow[tail]{r}{\psi}
	& D^\prime
\end{tikzcd}
is both a regular mono and unique for formal reasons.

Also, for the same reasons, r-image factorizations are unique up to isomorphism so that all r-image factorizations arise in the way described above, i.e. as pointwise $r$-images following pointwise epis.

(\ref{pshimspb}): Let $\varphi: F \rightarrow G$ have an r-image factorization $\varphi = ie$, and let $\psi: H \rightarrow G$ be some natural transformation. We want to show that when we pull back across $\psi$ as in the diagram below
\begin{equation*}
\begin{tikzcd}
\psi^\ast F
\arrow{d}{\psi^\ast e}
\arrow{r}
	& F
	\arrow{d}{e} \\
\cdot
\arrow{d}{\psi^\ast i}
\arrow{r}
	& \cdot
	\arrow{d}{i} \\
H
\arrow{r}{\psi}
	& G
\end{tikzcd}
\end{equation*}
we have that $\psi^\ast \varphi = (\psi^\ast i) \circ (\psi^\ast e)$ is an r-image factorization. Now pullbacks in $\Psh(\mcc)$ are just pointwise pullbacks in $\pmet$, and we know that pullbacks in $\pmet$ preserve r-images and epis in $\pmet$. Therefore for each $a \in \mcc$, we have that $\psi^\ast \varphi_a = (\psi^\ast i_a) \circ (\psi^\ast e_a)$ is an r-image factorization in $\pmet$ with $\psi^\ast i_a$ a regular mono and $\psi^\ast e_a$ an epi (since all r-image factorizations in $\Psh(\mcc)$ arise as pointwise r-image factorizations in $\pmet$).

The (second half of the) proof of (\ref{pshims}) shows that when we factorize $\psi^\ast \varphi = (\psi^\ast i) \circ (\psi^\ast e)$ such that componentwise $\psi^\ast \varphi_a = (\psi^\ast i_a) \circ (\psi^\ast e_a)$ is an r-image factorization in $\pmet$ with $\psi^\ast i_a$ a regular mono and $\psi^\ast e_a$ an epi, then $\psi^\ast \varphi = (\psi^\ast i) \circ (\psi^\ast e)$ itself is an r-image factorization in $\Psh(\mcc)$.

(\ref{pshregmonoscomp}): Regular monos in $\Psh(\mcc)$ are just natural transformations whose components are isometric embeddings, and clearly these are closed under composition.

\end{proof}

\begin{proposition}
\label{prop:pshgeom}

$\Psh(\mcc)$ is an r-geometric category.

\end{proposition}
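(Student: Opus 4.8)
By Proposition \ref{prop:pshreg}, $\Psh(\mcc)$ is already r-regular, so by Definition \ref{def:rreg} it remains only to show that for each $F \in \Psh(\mcc)$ the poset $\Sub F$ is a small-complete lattice, and that every pullback functor $\underline{\varphi}^\ast$ preserves this structure. The plan is to transport everything pointwise from $\pmet$, which is itself r-geometric by Proposition \ref{prop:metheyt}, using the componentwise descriptions of subobjects and of their order supplied by Lemma \ref{lem:whataresubs} and Lemma \ref{lem:sameposet}.

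First I would construct meets and joins in $\Sub F$ componentwise. Given a small family $\{\mu_i\}_i \subseteq \Sub F$, thought of via Lemma \ref{lem:whataresubs} as compatible assignments of subspaces $\mu_i(a) \subseteq Fa$, I define $(\bigwedge_i \mu_i)(a)$ and $(\bigvee_i \mu_i)(a)$ to be respectively the meet and join of the $\mu_i(a)$ taken in $\Sub Fa$ — that is, the intersection and union of the underlying subspaces, which exist because $\Sub Fa$ is complete. To know these assignments are genuine subobjects I must check the compatibility clause of Lemma \ref{lem:whataresubs}, namely that each $Ff: Fb \rightarrow Fa$ restricts appropriately. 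For the intersection this is immediate, since $Ff$ restricts to each $\mu_i$ and hence carries $\bigcap_i \mu_i(b)$ into $\bigcap_i \mu_i(a)$; for the union it is a one-line point-chase, as any $x \in \bigcup_i \mu_i(b)$ lies in some $\mu_j(b)$, whence $Ff(x) \in \mu_j(a) \subseteq \bigcup_i \mu_i(a)$. Having produced honest subobjects, Lemma \ref{lem:sameposet} — which says the order on $\Sub F$ is computed pointwise — immediately identifies them as the categorical meet and join in $\Sub F$. Small-completeness and r-wellpoweredness are unproblematic, since $\mcc$ is small and $\Sub F$ thus embeds into the set $\prod_{a} \Sub Fa$.

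Next I would verify pullback preservation. By Lemma \ref{lem:pshlims}, limits — and in particular pullbacks — in $\Psh(\mcc)$ are computed pointwise, so for any $\varphi: F \rightarrow G$ the pullback functor acts componentwise, $(\underline{\varphi}^\ast \mu)(a) = \underline{\varphi_a}^\ast(\mu(a))$. Because $\pmet$ is r-geometric, each $\underline{\varphi_a}^\ast: \Sub Ga \rightarrow \Sub Fa$ preserves arbitrary meets and joins; combining this with the componentwise computation of meets and joins in $\Sub F$ and $\Sub G$ from the previous step, the equalities $(\underline{\varphi}^\ast \bigwedge_i \mu_i)(a) = \bigwedge_i (\underline{\varphi}^\ast \mu_i)(a)$ and the analogue for joins hold at every $a$, so $\underline{\varphi}^\ast$ preserves the lattice structure. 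This establishes r-geometricity.

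The only step requiring genuine argument is checking that the pointwise \emph{union} assembles into a subobject of $F$, i.e. its compatibility with the restriction maps $Ff$; unlike the intersection this is not purely formal, but it reduces to the short point-chase above. Everything else is a mechanical transfer of the already-established r-geometric structure of $\pmet$ through the pointwise characterizations of Lemmas \ref{lem:whataresubs}, \ref{lem:sameposet}, and \ref{lem:pshlims}.
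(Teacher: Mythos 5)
Your proposal is correct and follows essentially the same route as the paper: both construct meets and joins in $\Sub F$ pointwise from the complete lattices $\Sub Fa$, check compatibility with the restriction maps $Ff$, identify the result as the categorical meet/join via Lemma \ref{lem:sameposet}, and observe that pullback, being computed pointwise, preserves this structure. Your explicit point-chase for the join is a detail the paper leaves as ``entirely analogous,'' but it is the same argument.
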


\begin{proof}

Let $F \in \Psh(\mcc)$. We show that $\Sub F$ is a small-complete lattice and that pulling back across any $\varphi: G \rightarrow F$ preserves this structure.

Given $\mu_i \in \Sub F$, we have the data $\mu_i (a) \in \Sub \left ( Fa \right )$ for each $a \in \mcc$. We verify that setting $\left ( \bigwedge\limits_i \mu_i \right ) (a) = \bigwedge\limits_i \left ( \mu_i (a) \right )$ defines the meet of the $\mu_i$; the verification that $\left ( \bigvee\limits_i \mu_i \right ) (a) = \bigvee\limits_i \left ( \mu_i (a) \right )$ defines the join of the $\mu_i$ is entirely analogous.

Therefore given $\mu_i \in \Sub F$, we define $\mu \in \Sub F$ by $\mu(a) = \bigwedge\limits_i \left ( \mu_i (a) \right )$. Certainly $\mu(a) \in \Sub Fa$. Given a morphism $f: a \rightarrow b$ in $\mcc$, we certainly have that $Ff: Fb \rightarrow Fa$ restricts to a map $\mu(f): \mu(b) \rightarrow \mu(a)$. We thus have a subobject $\mu \in \Sub F$ which is the pointwise meet of the $\mu_i$.

Now given any $\nu \in \Sub F$ such that $\nu \leq \mu_i$, we need to verify that $\nu \leq \mu$. We have that $\nu (a) \leq \mu_i (a)$ for all $a \in \mcc$, so we have that $\nu(a) \leq \mu(a)$ for all $a \in \mcc$. By Lemma \ref{lem:sameposet} we conclude that $\nu \leq \mu$.

We have shown that meets (and joins, by an analogous argument) in $\Sub F$ are pointwise meets (and pointwise joins). Pullback across $\varphi: F \rightarrow G$ is given by pointwise pullback, which preserves pointwise meets and joins. Therefore pullbacks preserve meets and joins in $\Psh(\mcc)$.

\end{proof}

We now have all the basic structure on $\Psh(\mcc)$ to repeat our setup from $\pmetu$ in defining a metrization. Since each $F \in \Psh(\mcc)$ takes values in $\pmet$, our discussion following Definition \ref{def:metrization} suggests that we should take $E_1 = \{1_{\mathbb{I}}\} \subseteq [ \mathbb{I} , \mathbb{I} ]$ to be our $\mathbb{I}$-moduloid.

\begin{proposition}
\label{prop:pshmetrization}

$\Psh(\mcc)$ has a metrization compatible with $E_1$.

\end{proposition}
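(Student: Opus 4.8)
The plan is to construct the entire metrization of $\Psh(\mcc)$ fiberwise from the metrization of $\pmet$, which by Proposition \ref{prop:metricprops} and Lemma \ref{lem:continuity} exists and is compatible with $E_1$. The guiding principle is that every relevant piece of structure in $\Psh(\mcc)$ — finite limits (Lemma \ref{lem:pshlims}), subobjects (Lemma \ref{lem:whataresubs}), meets and pullbacks (Proposition \ref{prop:pshgeom}), and the subobject ordering (Lemma \ref{lem:sameposet}) — is computed pointwise, so each requirement of Definition \ref{def:metrization} should reduce to the corresponding fact in $\pmet$. First I would fix the product by setting $(F \times G)(a) = Fa \times Ga$ with the max-metric product of $\pmet$ on each fiber; since limits in $\Psh(\mcc)$ are pointwise, this is a product, and its well-behavedness with respect to symmetry and associativity is inherited fiberwise from $\pmet$.

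Next I would define $D_F \in [\Iop, \Sub(F \times F)]$ by $D_F(r)(a) = \{(x,y) \in Fa \times Fa : d_{Fa}(x,y) \leq r\}$, i.e.\ the metric datum $D_{Fa}(r)$ of the pseudometric space $Fa$. By Lemma \ref{lem:whataresubs}, the essential check is that this fiberwise assignment genuinely defines a subobject of $F \times F$: for each $f : a \rightarrow b$ in $\mcc$ the map $Ff \times Ff$ must restrict to a map $D_{Fb}(r) \rightarrow D_{Fa}(r)$. This is precisely the statement that $Ff$ is $1$-Lipschitz, which holds because $F$ takes values in $\pmet$. I expect this well-definedness step to be the main obstacle — not because it is difficult, but because it is the one point with genuine content, and it is worth highlighting that the same $1$-Lipschitz condition will simultaneously deliver compatibility with $E_1$ below.

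Then I would verify the five axioms of Definition \ref{def:metrization}. Since meets, pullbacks, and the ordering on $\Sub F$ are all pointwise, each axiom collapses to the corresponding axiom for $D_{Fa}$ in $\pmet$, supplied by Proposition \ref{prop:metricprops}: $D_F(0)$ contains the diagonal because each $D_{Fa}(0)$ does; symmetry and the triangle inequality hold fiberwise once one observes that the projections appearing in the axioms act pointwise as the corresponding projections in $\pmet$; the product compatibility (\ref{prodmetric2}) is likewise the fiberwise max-metric identity; and the infimum condition follows from $d_{Fa}(x,y) \leq \inf_i r_i$ being equivalent to $d_{Fa}(x,y) \leq r_i$ for all $i$, combined with the pointwise computation of meets.

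Finally, for compatibility with $E_1$ I would take an arbitrary natural transformation $\varphi : F \rightarrow G$ and show $D_F(r) \leq \underline{\varphi \times \varphi}^{\ast} D_G(r)$. Each component $\varphi_a : Fa \rightarrow Ga$ is a morphism in $\pmet$, hence $1$-Lipschitz, so the $\pmet$ case of Lemma \ref{lem:continuity} gives $D_{Fa}(r) \leq \underline{\varphi_a \times \varphi_a}^{\ast} D_{Ga}(r)$ on each fiber; since both the pullback and the ordering are pointwise by Lemma \ref{lem:sameposet}, the inequality holds in $\Sub(F \times F)$. This establishes the metrization and its compatibility with $E_1 = \{1_{\mathbb{I}}\}$, completing the proof.
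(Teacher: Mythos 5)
Your proposal is correct and follows essentially the same route as the paper: define the product and $D_F$ fiberwise from the metrization of $\pmet$, use the $1$-Lipschitz condition on the structure maps $Ff$ together with Lemma \ref{lem:whataresubs} to see that each $D_F(r)$ is a genuine subobject, and then check all axioms (and compatibility with $E_1$) pointwise via Lemma \ref{lem:sameposet}. Your writeup is in fact slightly more explicit than the paper's, which compresses the axiom verification and the $E_1$-compatibility into a single remark that everything is inherited pointwise.
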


\begin{proof}

$\pmet$ already has a metrization compatible with $E_1$. To any $F, G \in \Psh(\mcc)$, we can make the choice of product $F \times G$ given by $(F \times G) a = Fa \times Ga$ for each $a \in \mcc$ where the latter product is the choice of product given by the metrization of $\pmet$.

Now for each $F \in \Psh(\mcc)$, we define $D_F \in [\Iop, \Sub (F \times F) ]$ as follows.

For each $r \in \Iop$, set $D_F (r) \in \Sub (F \times F)$ to be the subobject of $F \times F$ which assigns $D_{Fa} (r)$ to each $a \in \mcc$. For any morphism $f: a \rightarrow b$ of $\mcc$ we have that $Ff: Fb \rightarrow Fa$ is $1$-Lipschitz, so certainly we have that $Ff \times Ff : (Fb \times Fb) \rightarrow (Fa \times Fa)$ restricts to $Ff \times Ff: D_{Fb}(r) \rightarrow D_{Fa}(r)$. Therefore by Lemma \ref{lem:whataresubs} at least each $D_F(r)$ as we have defined it is actually a subobject of $F \times F$.

To see that $r \mapsto D_F(r)$ defines a functor $D_F: \Iop \rightarrow \Sub (F \times F)$, we check that whenever $r \leq s$, we also have $D_F(r) \leq D_F(s)$. But if $r \leq s$ we have that $D_F(r) (a) \leq D_F(s) (a)$ for each $a \in \mcc$, therefore by Lemma \ref{lem:sameposet} we have that $D_F(r) \leq D_F(s)$.

It is a straightforward verification that this construction satisfies all of the conditions of Definition \ref{def:metrization}; the main idea is that all of the structure involved is inherited pointwise from $\pmet$, and thus already satisfies the conditions, which can also all be checked pointwise.

\end{proof}

We have therefore established a notion of ``metric'' on each $F: \Iop \rightarrow \pmet$ with attendant notions of continuity, (continuous) predicates, (continuous) quantification, etc. As a corollary Definitions \ref{def:econtsubobj2} and \ref{def:predclass} make sense in $\Psh(\mcc)$, and also the results of Proposition \ref{prop:metrizationcors} and \ref{prop:freeresults} apply to $\Psh(\mcc)$.

Note that since our $\mathbb{I}$-moduloid $E$ is trivial ($E$ only contains $1_{\mathbb{I}}: \mathbb{I} \rightarrow \mathbb{I}$), we have no need to specify moduli of continuity $\epsilon$; it is always just $1_{\mathbb{I}}$. Thus in our case, for any predicate $R$ on $X$ we have that $R$ is a $1_{\mathbb{I}}$-predicate on $X$, i.e. $R \in \Suboi X$. Since no confusion may result, for tidiness of notation we henceforth write $\Suboi X$ as $\Subi X$.

We now show that $\Psh(\mcc)$ in fact has a predicate classifier, in the sense of Definition \ref{def:predclass}.

\begin{theorem}

$\Psh(\mcc)$ has a predicate classifier.

\end{theorem}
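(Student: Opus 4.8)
The plan is to construct $\Omega$ as a ``real-valued sieve'' presheaf, mimicking the description of the subobject classifier in an ordinary presheaf topos as the presheaf of sieves. Concretely, I would set $\Omega(a)$ to be the set of all functions $v$ assigning to each morphism $g\colon b \to a$ of $\mcc$ a value $v(g) \in [0,1]$, subject to the single condition $v(gh) \le v(g)$ for every composable $h$ (the real-valued analogue of a sieve being closed under precomposition, with $0$ read as ``true''), metrized by the supremum metric $d(v,w) = \sup_g |v(g) - w(g)|$, which takes values in $[0,1]$ so that $\Omega(a)$ lands in $\pmet$. The functorial action of $k\colon a' \to a$ would send $v$ to $g' \mapsto v(kg')$, which is non-expansive and preserves the defining inequality, so that $\Omega \in \Psh(\mcc)$; its metric datum $D_\Omega$ is the one supplied by the metrization of Proposition~\ref{prop:pshmetrization}, i.e. $D_\Omega(s)(a) = D_{\Omega(a)}(s)$ pointwise. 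For the classifying predicate I would take $\mct_{\mathbb{I}} \in \Subi \Omega$ defined pointwise by $\mct_{\mathbb{I}}(r)(a) = \{v \in \Omega(a) \mid v(1_a) \le r\}$ (the analogue of $[0,r]$ together with the maximal-sieve condition).

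Next I would verify that $\mct_{\mathbb{I}}$ is genuinely a $1_{\mathbb{I}}$-predicate. That each $\mct_{\mathbb{I}}(r)$ is a subobject follows from $v(k) = v(1_a \circ k) \le v(1_a)$, so that evaluation at the identity only decreases under the restriction maps of $\Omega$; the meet condition is immediate from $r = \inf_i r_i$; and the continuity condition reduces pointwise, via Proposition~\ref{prop:pshmetrization}, to the elementary fact that $v(1_a) \le r$ and $d(v,w) \le s$ force $w(1_a) \le r + s$. The heart of the argument is then the classifying property. Given any predicate $R$ on $X \in \Psh(\mcc)$ (necessarily a $1_{\mathbb{I}}$-predicate, since here $E = E_1$), I would first observe that restricting to each object yields a $1_{\mathbb{I}}$-predicate $R_a$ on the honest metric space $Xa \in \pmet$, because by Propositions~\ref{prop:pshgeom} and~\ref{prop:pshmetrization} both meets and $D_X$ are computed pointwise. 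Applying Theorem~\ref{thm:classifier} inside $\pmet$ produces $1$-Lipschitz characteristic functions $f_{R_a}\colon Xa \to [0,1]$ with $f_{R_a}(x) = \inf\{r \mid x \in R(r)(a)\}$, which I would assemble into a candidate $f\colon X \to \Omega$ by $f_a(x)(g) = f_{R_b}(Xg(x))$ for $g\colon b \to a$.

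I would then check the three requirements on $f$: that each $f_a(x)$ lies in $\Omega(a)$ (the inequality $f_a(x)(gh) \le f_a(x)(g)$ comes from $R$ being a subobject, so that $Xg(x) \in R(r)(b)$ implies $X(gh)(x) \in R(r)(c)$); that $f$ is natural (a direct computation using contravariance of $X$ and the definition of the action of $\Omega$); and that $f$ is $1$-Lipschitz (using that each $f_{R_b}$ and each $Xg$ is non-expansive, then taking the supremum over $g$). The identity $f^{\ast}\mct_{\mathbb{I}} = R$ then falls out pointwise, since $f^{\ast}\mct_{\mathbb{I}}(r)(a) = \{x \mid f_a(x)(1_a) \le r\} = \{x \mid f_{R_a}(x) \le r\} = R(r)(a)$, the last step being Theorem~\ref{thm:classifier} in $\pmet$; Lemma~\ref{lem:sameposet} upgrades this to an equality of subobjects. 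For uniqueness I would use naturality to reduce any competing classifier $f'$ to its values at identities: naturality gives $f'_a(x)(g) = f'_b(Xg(x))(1_b)$, while the hypothesis $(f')^{\ast}\mct_{\mathbb{I}} = R$ forces $f'_b(x')(1_b) = \inf\{r \mid x' \in R(r)(b)\} = f_{R_b}(x')$, so that $f' = f$.

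The main obstacle I anticipate is not any single calculation but rather hitting on the correct definitions of $\Omega$ and $\mct_{\mathbb{I}}$ — in particular isolating the real-valued sieve condition $v(gh) \le v(g)$ and the supremum metric — after which all verifications become pointwise reductions to the already-established classifier Theorem~\ref{thm:classifier} for $\pmet$ together with the pointwise computation of meets, $D_X$, and pullbacks in $\Psh(\mcc)$. A secondary point requiring care is confirming that the supremum metric makes each $\Omega(a)$ an object of $\pmet$ (diameter $\le 1$ and $1$-Lipschitz transition maps) and that the pointwise $D_\Omega$ is exactly the datum demanded by the metrization, so that both the continuity condition for $\mct_{\mathbb{I}}$ and the Lipschitz bound for $f$ come out with modulus $1_{\mathbb{I}}$, as the definition of predicate classifier requires.
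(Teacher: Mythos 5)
Your proposal is correct and is essentially the paper's own construction in disguise: a ``real-valued sieve'' $v$ with $v(gh)\le v(g)$ corresponds bijectively to a functor $S\colon \Iop \rightarrow \mcs_a$ satisfying the infimum condition via $S(r)=\{g \mid v(g)\le r\}$, under which your supremum metric, your $\mct_{\mathbb{I}}(r)(a)=\{v\mid v(1_a)\le r\}$, and your classifying map $f_a(x)(g)=f_{R_b}(Xg(x))$ become exactly the paper's $d_a$, $\nu_a^{-1}([0,r])$, and $\varphi_a(x)(r)=\{g\mid Xg(x)\in R(r)(b)\}$. The verifications you outline (pointwise reduction of meets, $D$, and pullbacks, plus Theorem \ref{thm:classifier} in $\pmet$, plus the naturality argument for uniqueness) match the paper's.
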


\begin{proof}

To give a predicate classifier, we must specify an object $\Omega \in \Psh(\mcc)$ - and therefore also its metric $D_{\Omega} \in [\Iop, \Sub (\Omega \times \Omega) ]$ - along with a specified $\mct_{\mathbb{I}} \in \Subi \Omega$ such that for any $R \in \Subi F$ there is a unique $\varphi: F \rightarrow \Omega$ such that $R = \varphi^{\ast} \mct_{\mathbb{I}}$.

For each $a \in \mcc$, denote by $\mcs_a$ the poset of sieves on $a$, partially ordered by inclusion (sieves are just sets of morphisms) and regarded as a category. Let us define the set $| \Omega a |$ to be the set of functors $S: \Iop \rightarrow \mcs_a$ satisfying the following property:

For $r, r_i \in \mathbb{I}$ with $\inf\limits_i r_i = r$,
\begin{equation*}
S ( r ) = \bigwedge\limits_i \left ( S(r_i) \right ).
\end{equation*}

Now given $a, b \in \mcc$ and $f: b \rightarrow a$, we define $| \Omega f |: | \Omega a | \rightarrow | \Omega b |$ as the evident pullback action on sieves. That is, given $S \in | \Omega a |$ which is thus a functor $S: \Iop \rightarrow \mcs_a$, we specify the functor $| \Omega f | (S) : \Iop \rightarrow \mcs_b$ by
\begin{equation*}
| \Omega f | (S) (r) = f^{\ast} \left ( S(r) \right ) = \{ h: b^\prime \rightarrow b \mid fh \in S(r) \}.
\end{equation*}

So far we have specified the values, at the $\catsets$ level, of a hypothetical functor $\Omega: \mcc \rightarrow \pmet$. What remains is to put a metric on $| \Omega a |$ for each $a \in \mcc$ so that the set function $| \Omega f |: | \Omega a | \rightarrow | \Omega b |$ for each $f: b \rightarrow a$ is $1$-Lipschitz with respect to these metrics, and then to check that $\Omega$ is a classifier.

We define a (set) function $\nu_a : | \Omega a | \rightarrow [0,1]$ by
\begin{equation*}
\nu_a (S) = \inf \{ r \mid S(r) \text{ is the maximal sieve on $a$} \}
\end{equation*}
and a (set) function $d_a^-: | \Omega a | \times | \Omega a | \rightarrow [0,1]$ by
\begin{equation*}
d_a^- (S_1, S_2) = | \nu_a (S_1) - \nu_a (S_2) |.
\end{equation*}
(Note that, by construction of $| \Omega a |$, $\nu_a(S) \leq r$ iff $S(r)$ is the maximal sieve on $a$.)

We now define $d_a : | \Omega a | \times | \Omega a | \rightarrow [0,1]$ by
\begin{equation*}
d_a (S_1, S_2) = \sup\limits_{f: b \rightarrow a} d_b^- \left ( |\Omega f| (S_1), |\Omega f| (S_2) \right )
\end{equation*}
where the supremum is taken over all morphisms $f: b \rightarrow a$ in $\mcc$ with codomain $a$.

We claim that $d_a$ gives a metric on each $| \Omega a |$. Clearly $d_a^-$ is a metric for each $a \in \mcc$. Reflexivity and symmetry of $d_a$ are also clear; we check the triangle inequality for $d_a$.

Let $S_1, S_2, S_3 \in | \Omega a |$. Let $\delta = d_a ( S_1, S_3 ) = \sup\limits_{f: b \rightarrow a} d_b^- \left ( |\Omega f| (S_1), |\Omega f| (S_3) \right )$. For every $\delta_i < \delta$, there is some $f_i: b_i \rightarrow a$ such that $d_{b_i}^- \left ( |\Omega f_i | (S_1), | \Omega f_i | (S_3) \right ) > \delta_i$. By the triangle inequality for $d_{b_i}^-$, we have that
\begin{equation*}
d_{b_i}^- \left ( |\Omega f_i | (S_1), | \Omega f_i | (S_2) \right ) + d_{b_i}^- \left ( |\Omega f_i | (S_2), | \Omega f_i | (S_3) \right ) \geq d_{b_i}^- \left ( |\Omega f_i | (S_1), | \Omega f_i | (S_3) \right ) > \delta_i.
\end{equation*}
This shows that $d_a ( S_1, S_2) + d_a ( S_2, S_3) \geq \delta = d_a (S_1, S_3)$.

Now for each $a \in \mcc$, we define $\Omega a$ to be the metric space whose underlying set is $| \Omega a |$ and whose metric is $d_a$. It is easy to see that for each $f: b \rightarrow a$, the function $| \Omega f |: | \Omega a | \rightarrow | \Omega b |$ is the underlying set function of a $1$-Lipschitz map $\Omega f: \Omega a \rightarrow \Omega b$. It is clear that the assignment $f \mapsto |\Omega f|$, and therefore also $f \mapsto \Omega f$, is functorial. Thus we have a functor $\Omega: \mcc \rightarrow \pmet$. Proposition \ref{prop:pshmetrization} tells us that the ``pointwise metric'' $d_a$ on each $\Omega a$ gives us $D_{\Omega} \in [\Iop, \Sub (\Omega \times \Omega) ]$.

We now specify $\mct_{\mathbb{I}} \in \Subi \Omega$. We first define $\mct_{\mathbb{I}} \in [\Iop, \Sub \Omega ]$, and then show that $\mct_{\mathbb{I}}$ satisfies the conditions to be a ($1_{\mathbb{I}}$-)predicate on $\Omega$. For each $r \in \mathbb{I}$, we specify (a hypothetical) $\mct_{\mathbb{I}} (r) \in \Sub \Omega$ by the assignment $a \mapsto \nu_a^{-1}( [0,r] ) \subseteq \Omega a$ for each $a \in \mcc$. Given $f: b \rightarrow a$, $\Omega f: \Omega a \rightarrow \Omega b$ takes $\nu_a^{-1}( [0,r] )$ into $\nu_b^{-1}( [0,r] )$ since the pullback along $f: b \rightarrow a$ of the maximal sieve on $a$ is the maximal sieve on $b$, so we indeed have $\mct_{\mathbb{I}} (r) \in \Sub \Omega$. Clearly $\mct_{\mathbb{I}} (r) \leq \mct_{\mathbb{I}} (s)$ whenever $r \leq s$, thus we have defined $\mct_{\mathbb{I}} \in [\Iop, \Sub \Omega]$.

We show that $\mct_{\mathbb{I}}$ as given above satisfies
\begin{enumerate}

\item For $r, r_i \in \mathbb{I}$ with $\inf\limits_i r_i = r$,
\begin{equation*}
\mct_{\mathbb{I}} (r) = \bigwedge\limits_i \left ( \mct_{\mathbb{I}} (r_i) \right )
\end{equation*}\label{mctclosure}

\item For $r, s \in \mathbb{I}$,
\begin{equation*}
\left ( \underline{\pi_1}^{\ast} \mcti(r) \right ) \wedge D_{\Omega} (s) \leq \underline{\pi_2}^{\ast} \mcti(r + s)
\end{equation*}\label{mctcont}

\end{enumerate}
thereby showing that $\mct_{\mathbb{I}} \in \Subi \Omega$.

(\ref{mctclosure}): Let $r, r_i \in \mathbb{I}$ with $\inf\limits_i r_i = r$. For any $a \in \mcc$, we have that $\mcti(r) (a) = \nu_a^{-1}([0,r])$, therefore the set of functors $S: \Iop \rightarrow \mcs_a$ such that $S(r)$ is the maximal sieve on $a$.

Clearly $\mcti(r) (a) = \nu_a^{-1}([0,r]) \leq \bigwedge\limits_i \left ( \mct_{\mathbb{I}} (r_i) (a) \right )$. Now if $S \in \bigwedge\limits_i \left ( \mct_{\mathbb{I}} (r_i) (a) \right )$ then $S(r_i)$ is the maximal sieve on $a$ for all $i$, and therefore $\nu_a (S) \leq r$. Therefore we have $\bigwedge\limits_i \left ( \mct_{\mathbb{I}} (r_i) (a) \right ) \leq \mcti(r)(a)$ for all $a \in \mcc$.

(\ref{mctcont}): It suffices to check this condition pointwise, where it turns into
\begin{equation*}
\left ( \underline{\pi_1}^{\ast} \mcti(r)(a) \right ) \wedge D_{\Omega a} (s) \leq \underline{\pi_2}^{\ast} \mcti(r + s) (a)
\end{equation*}
where $D_{\Omega a}$ is just the object of $[\Iop, \Sub (\Omega a \times \Omega a)]$ naturally given by the metric $d_a$ of $\Omega a$. Thus the above condition translates into
\begin{equation*}
x \in \mcti(r)(a) \text{ and } d_a(x, y) \leq s \Longrightarrow y \in \mcti(r + s)(a).
\end{equation*}
$x \in \mcti(r)(a)$ means that $x$ is a functor $x: \Iop \rightarrow \mcs_a$ such that $x(\inf\limits_i r_i) = \bigwedge\limits_i x(r_i)$ and $x(r)$ is the maximal sieve on $a$. Now
\begin{equation*}
d_a(x,y) \leq s \Longleftrightarrow \sup\limits_{f: b \rightarrow a} \left | \nu_b ( |\Omega f| x) - \nu_b ( |\Omega f| y ) \right | \leq s
\end{equation*}
so in particular, taking $f$ to be the identity $1_a: a \rightarrow a$, we have $\nu_a (y) - \nu_a (x) \leq s$. Therefore $y ( r + s )$ is the maximal sieve on $a$ and $y \in \mcti(r + s) (a)$.

So far we have constructed $\Omega \in \Psh(\mcc)$ and $\mcti \in \Subi \Omega$. We now show that these satisfy the conditions given in Definition \ref{def:predclass} to be a predicate classifier. That is, for any $F \in \Psh(\mcc)$ and any $R \in \Subi F$, there is a unique morphism $\varphi: X \rightarrow \Omega$ such that $\varphi^{\ast} \mcti = R$.

Specifically, this means the following. Any $R \in \Subi F$ is precisely the data assigning a subspace $R(r) (a) \subseteq F a$ to every $r \in \mathbb{I}$ and $a \in \mcc$ so that:

\begin{enumerate}[label=$\circ$, ref=$\circ$]

\item For every $r \leq s$ and $a \in \mcc$, $R(r)(a) \subseteq R(s)(a)$;

\item For every $r \in \mathbb{I}$, and $f: b \rightarrow a$, $Ff: Fa \rightarrow Fb$ restricts to a map $Ff: R(r)(a) \rightarrow R(r)(b)$.

\end{enumerate}

We need to show that there is a unique $\varphi: F \rightarrow \Omega$ for which $\varphi^{\ast} \mcti = R$, which means specifying the data of maps $\varphi_a: Fa \rightarrow \Omega a$ for each $a \in \mcc$ satisfying the following conditions:

\begin{enumerate}[label=$\circ$, ref=$\circ$]

\item For each $f: b \rightarrow a$, $\varphi_b \circ Ff = \Omega f \circ \varphi_a$;

\item For each $r \in \mathbb{I}$ and $a \in \mcc$,
\begin{equation*}
\varphi_a ^{-1} \left ( \nu_a^{-1} ( [0, r] ) \right ) = R (r) (a).
\end{equation*}

\end{enumerate}

There is at least a \emph{set} function $\varphi_a$ for each $a \in \mcc$ satisfying the above conditions: for each $x \in Fa$, we define $\varphi_a (x): \Iop \rightarrow \mcs_a$ to be the functor such that, for all $r \in \mathbb{I}$,
\begin{equation*}
\varphi_a(x)(r) = \{ f: b \rightarrow a \mid Ff (x) \in R(r)(b) \}.
\end{equation*}

Unwinding the definitions, we see that this definition of $\varphi_a$ satisfies
\begin{equation*}
x \in \varphi_a^{-1} \left ( \nu_a^{-1}([0,r]) \right ) \Longleftrightarrow x \in R(r)(a),
\end{equation*}
since the former set is the set of points $x \in Fa$ for which the sieve $\{ f: b \rightarrow a \mid Ff (x) \in R(r)(b) \}$ is the maximal one. Moreover, for any $f: b \rightarrow a$ we have that $\varphi_b ( Ff(x) ): \Iop \rightarrow \mcs_b$ is the functor whose value at $r \in \mathbb{I}$ is the sieve $\{h: b^\prime \rightarrow b \mid Fh ( Ff(x) ) = F (fh) (x) \in R(r)(b^\prime) \}$. But this sieve is exactly $\Omega f ( \varphi_a (x) (r) )$. It is straightforward to verify that that this is the only way to define the set functions $\varphi_a$ which satisfy the above conditions. Thus the proof will be finished once we show that this definition of $\varphi_a$ is $1$-Lipschitz for each $a \in \mcc$.

Given $x, y \in Fa$, we verify that $d_a (\varphi_a (x), \varphi_a (y) ) \leq d_{Fa}(x,y)$.
\begin{equation*}
d_a (\varphi_a (x), \varphi_a (y) ) = \sup\limits_{f: b \rightarrow a} \left | \nu_b ( \Omega f (\varphi_a (x)) ) - \nu_b ( \Omega f (\varphi_a (y)) ) \right |
\end{equation*}
so it suffices to show that for any $f: b \rightarrow a$, we have
\begin{equation*}
\left | \nu_b ( \Omega f (\varphi_a (x)) ) - \nu_b ( \Omega f (\varphi_a (y)) ) \right | \leq d_{Fa} (x, y).
\end{equation*}
By construction, we have that $\varphi_b \circ Ff = \Omega f \circ \varphi_a$, so the above is equivalent to
\begin{equation*}
\left | \nu_b ( \varphi_b ( Ff (x) ) ) - \nu_b ( \varphi_b ( Ff (y) ) ) \right | \leq d_{Fa} (x, y).
\end{equation*}
But we claim that
\begin{equation*}
\left | \nu_b ( \varphi_b ( Ff (x) ) ) - \nu_b ( \varphi_b ( Ff (y) ) ) \right | \leq d_{Fb} (Ff (x), Ff (y) ) = \delta,
\end{equation*}
which we now prove. Without loss of generality, we can assume $r_x = \nu_b ( \varphi_b ( Ff(x) ) ) \leq \nu_b ( \varphi_b ( Ff (y) ) ) = r_y$, so that the left hand side in the above is just $r_y - r_x$. By construction, $r_x = \inf \{ r \mid Ff(x) \in R(r)(b) \}$. Since $R$ is a predicate, we have that $Ff(x) \in R(r_x)(b)$. Again since $R$ is a predicate, we must therefore have that $Ff(y) \in R(r_x + \delta) (b)$, so that $r_y \leq r_x + \delta$ and therefore $r_y - r_x \leq \delta$. Since $\delta \leq d_{Fa}(x,y)$ we are done.

\end{proof}

\bibliography{contsem}
\bibliographystyle{plain}

\end{document}